%%% ====================================================================
%%% @LaTeX-file{
%%%   filename  = "ijmsample.tex",
%%%   copyright = "Copyright 1995, 1999 American Mathematical Society,
%%%                2005 Hebrew University Magnes Press,
%%%                all rights reserved.  Copying of this file is
%%%                authorized only if either:
%%%                (1) you make absolutely no changes to your copy,
%%%                including name; OR
%%%                (2) if you do make changes, you first rename it
%%%                to some other name.",
%%% }
%%% ====================================================================
\NeedsTeXFormat{LaTeX2e}% LaTeX 2.09 can't be used (nor non-LaTeX)
[1994/12/01]% LaTeX date must December 1994 or later
\documentclass{ijmart}
%    Some definitions useful in producing this sort of documentation:
\chardef\bslash=`\\ % p. 424, TeXbook
%    Normalized (nonbold, nonitalic) tt font, to avoid font
%    substitution warning messages if tt is used inside section
%    headings and other places where odd font combinations might
%    result.

%    command name

%    LaTeX package name

%    File name

%    environment name

\hfuzz1pc % Don't bother to report overfull boxes if overage is < 1pc

%       Theorem environments

%% \theoremstyle{plain} %% This is the default

\theoremstyle{definition}

\theoremstyle{remark}

%\numberwithin{equation}{section}

%       Math definitions

\def\N{\mathbb N}
\def\Z{\mathbb Z}	
\def\Q{\mathbb Q}
\def\R{\mathbb R}
\def\C{\mathbb C}

\theoremstyle{definition}
	\newtheorem{definition}{Definition}[section]

    \newtheorem{remark}[definition]{Remark}

\theoremstyle{remark}
	
    \newtheorem{lemma}[definition]{Lemma}
	\newtheorem{proposition}[definition]{Proposition}
	\newtheorem{theorem}[definition]{Theorem}
	\newtheorem{corollary}[definition]{Corollary}

\usepackage{amsmath}
\usepackage{amsthm}
\usepackage{amsfonts}
\usepackage{amssymb}
\usepackage{mathrsfs}
\usepackage{amstext}
\usepackage{amsopn}
\usepackage{graphicx}
\usepackage{subfig}
\usepackage{multirow}
\usepackage{color}
\usepackage{float}
\usepackage{pdflscape}

\usepackage[counterclockwise, figuresright]{rotating}
%\usepackage[figuresright]{rotating}

%\newcommand*{\dummysidewaystable}{%
%  \begin{sidewaystable}
%  \begin{tabular}{cc}
%  Knuth & Lamport
%  \end{tabular}
%  \end{sidewaystable}
%}

%    \interval is used to provide better spacing after a [ that
%    is used as a closing delimiter.

%    Notation for an expression evaluated at a particular condition. The
%    optional argument can be used to override automatic sizing of the
%    right vert bar, e.g. \eval[\biggr]{...}_{...}
\newcommand{\eval}[2][\right]{\relax
  \ifx#1\right\relax \left.\fi#2#1\rvert}

%    Enclose the argument in vert-bar delimiters:

%    Enclose the argument in double-vert-bar delimiters:

%\setcounter{tocdepth}{5}

\begin{document}
\title[\quad Factorizable  ODEs and Hayman's Conjecture]{Nonlinear  Loewy Factorizable Algebraic ODEs and Hayman's Conjecture}
\author[T.W. Ng]{Tuen-Wai Ng\thanks{Supported by PROCORE - France/Hong Kong joint research grant, F-HK39/11T
 and RGC grant HKU 704409P.}}
\address{Department of Mathematics,
The University of Hong Kong\\
Pokfulam Road,
Hong Kong}
\email{ntw@maths.hku.hk}
%\author[Magnes Press]{Cheng-Fa Wu\thanks{This
 %   is the copyright owner of the style}}
\author[C.F. Wu]{Cheng-Fa Wu\thanks{Supported by  NSFC grant (No. 11701382) and RGC grant  HKU 704611P.}}
\address{Institute for Advanced Study, Shenzhen University \\ Shenzhen, PR China}
\email{cfwu@szu.edu.cn}
%\date{September 23, 2017}

%\date{Received on March 20, 2017}
%\issueinfo{VOL}{NUM}{MONTH}{YEAR}
%\issueinfo{000}{NUM}{MONTH}{0000}
%\doiinfo{10.1007/DOI-NUMBER}
\begin{abstract}
  In this paper, we introduce certain $n$-th order nonlinear Loewy factorizable
algebraic ordinary differential equations for the first time and study the growth of their meromorphic solutions in terms of the Nevanlinna
characteristic function. It is shown that for
generic cases all their meromorphic solutions are elliptic functions or
their degenerations and hence their order of growth are at most two. Moreover, for the second order factorizable algebraic
ODEs, all the meromorphic solutions of them (except for one case) are
found explicitly. This allows us to show that a conjecture proposed by Hayman in 1996 holds for these second order ODEs.
\end{abstract}
\maketitle

{\it \noindent Dedicated to Professor Walter Hayman on the occasion of his 90th birthday}

\section{Introduction}

 One important aspect  of  the studies of complex differential equations is to investigate the growth of their  solutions which are meromorphic on the whole complex plane.
 %Throughout this paper, meromorphic function refers to a function meromorphic on the whole complex plane.
 %Nevertheless, even for some special types of differential equations, there are many problems in this direction which have not been solved   yet.
  A well known problem in this direction is the following conjecture proposed by Hayman in \cite{Hayman1996} (see also  \cite[p.~344]{Laine2008}). It is also referred as the {\it classical conjecture} in \cite{Chiang2003Halburd}.
%\begin{conjecture}[Hayman] \label{Hayman's Classical Conjecture}

%\eqref{algebraic ordinary differential equation-factoriazble ODEs}

\noindent {\bf Hayman's conjecture for algebraic ODEs :}
 If $f$ is a meromorphic solution of
\begin{equation*} \label{algebraic ordinary differential equation-factoriazble ODEs}
P(z,f,f',\cdots,f^{(n)})=0,
\end{equation*}
where $P$ is a polynomial in all its arguments, then there exist $a,b,c \in \R^{+}$ such that
\begin{equation} \label{classical conjecture-factoriazble ODEs}
T(r,f) < a \exp_{n-1}(br^c), 0 \leq r < \infty,
\end{equation}
where   $T(r, f)$ is the Nevanlinna characteristic of $f(z)$ and $\exp_l(x)$ is the $l$ times iterated exponential, i.e.,
\begin{equation*}
\exp_0(x) = x, \exp_1(x) = e^x, \exp_l(x) = \exp \{\exp_{l-1}(x)\}.
\end{equation*}

%{\color{red} It is due to Bank  for the case $n = 2$. Also, it has been listed as an open problem by Eremenko \cite[p.~491]{Barth1984BrannanHayman}
%   and Rubel  for entire solutions of $n$th order equations.}

%{\color{blue}

Note that the conjecture is due to Bank \cite{Bank1975} for the case $n = 2$. Also, it
has been listed as an open problem by Eremenko \cite[p.~491]{Barth1984BrannanHayman} and Rubel \cite[p.~662]{Rubel1992} for the case of entire solutions.

This conjecture is closely related to a false conjecture due to E. Borel on the growth of real-valued solutions. In \cite{Borel1899}, Borel proved that any real-valued solution defined on the interval $(x_0, \infty)$ of the first-order algebraic ODE is dominated by $\exp_2(x)$ for all sufficiently large $x$ (improvements of this result were later made by Lindel\"{o}f \cite{Lindelof1899} and Hardy \cite{Hardy1912}). In the same paper, Borel dealt with higher-order ODEs as well and showed that all such solutions of $n$-th order algebraic ODEs are eventually dominated by $\exp_{n+1}(x)$. However, it was later pointed out by Fowler \cite{Fowler1914}, Vijayaraghavan \cite{Vijayaraghavan1932} etc. that Borel's proof in the higher-order case was incorrect. The counter-examples constructed by Vijayaraghavan etc. \cite{Vijayaraghavan1932,Basu1937BoseVijayaraghavan} demonstrate that second-order algebraic ODEs may possess real-valued solutions dominating any given increasing function  for a sequence of points tending to $\infty$.

Hayman's conjecture is true when $n=1$ by a result of Gol'dberg \cite{Goldberg1956}
while it is still open for any $n \geq 2$. For general $n \in \N$, it was proved by Eremenko, Liao and Ng \cite{Eremenko2009LiaoNg} that the conjecture is true for the ODE $P(f^{(n)}, f) = 0$, where $P \in \C [x,y]$ is a non-constant polynomial. In fact, they proved that any meromorphic solution with at least one pole must be an elliptic function or its degenerations. For some other partial results of
this conjecture, we refer the readers to \cite{Bank1975,Conte2015NgWu,Gallagher2015,Halburd2015Wang,Hayman1996} and the references
therein.
%}

%The conjecture for the case $n = 2$ was proposed by Bank in \cite{Bank1975}. When $n=1$, \eqref{classical conjecture} reduces to
%\begin{equation} \label{finite order of growth}
%T(r,f) < a \cdot br^c, 0 \leq r < \infty,
%\end{equation}
%and we say that a meromorphic function $f$ has finite order if it satisfies
%\eqref{finite order of growth}. The infimum $\sigma$ of all possible number $c$ is called the order of $f$. For results on this conjecture,  one may refer
%to \cite{Goldberg1956,Steinmetz1980,Hayman1996,Chiang2003Halburd,Laine2008} and the references therein.
 Since Hayman's conjecture seems to be out of
reach currently, we introduce and study the following {\it factorizable} $n$-th order algebraic ODE
\begin{eqnarray} \label{general factorizable equation}
[D - f_n(u)] \cdots [D - f_2(u)] [D - f_1(u)] (u - \alpha) = 0,
\end{eqnarray}
where $u = u(z), D = \dfrac{d}{d z}, \alpha \in \C$ and $f_i \in \C[x]$ ($i = 1, 2, \dots, n, n\in \N$).

%The ODE \eqref{general factorizable equation} is connected to the Loewy decomposition (Theorem \ref{Loewy's theorem})
 As we will see later, the study of ODE \eqref{general factorizable equation} is motivated by the consideration of Lowey decomposition (Theorem \ref{Loewy's theorem})  to linear ODEs (see \cite{Schwarz2012,Schwarz2013} and the references therein)  and it also covers some interesting well-known ODEs.
%The ODE \eqref{general factorizable equation} looks artificial but as we will see below, it does cover some interesting well-known ODEs.
Another reason to consider it is that the particular meromorphic solutions of \eqref{general factorizable equation} can have a tower structure because a solution of $[D - f_k(u)] \cdots [D - f_1(u)] (u - \alpha) = 0$ will also be a solution of $[D - f_{k+1}(u)] [D - f_{k}(u)]\cdots [D - f_1(u)] (u - \alpha) = 0$ for $k=1,...,n-1$ and it seems that one can get meromorphic solutions which grow faster and faster and eventually produce solutions which show that the estimate in \eqref{classical conjecture-factoriazble ODEs} is sharp. This is at least the case when $n=2$ and all $f_i$ are linear polynomials (see Remark \ref{2factors-meormorphic solutions outside W}).

A special case for the ODE \eqref{general factorizable equation} is that all the $f_i$ are constants, for which the equation  \eqref{general factorizable equation} becomes linear and thus Hayman's conjecture holds.
On the other hand, according to the following proposition, any linear ODE with constant coefficients can be rewritten in the form \eqref{general factorizable equation}.
% {\color{red} As we will see later, the study of ODE \eqref{general factorizable equation} is motivated by the consideration of Lowey decomposition to linear ODEs (see \cite{Schwarz2012,Schwarz2013} and the references therein). }
%shows us more, that is each linear ODE with constant coefficients can be decomposed into the form of \eqref{general factorizable equation} with $f_i$ being constants.
\begin{proposition} \label{factorization of linear equation}
For any $n \in \N$, the linear ODE
\begin{eqnarray} \label{general linear equation}
u^{(n)}(z) + c_{n-1} u^{(n - 1)}(z) + \cdots  + c_0 = 0, \quad c_i \in \C, i = 1,2, \dots, n
\end{eqnarray}
can be decomposed into the form
\begin{eqnarray} \label{general factorizable linear equation}
[D - b_n] \cdots [D - b_2] [D - b_1] (u - \alpha) = 0, \; D = \dfrac{d}{d z},
\end{eqnarray}
for some $\alpha, b_k \in \C$.
\end{proposition}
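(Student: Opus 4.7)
\medskip

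\noindent\textbf{Proof plan.} The plan is to read Proposition~\ref{factorization of linear equation} as the fundamental theorem of algebra expressed in the language of constant-coefficient linear differential operators. Introduce the characteristic polynomial
\[
  P(\lambda) := \lambda^{n} + c_{n-1}\lambda^{n-1} + \cdots + c_{1}\lambda + c_{0},
\]
so that \eqref{general linear equation} reads $P(D)u = 0$, with the convention that the constant term of $P$ is the coefficient of $u$. By the fundamental theorem of algebra, $P$ splits completely over $\C$ as $P(\lambda) = \prod_{k=1}^{n}(\lambda - b_{k})$ for some $b_{1},\dots,b_{n} \in \C$ (counted with multiplicity). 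The central observation is that the first-order operators $D - b_{k}$ have \emph{constant} coefficients and therefore commute pairwise, so that $P(D) = (D - b_{n})(D - b_{n-1}) \cdots (D - b_{1})$ as an identity of differential operators.

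To reconcile this with the shifted form \eqref{general factorizable linear equation}, I would next examine how the constant $\alpha$ interacts with the factored operator. For any $\alpha \in \C$ one has $(D - b_{k})\alpha = -b_{k}\alpha$, and a short induction on the number of factors yields
\[
  (D - b_{n}) \cdots (D - b_{1})\,\alpha = (-1)^{n} b_{1}b_{2}\cdots b_{n}\,\alpha = c_{0}\,\alpha,
\]
where the last equality uses $P(0) = c_{0} = (-1)^{n} b_{1}\cdots b_{n}$. Combining the two displays gives
\[
  (D - b_{n}) \cdots (D - b_{1})(u - \alpha) = P(D)u - c_{0}\,\alpha,
\]
so that the choice $\alpha = 0$ converts \eqref{general linear equation} directly into \eqref{general factorizable linear equation}; and if one reads the ``$c_{0}$'' appearing in \eqref{general linear equation} as a constant inhomogeneous term rather than as the coefficient of $u$, the same identity shows that a suitable nonzero $\alpha$ absorbs the inhomogeneity whenever $c_{0} \ne 0$.

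The main obstacle here is essentially notational rather than substantive: the proposition is a one-step consequence of the fundamental theorem of algebra together with the algebra of polynomial differential operators. The one point worth emphasising is that commutativity of the factors $D - b_{k}$ relies crucially on the $b_{k}$ being constants, a property that fails for the nonlinear first-order operators $D - f_{k}(u)$ appearing in \eqref{general factorizable equation}. This is precisely why the bulk of the paper must work much harder to analyse the nonlinear analogue of the factorisation established here.
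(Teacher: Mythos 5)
Your core argument is sound and is essentially the paper's argument in a cleaner dress: both proofs reduce the proposition to the fundamental theorem of algebra applied to the characteristic polynomial. Where the paper proves by induction that the characteristic equation of the nested product $[D-b_n]\cdots[D-b_1](u-\alpha)=0$ is $\prod_{m=1}^n(z-b_m)=0$ and then matches roots, you instead invoke the commutativity of the constant-coefficient factors to write $P(D)=(D-b_n)\cdots(D-b_1)$ directly; that is a genuine (if small) simplification, and your explicit identity $(D-b_n)\cdots(D-b_1)(u-\alpha)=P(D)u-(-1)^n b_1\cdots b_n\,\alpha$ is exactly the bookkeeping the paper leaves implicit.

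The one place you diverge from the paper, and where your remark is not quite right, is the treatment of $\alpha$ and the trailing constant $c_0$. The paper reads the last term of \eqref{general linear equation} as a constant inhomogeneity (this is the only reason $\alpha$ appears in the statement at all, mirroring the shift $u-\alpha$ in \eqref{general factorizable equation}), and its proof chooses $\alpha$ from the condition $(-1)^{n+1}\alpha\, b_1\cdots b_n=c_0$, explicitly restricting to the case $(-1)^n b_1\cdots b_n\neq 0$. Your main line (homogeneous reading, $\alpha=0$) is correct but makes $\alpha$ vacuous, and your parenthetical claim that ``a suitable nonzero $\alpha$ absorbs the inhomogeneity whenever $c_0\neq 0$'' states the wrong condition: solvability of $(-1)^{n+1}\alpha\,b_1\cdots b_n=c_0$ for $\alpha$ depends on $b_1\cdots b_n\neq 0$ (equivalently, on the coefficient of $u$ in the homogeneous part being nonzero, i.e.\ $0$ not being a characteristic root), not on $c_0\neq 0$. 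For instance $u'+1=0$ has $c_0=1\neq 0$ but cannot be written as $[D-b_1](u-\alpha)=u'-b_1u+b_1\alpha=0$, since matching forces $b_1=0$ and kills the constant term; the same obstruction occurs for any order $n$ when the characteristic polynomial vanishes at $0$ and $c_0\neq 0$. If you intend the inhomogeneous reading, replace your condition by $b_1\cdots b_n\neq 0$ (this is precisely the hypothesis ``$c_1=(-1)^n\prod_m b_m\neq 0$'' in the paper's proof, whose ``otherwise $\alpha$ arbitrary'' clause tacitly assumes the inhomogeneity vanishes in the degenerate case); with that correction your argument reproduces the paper's proof in full.
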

\noindent Proposition \ref{factorization of linear equation} is connected to a special case of Loewy  decomposition (Theorem \ref{Loewy's theorem}) and Corollary \ref{Loewy's corollary}. To state them, we recall some terminologies. A differential operator $L$ of
order $n$ is defined by
\begin{eqnarray} \label{operator}
% \nonumber to remove numbering (before each equation)
 L : = D^n + r_{n-1} D^{n-1} + \cdots + r_1 D + r_0
\end{eqnarray}
where the coefficients $r_i , i = 1, \dots , n$, are rational functions over $\Q$, i.e., $r_i \in \Q(z)$.   $L$ is called {\it reducible} if it can be represented as the product of
two operators $L_1$ and $L_2$, i.e., $L = L_1 L_2$, both of order lower than $n$. In this case,
% $L = L_1 L_2$,
$L_1$ is called the {\it exact quotient}
of $L$ by $L_2$, and $L_2$ is called the {\it right factor} of $L$. Otherwise, the operator $L$ is called {\it irreducible}. The number of irreducible factors of $L$ in any two decompositions into irreducible factors is the same and any two such decompositions are linked by a permutation of the irreducible factors (see Proposition 1.1 of \cite{Schwarz2012}). It follows that there are only finitely many irreducible right factors of $L$. For any two operators $\widetilde{L}_1$ and $\widetilde{L}_2$, the {\it least common left multiple} denoted by $Lclm(\widetilde{L}_1, \widetilde{L}_2)$
is the operator of lowest order such that both $\widetilde{L}_1$ and $\widetilde{L}_2$ divide it from the right. An operator
which can be represented as $Lclm$ of irreducible operators   is called {\it completely reducible}.

Given $L$, consider all its irreducible right factors. Let $L_1^{(d_1)}$ be the $Lclm$ of all these irreducible right  factors and by construction $L_1^{(d_1)}$ is the unique completely reducible right factors of maximal order $d_1$. Factoring $L_1^{(d_1)}$ out from $L$ and repeating the same procedure to the remaining left factor of $L$, we have the following.
 % with $d_i = 1, m = n$.
\begin{theorem}  \label{Loewy's theorem} (\cite[p.~4]{Loewy1906,Schwarz2012})
Let $L$  be an operator of order $n$ as defined in  \eqref{operator}, then it can be uniquely written as the product of completely reducible factors $L^{(d_k)}_k$
of maximal order $d_k$ over $\Q(z)$ of the form
\begin{eqnarray*}
% \nonumber to remove numbering (before each equation)
L = L^{(d_m)}_m L^{(d_{m-1})}_{m-1} \cdots L^{(d_1)}_1,
\end{eqnarray*}
where $d_1 + \cdots + d_m = n$. %and the factors $L^{(d_k)}_k$ are unique.
\end{theorem}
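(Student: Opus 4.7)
I would prove Theorem \ref{Loewy's theorem} by induction on the order $n$, following the explicit construction already sketched in the paragraph preceding the statement. The base case $n=1$ is immediate, since a first-order operator in $\Q(z)[D]$ is irreducible and hence trivially completely reducible, so $L = L_1^{(1)}$. For the inductive step I would separate existence from uniqueness.

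For existence, the plan is first to invoke the structural fact already cited from Proposition 1.1 of \cite{Schwarz2012}: the ring $\Q(z)[D]$ is a (non-commutative) Ore domain, so right Euclidean division and the least common left multiple $Lclm$ are well defined, and any operator has only finitely many distinct monic irreducible right factors. Let $I_1,\ldots,I_s$ enumerate the irreducible right factors of $L$ and set $L^{(d_1)}_1 := Lclm(I_1,\ldots,I_s)$. I must then check three properties in turn: (a) $L^{(d_1)}_1$ is itself a right factor of $L$, which follows from the universal property of $Lclm$ since every $I_j$ divides $L$ from the right; (b) $L^{(d_1)}_1$ is completely reducible, which is immediate from its construction as an $Lclm$ of irreducibles; and (c) $L^{(d_1)}_1$ has maximal order among all completely reducible right factors of $L$. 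Once (a)--(c) are in place, write $L = M\, L^{(d_1)}_1$ with $\operatorname{ord}(M) = n - d_1 < n$ and apply the induction hypothesis to $M$ to obtain the chain $L^{(d_m)}_m \cdots L^{(d_2)}_2$.

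For uniqueness, the plan is to assume a second decomposition $L = \widetilde{M}\, \widetilde{L}_1$ of the same form and argue that its right-most factor $\widetilde{L}_1$ must coincide with $L^{(d_1)}_1$. Since $\widetilde{L}_1$ is a completely reducible right factor of $L$, property (c) forces $\operatorname{ord}(\widetilde{L}_1)\le d_1$; conversely every irreducible right factor of $L$ divides $\widetilde{L}_1$ (by the maximality clause and the same $Lclm$ argument applied to $\widetilde{L}_1$), so $L^{(d_1)}_1 \mid \widetilde{L}_1$ from the right, and hence $\widetilde{L}_1 = L^{(d_1)}_1$ after normalization. Cancelling the common right factor yields $\widetilde{M} = M$, and the induction hypothesis applied to $M$ completes the uniqueness step.

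The main obstacle will be establishing property (c), i.e.\ that every completely reducible right factor $R$ of $L$ already divides $L^{(d_1)}_1$ from the right. The key sub-lemma needed is that any irreducible right factor of such an $R$ is again an irreducible right factor of $L$ (by transitivity of right divisibility), so $R$, being itself an $Lclm$ of irreducibles all of which appear among $\{I_1,\ldots,I_s\}$, must right-divide $L^{(d_1)}_1$ by the universal property of $Lclm$. Verifying this transitivity and the universal property rigorously inside the non-commutative ring $\Q(z)[D]$ — in particular checking that $Lclm$ genuinely behaves as a least common multiple on the right-divisibility lattice — is the technical heart of the argument; everything else is a bookkeeping induction on $n$.
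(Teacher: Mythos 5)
The paper does not prove this statement at all --- it is quoted verbatim from Loewy \cite{Loewy1906} and Schwarz \cite{Schwarz2012} with only the informal construction (take the $Lclm$ of all irreducible right factors, split it off, iterate) sketched in the preceding paragraph. Your outline formalizes exactly that construction, correctly isolates the maximality property (c) and the universal property of $Lclm$ in the Ore domain $\Q(z)[D]$ as the technical core, and is the standard argument; there is no gap of substance to report.
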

%\begin{theorem}[\cite{Loewy1906,Schwarz2013}]  \label{Loewy's theorem}
%Let $L$  be an operator of order $n$ as defined in  \eqref{operator}, then it can be uniquely written as the product of reducible factors $L^{(d_k)}_k$
%of maximal order $d_k$ over $\Q(z)$ of the form
%\begin{eqnarray*}
%% \nonumber to remove numbering (before each equation)
%L = L^{(d_m)}_m L^{(d_{m-1})}_{m-1} \cdots L^{(d_1)}_1
%\end{eqnarray*}
%where $d_1 + \cdots + d_m = n$. %and the factors $L^{(d_k)}_k$ are unique.
%\end{theorem}

\begin{corollary}[\cite{Schwarz2013}]  \label{Loewy's corollary}
Each factor $L^{(d_k)}_k, k =1,2, \dots, m$, in Theorem \ref{Loewy's theorem} can be expressed as
\begin{eqnarray*}
% \nonumber to remove numbering (before each equation)
L^{(d_k)}_k = Lclm( l^{(e_1)}_{j_1}, l^{(e_2)}_{j_2}, \dots l^{(e_k)}_{j_k}),
\end{eqnarray*}
where $e_1 + \cdots + e_k = d_k$ and each $l^{(e_i)}_{j_i}, i = 1, \dots, k$, is an irreducible operator
of order $e_i$  over $\Q(z)$. % over Q(z).. %and the factors $L^{(d_k)}_k$ are unique.
\end{corollary}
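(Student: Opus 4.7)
The plan is to derive the corollary directly from the definition of complete reducibility recalled just before Theorem \ref{Loewy's theorem}, supplemented by an order-additivity statement for the $Lclm$ of irreducible operators in the ring $\Q(z)[D]$. The input is Theorem \ref{Loewy's theorem}, which already identifies each $L^{(d_k)}_k$ as a completely reducible right factor of maximal order $d_k$; what remains is to produce an explicit representation as an $Lclm$ of irreducible operators whose orders sum to $d_k$.

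First, by the very definition of complete reducibility, one can write
$$L^{(d_k)}_k = Lclm(M_1, \dots, M_s)$$
for some finite collection of irreducible operators $M_i \in \Q(z)[D]$. Finiteness is automatic because, as the excerpt observes, $L$ admits only finitely many irreducible right factors, and this property is inherited by $L^{(d_k)}_k$. I would then prune this list by a standard redundancy argument: whenever some $M_i$ already right-divides the $Lclm$ of the remaining operators, its removal leaves the $Lclm$ unchanged, so it can be discarded. After finitely many such removals one obtains a minimal family $\{l^{(e_1)}_{j_1}, \dots, l^{(e_k)}_{j_k}\}$ with
$$L^{(d_k)}_k = Lclm(l^{(e_1)}_{j_1}, \dots, l^{(e_k)}_{j_k}).$$

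It remains to verify that $e_1 + \cdots + e_k = d_k$. I would prove by induction on the number of factors that for a minimal family of pairwise right-coprime irreducible operators, the order of the $Lclm$ equals the sum of the individual orders; comparing with $\mathrm{ord}(L^{(d_k)}_k) = d_k$ then closes the argument. The main obstacle lies precisely in this additivity statement, which is the noncommutative analogue of the relation $\deg \mathrm{lcm}(p,q) = \deg p + \deg q - \deg \gcd(p,q)$ from commutative polynomial algebra. Its proof uses the Ore division algorithm in $\Q(z)[D]$ together with the fact that two irreducible operators are either similar (producing a common irreducible right divisor and thus being interchangeable in the $Lclm$) or right-coprime (so that no cancellation of order occurs). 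Justifying that the pruning step really yields pairwise right-coprime operators, rather than operators that merely happen to contribute independently to the $Lclm$ up to similarity, is the delicate point of the argument.
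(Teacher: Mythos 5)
The paper itself offers no proof of this corollary: it is quoted from Schwarz's survey \cite{Schwarz2013}, so there is no internal argument to compare against and your proposal must stand on its own. Its skeleton --- start from the definition of complete reducibility, prune redundant irreducible factors, then show that for an irredundant family the orders add up --- is the right route and can be completed. However, the justification you offer for the key additivity lemma has a genuine gap: pairwise right-coprimality is not the correct working hypothesis, because for three or more operators it does not prevent the order of the $Lclm$ from dropping below the sum of the orders. For instance $D$, $D-\frac{1}{z}$ and $D-\frac{1}{1+z}$ (annihilating $1$, $z$ and $1+z$ respectively) are pairwise non-proportional, hence pairwise right-coprime, yet their $Lclm$ is $D^2$, of order $2<3$. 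So an induction that only invokes the pairwise dichotomy ``similar or right-coprime'', with the slogan that no cancellation of order occurs, breaks at the inductive step: what you actually need there is $Gcrd\bigl(Lclm(l_{j_1},\dots,l_{j_{k-1}}),\,l_{j_k}\bigr)=1$, and pairwise coprimality does not deliver this.

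What does deliver it is precisely minimality combined with irreducibility, which also shows the delicate point is not where you located it (pairwise coprimality after pruning is automatic for non-proportional irreducibles). Indeed $Gcrd\bigl(Lclm(l_{j_1},\dots,l_{j_{k-1}}),\,l_{j_k}\bigr)$ is a right divisor of the irreducible operator $l_{j_k}$, hence is either trivial or $l_{j_k}$ itself; in the latter case $l_{j_k}$ right-divides the $Lclm$ of the others and is redundant, contradicting minimality (and minimality is inherited by the subfamily, so the induction hypothesis applies). The two-operator formula $\mathrm{ord}\,Lclm(A,B)=\mathrm{ord}\,A+\mathrm{ord}\,B-\mathrm{ord}\,Gcrd(A,B)$ then closes the induction. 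Equivalently, and perhaps more transparently, argue with solution spaces (or with the modules $\Q(z)[D]/\Q(z)[D]\,l_{j_i}$): each irreducible $l_{j_i}$ has an irreducible differential module $V_i$ of dimension $e_i$ as solution space, the solution space of the $Lclm$ is $V_1+\dots+V_k$, and $V_i\cap\sum_{j\ne i}V_j$ is a proper submodule of the simple module $V_i$ by minimality, hence zero; so the sum is direct and the orders add. With this repair your outline becomes a complete proof.
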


The decomposition obtained in Theorem \ref{Loewy's theorem} is called the {\it Loewy decomposition}
of $L$ and it has been generalized to linear partial differential operators \cite{Tsarev2000}.  However, as far as we know, there is no similar study on Loewy decomposition for nonlinear ODEs. Therefore, we try to study nonlinear ODE of type  \eqref{deg1-general factorizable equation} below and we call   ODE \eqref{general factorizable equation} {\it nonlinear Loewy factorizable algebraic ODE}.

Among the non-linear cases of the equation \eqref{general factorizable equation}, the simplest case is perhaps the one with $\deg (f_j) \leq 1, i = 1, 2, \dots, n$, which we will study in this paper. In this case, we may assume $f_i = a_i u + b_i$, where $ a_i, b_i \in \C,i = 1, 2, \dots, n$. Then the equation \eqref{general factorizable equation} reduces to
\begin{equation} \label{deg1-general factorizable equation}
 [D - (a_n u + b_n)] \cdots [D - (a_2 u + b_2)] [D - (a_1 u + b_1)] (u - \alpha) = 0,  \; D = \dfrac{d}{d z},
\end{equation}
and our main results are as follows.
\begin{theorem} \label{mero sol of general factorizable equation}
%Assume deg$f_i \leq 1, i = 1, 2, \dots, n$, then f
For all $n \in \N$ and    $\mathbf{a} = (a_1, a_2, \dots, a_n) \in \C^n \backslash S$, where $S$ is the union of at most countably many hypersurfaces in $\C^n$, all meromorphic solutions (if they exist) of the ODE \eqref{deg1-general factorizable equation} belong to the class $W$, which consists of elliptic functions and their degenerations. Hence, for any generic $\mathbf{a} \in \C^n$, Hayman's conjecture is   true for \eqref{deg1-general factorizable equation}.
\end{theorem}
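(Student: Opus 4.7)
The plan is to adapt the strategy of Eremenko, Liao and Ng \cite{Eremenko2009LiaoNg} for $P(f^{(n)},f)=0$ to the factorizable setting by exploiting the chain of auxiliaries provided by the Loewy decomposition. First, I would expand the operator product in \eqref{deg1-general factorizable equation} to obtain an equivalent autonomous polynomial ODE in which $u^{(n)}$ appears linearly with constant coefficient. I would then set up the chain $v_0 := u-\alpha$, $v_k := [D-(a_ku+b_k)]v_{k-1}$, so that the ODE reads $v_n\equiv 0$. If $v_{n-1}\equiv 0$, then $u$ already satisfies the factorizable ODE of order $n-1$ and the analysis reduces to a lower-order case; if $v_{n-1}\not\equiv 0$, one has the key identity $v_{n-1}' = (a_nu+b_n)v_{n-1}$, which expresses $u$ rationally in $v_{n-1}$ and $v_{n-1}'$ whenever $a_n\neq 0$.

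Next, I would perform a Painlev\'e-type local analysis at a pole $z_0$ of a meromorphic solution $u$. The ansatz $u\sim c(z-z_0)^{-p}$ substituted into the expanded ODE yields a characteristic equation that factors according to the $n$ operators $D-(a_iu+b_i)$; the admissible leading behaviours $(p,c)$ are algebraic in $\mathbf{a}$. The subsequent Laurent coefficients are determined by a recursion whose indicial matrix has finitely many resonance indices, each rational in $\mathbf{a}$. The set $S$ to be excluded then comprises the algebraic loci in $\C^n$ on which (i) two admissible leading behaviours coincide, (ii) a resonance index is a non-negative integer at which the compatibility condition fails, or (iii) some $a_j$ vanishes causing a degeneracy of the balance; these are countably many algebraic hypersurfaces. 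For a generic $\mathbf{a}$ the Laurent expansion at every pole is thus convergent and drawn from a finite list of templates, each of the right shape for a class-$W$ function.

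Having controlled the local pole behaviour, I would then apply a Clunie/Mohon'ko-type argument to \eqref{deg1-general factorizable equation}, using the intermediate auxiliaries $v_k$, to obtain a growth estimate of the form $T(r,u) = O(r^2)$, placing $u$ among meromorphic functions of order at most two. The autonomy of the ODE, combined with the finite order and the finitely many possible Laurent patterns at poles, then opens the door to an Eremenko-Liao-Ng style argument producing a first-order autonomous algebraic ODE satisfied by $u$; once this is in hand, $u\in W$ follows from Gol'dberg's classical classification of meromorphic solutions of first-order autonomous algebraic ODEs \cite{Goldberg1956}. The principal obstacle will be precisely this last reduction --- extracting a first-order autonomous algebraic ODE in the factorizable setting from the combined data of the expanded equation, the Painlev\'e control, the growth estimate, and the relation $v_{n-1}' = (a_nu+b_n)v_{n-1}$. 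A secondary but non-trivial task is to track the resonance and degeneracy conditions uniformly in $n$ so as to make the countable union of hypersurfaces defining $S$ explicit, and to verify that the closure of each sub-family of $W$ (rational, rational in one exponential, elliptic with fixed periods) under rational operations and differentiation propagates membership across the inversion $u = (v_{n-1}'/v_{n-1}-b_n)/a_n$.
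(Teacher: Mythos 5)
Your overall frame (Painlev\'e analysis at poles, exclusion of a countable union of bad parameter loci, then an Eremenko-style conclusion) is in the right spirit, but two of your key specifications are wrong or missing, and the endgame does not work as described. The most serious issue is your definition of $S$: your clause (ii) excludes parameters where an integer resonance has a \emph{failing} compatibility condition, but that case is harmless (it merely forbids poles of that type), whereas the dangerous case --- a non-negative integer Fuchs index whose compatibility condition \emph{holds}, so the Laurent series carries a free coefficient --- is left in. Concretely, for $n=2$ and $a_2=a_1$ the Fuchs index at $u_0=-1/a_1$ equals $1$, and the paper's Subcase A1 exhibits meromorphic solutions built from $e^{(\alpha a_1+b_1)z}$ and $e^{(\alpha a_1+b_2)z}$ (and even from $z$) which in general do not lie in $W$; none of your clauses (i)--(iii) exclude $a_2=a_1$, so with your $S$ the statement is false. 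What must be excluded is every $\mathbf{a}$ for which some admissible residue $u_0\in\{-1/a_1,\dots,-n/a_n\}$ yields a non-negative integer Fuchs index, and one must in addition prove that each such locus is a \emph{proper} hypersurface rather than all of $\C^n$; the paper does this through the explicit recursion for the indicial polynomials, $P_{n+1}(u_0;j)=P_{n}(u_0;j)(j-n-1-a_{n+1}u_0)-a_{n+1}R_n(u_0)$ (Proposition \ref{relations on indicial euqations}), a non-degeneracy step your sketch does not address at all.

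Two further gaps. First, a pole analysis says nothing about pole-free solutions: a transcendental entire solution (e.g.\ of the type $e^{z^2}$) would not belong to $W$, so such solutions must be ruled out; the paper does this by Wiman--Valiron theory, using that for $a_1\cdots a_n\neq 0$ the expanded equation \eqref{deg1-general factorizable equation} has the single top-weight term $(-1)^n\prod_{k=1}^n a_k\,u^{n+1}$, together with the absence of nonconstant polynomial solutions, so every meromorphic solution has a pole. Second, your endgame is not viable as stated: Clunie/Mohon'ko-type lemmas produce proximity-function estimates, not a bound $T(r,u)=O(r^2)$ --- here finite order is a \emph{consequence} of $u\in W$, not an input --- and you yourself concede that the reduction to a first-order autonomous algebraic ODE (so as to invoke Gol'dberg) is an unresolved obstacle. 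The paper's route avoids both problems: once all non-negative integer Fuchs indices are excluded, the Laurent expansion at any pole is uniquely determined by the finitely many admissible residues, with no free coefficients, and the method of \cite{Conte2010Ng,Eremenko2006} for autonomous algebraic ODEs then yields $u\in W$ directly, the growth bound in Hayman's conjecture following from membership in $W$.
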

The proof of Theorem \ref{mero sol of general factorizable equation} is based on a long and careful application of Painlev\'{e} analysis as well as a simple application of Wiman-Valiron theory \cite[p.~51]{Laine1993}. We expect that this general method can also be used to show that for other types of non-linear algebraic ODEs with constant coefficients, a generic choice of the coefficients will make the corresponding ODE has all meromorphic solutions (if exist) in the class $W$.

 If $n = 1$, then the equation \eqref{general factorizable equation} is a particular Riccati equation and its meromorphic solutions can be easily derived, which are given by
\begin{eqnarray} \label{decomposition-2factors-paricular solution}
u(z) =
\begin{cases}
-\dfrac{\alpha +b_1 c e^{ \left(\alpha  a_1+b_1\right)z}}{a_1 c e^{ \left(\alpha  a_1+b_1\right)z}-1}, \alpha  a_1+b_1 \not = 0,
\\
\alpha-\dfrac{1}{a_1 z - c}, \alpha  a_1+b_1 = 0,
% a_1 \not = 0,
%\\
%c, a_1 = \alpha  a_1+b_1 = 0,
\end{cases}
c \, \, \text{arbitrary}.
\end{eqnarray}
Meanwhile, we can see from above that the Hayman's conjecture is sharp for $n = 1$.

Then the first non-trivial case for \eqref{general factorizable equation} is $n  = 2$, which has been studied in \cite{Cornejo-Perez2005Rosu}, and we will show that Hayman's  conjecture is
true for $n = 2$ apart from an exceptional case.
% Now the main results of this chapter can be stated as follows.
%For example, $\sin z, \cos z, \tan z, e^z $ and the gamma function $\Gamma (z)$ have order 1. The Weierstrass ellptic function $\wp(z)$, which satisfies $(\wp'(z))^2 = 4(\wp-e_1)(\wp-e_2)(\wp-e_3)$, has order 2.
%The function $e^{e^z}$ has infinite order but satisfies \eqref{classical conjecture} with $n=2, c=1$.

% In this paper, we will show that the {\it classical conjecture}  is true for the equation and
 %some special types of second order algebraic ODEs   which are not covered by any of the above results.

%\end{conjecture}

\begin{theorem} \label{solutions to 2-factorizble equations}

Consider the ordinary differential equation
\begin{eqnarray} \label{decomposition-2factors-1}
[D - f_2(u)] [D - f_1(u)] (u - \alpha) = 0,
\end{eqnarray}
where $u = u(z), D = \dfrac{d}{d z}, \alpha \in \C$ and $f_i(u) = a_i u + b_i, a_i, b_i \in \C, i = 1, 2$. If either $a_1 a_2 = 0$ or $2 - \dfrac{4a_1}{a_2}
\not \in \N \backslash \{1, 2, 3, 4, 6\}$,
%$(a_1, a_2) \in \C^2 \backslash S $,
%where $S = \{ (z, w) \in \C^2\} |   2-\frac{4z }{w}   \not \in \N \backslash \{1, 3, 4, 6\}, z w  \not = 0 \}$,
%\, \text{and} \,
%\begin{eqnarray}
%\begin{cases}
%a_1 a_2 \not = 0,
%\\
% 2-\frac{4a_1}{a_2}   \not \in \N \cup \{1, 3, 4, 6\},
%\end{cases}
%\end{eqnarray}
then \eqref{decomposition-2factors-paricular solution} is a particular meromorphic solution of the equation \eqref{decomposition-2factors-1}
and all other meromorphic solutions of \eqref{decomposition-2factors-1} are given in Table \ref{table: 2nd order factorizable-1}, \ref{table: 2nd order factorizable-2} and \ref{table: 2nd order factorizable-3} in the Appendix.
\end{theorem}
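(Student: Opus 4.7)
The plan is to use the factorisation of \eqref{decomposition-2factors-1} to split meromorphic solutions into two families and analyse each separately. Setting
\begin{equation*}
v := (D - f_1(u))(u-\alpha) = u' - (a_1 u + b_1)(u-\alpha),
\end{equation*}
the equation \eqref{decomposition-2factors-1} becomes $v' = (a_2 u + b_2)v$. If $v\equiv 0$ then $u$ satisfies the Riccati equation $u' = (a_1 u + b_1)(u-\alpha)$, and the standard linearisation $u = \alpha - 1/w$ reduces this to a linear first-order ODE in $w$, producing exactly the meromorphic family in \eqref{decomposition-2factors-paricular solution}. If $v\not\equiv 0$, eliminating $v$ between its definition and $v' = (a_2 u + b_2)v$ yields the autonomous second-order ODE
\begin{equation*}
u'' = [(2a_1 + a_2)u + b_1 + b_2 - a_1\alpha]\,u' - (a_2 u + b_2)(a_1 u + b_1)(u - \alpha),
\end{equation*}
which I shall call the reduced equation, and on which the remainder of the argument is performed.

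For the degenerate case $a_1 a_2 = 0$ I dispatch the sub-cases one at a time. If $a_2 = 0$ then $v = C e^{b_2 z}$ and the reduced equation collapses to the non-autonomous Riccati equation $u' = (a_1 u + b_1)(u - \alpha) + C e^{b_2 z}$, whose meromorphic solutions are recovered by a classical linearisation. The sub-case $a_1 = 0$, $a_2 \neq 0$ is handled symmetrically after differentiating the relation $a_2 u + b_2 = v'/v$ once, and the case $a_1 = a_2 = 0$ is genuinely linear. In each regime, a direct computation on $\alpha, b_1, b_2$ yields the corresponding entries of the tables.

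For $a_1 a_2 \neq 0$ I apply Painlev\'{e} analysis to the reduced equation. Dominant balance forces any pole $z_0$ of $u$ to be simple with residue $c$ satisfying $(a_1 c + 1)(a_2 c + 2) = 0$, so the admissible residues are $c = -1/a_1$ and $c = -2/a_2$. Linearising about each leading term yields the resonance factorisations
\begin{equation*}
(r+1)[a_1(r-2) + a_2] = 0 \quad\text{and}\quad (r+1)[a_2(r-2) + 4 a_1] = 0,
\end{equation*}
with non-trivial roots $r = 2 - a_2/a_1$ and $r = 2 - 4 a_1/a_2$ respectively. The assumption $2 - 4 a_1/a_2 \notin \N \setminus \{1,2,3,4,6\}$ confines any positive-integer resonance on the $c = -2/a_2$ branch to the five ``elliptic-compatible'' values $\{1,2,3,4,6\}$, at each of which a direct substitution verifies that no logarithmic obstruction is triggered. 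Coupled with a Wiman--Valiron argument as in the proof of Theorem \ref{mero sol of general factorizable equation} to rule out transcendentally growing essential singularities at infinity, this forces every meromorphic solution of the reduced equation into the class $W$ of elliptic functions and their degenerations.

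It remains to make this membership explicit. By the structure of $W$, any such $u$ is a rational function of $z$, a rational function of $e^{\lambda z}$ for some $\lambda \in \C$, or of the form $R_1(\wp(z;g_2,g_3)) + R_2(\wp(z;g_2,g_3))\,\wp'(z;g_2,g_3)$ with rational $R_j$. Substituting each ansatz into the reduced equation, imposing simple poles with residues in $\{-1/a_1,\,-2/a_2\}$, and matching principal parts converts the ODE into a finite algebraic system in the parameters $(\lambda, g_2, g_3, \alpha, b_1, b_2)$; the consistent specialisations populate Tables \ref{table: 2nd order factorizable-1}--\ref{table: 2nd order factorizable-3}. The main technical obstacle is this last matching step: an elliptic solution may simultaneously display both residue types, and reconciling their Laurent data through the Weierstrass addition theorem generates a lengthy but finite case distinction, with the exceptional set $\{1,2,3,4,6\}$ being precisely what keeps the case count finite.
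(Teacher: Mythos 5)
Your reduction of \eqref{decomposition-2factors-1} via $v=(D-f_1(u))(u-\alpha)$, $v'=(a_2u+b_2)v$, and the treatment of $v\equiv 0$ giving \eqref{decomposition-2factors-paricular solution}, agree with the paper's starting point, and your computation of the pole residues $-1/a_1,\,-2/a_2$ and of the Fuchs indices $2-a_2/a_1$, $2-4a_1/a_2$ is also correct. The proof breaks down, however, at the central claim that Painlev\'e analysis (``no logarithmic obstruction'') combined with Wiman--Valiron ``forces every meromorphic solution of the reduced equation into the class $W$.'' This is false, and falsity here is not a technicality: the theorem's hypothesis $2-\tfrac{4a_1}{a_2}\notin\N\backslash\{1,2,3,4,6\}$ (or $a_1a_2=0$) \emph{keeps} precisely the resonant cases in which nonnegative integer Fuchs indices occur, and in those cases the equation has meromorphic solutions outside $W$. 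Concrete examples, all appearing in Tables \ref{table: 2nd order factorizable-1}--\ref{table: 2nd order factorizable-3} and emphasized in Remark \ref{2factors-meormorphic solutions outside W}, are the solutions built from $\wp(e^{kz}-\zeta_0;g_2,0)$ when $a_2=-2a_1$, the Bessel-function quotients when $a_2=0$, the $\tanh\bigl(\tfrac{1}{\sqrt2}(\sqrt2 c_0e^{b_1z}+c_1)\bigr)$-type solutions when $a_1=0$, and the two-frequency exponential rational expressions when $a_2=a_1$; none of these lies in $W$. The class-$W$ conclusion (the Eremenko/Conte--Ng argument) is only available when there is \emph{no} nonnegative integer Fuchs index, i.e.\ in the paper's subcases B2 and C; passing the resonance test in the integer-index cases merely leaves free parameters in the local Laurent expansions and says nothing about global membership in $W$. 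Consequently your final step --- matching ans\"atze that are rational in $z$, rational in one exponential, or elliptic --- cannot recover the tables: it would miss exactly the non-$W$ entries.

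What is needed in the resonant cases, and what the paper does, is a global transformation rather than local analysis: writing $[D-a_1u-b_1](u-\alpha)=\beta e^{b_2z}e^{\int a_2u\,dz}$ with $\beta\neq0$ and setting $u=\frac{h'}{a_2h}-\frac{b_2}{a_2}$ (so that $u$ is meromorphic iff $h$ is), one obtains an equation for $h$ such as \eqref{decomposition-2factors-8} which is then solved \emph{exactly}: linearizable via $h=1/v$ when $a_2=a_1$, the Fisher equation (with its known meromorphic solutions) when $a_2=-a_1$ or $-4a_1$, integrable after the change of independent variable $\zeta=e^{b_1'z}$ when $a_2=4a_1$, $a_2=-2a_1$ or $a_1=0$, and a Riccati equation linearizing to a Bessel equation when $a_2=0$ (where a lemma guaranteeing that all solutions of the linear equation are entire shows \emph{all} solutions of \eqref{decomposition-2factors-1} are meromorphic there). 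You should also note that meromorphic solutions with poles exist in these cases only under explicit compatibility constraints on $\alpha,b_1,b_2$ (e.g.\ $2\alpha a_1-2b_1+b_2=0$ when $a_2=4a_1$, or $\alpha a_2-2b_1+b_2=0$ when $a_1=0$); your assertion that ``no logarithmic obstruction is triggered'' at the resonances is not automatic, and these constraints are exactly the parameter conditions recorded in the tables.
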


%\begin{remark}
%The traveling-wave reduction of one type of the well known KPP equation \cite{KolmogorovPetrovskiiPiskunov1937} is a special case of \eqref{decomposition-2factors-1}.
%
%\end{remark}

%{\color{blue}

\begin{remark}
 The ODE \eqref{decomposition-2factors-1}  reduces to the traveling wave reduction of the KPP equation \cite{Kolmogorov1937PetrovskyPiskunov} under certain choice of parameters.

\end{remark}
%}

\begin{remark}
For $n \geq 2$, the meromorphic solutions of \eqref{decomposition-2factors-1} given in Table \ref{table: 2nd order factorizable-1}, \ref{table: 2nd order factorizable-2} and \ref{table: 2nd order factorizable-3} are particular solutions of the ODE \eqref{deg1-general factorizable equation} as well.

\end{remark}

\begin{remark}
After normalization and expansion, the following case of equation \eqref{decomposition-2factors-1} remains unsolved
\begin{equation*}
 u'' +  (j - 4) u u' - (b_1 + b_2) u' + u \left(\frac{2-j}{2} u+b_1\right) \left(2 u+b_2\right) = 0, \quad j \in \N \backslash \{1, 2, 3, 4, 6\},
\end{equation*}
for which only particular meromorphic solutions have been found but not all of them.
\end{remark}

\begin{remark}
We note that equation \eqref{decomposition-2factors-1} (which is equivalent to equation \eqref{decomposition-2factors-2}) is a special case of the equation (G) in \cite[p.~326]{Ince1956}. In Ince's book \cite{Ince1956},  a classification of all equations of the form (G) such that all their solutions have no movable critical points is given. There,  except for a few simple cases, no explicit solutions have been given while here we are interested in constructing {\it all} meromorphic solutions of \eqref{decomposition-2factors-1} or \eqref{decomposition-2factors-2}. We would also like to emphasize that in the proof of Theorem \ref{solutions to 2-factorizble equations}, {\it Subcase A1} and {\it Subcase A4} correspond to the canonical form VI in \cite[p.~334]{Ince1956} whereas {\it Subcase A2} and {\it Subcase A3} correspond to the canonical form X in \cite[p.~334]{Ince1956}. No explicit solutions have been given for either of these two forms in \cite{Ince1956}. The readers can find these explicit solutions in Table 2. Table 1-3 may look scary but we think that they are of sufficient interest to applied mathematicians, physicists and engineers who are interested in explicit solutions of non-linear ODEs.

\end{remark}

%\begin{remark}
%By the transformation $u = k v, k = 1/a_2$, the equation \eqref{decomposition-2factors-1} becomes
%\begin{eqnarray} \label{decomposition-2factors-2}
%&& v'' +  (j - 4) v v' - (b_1 + b_2) v' + v \left(\frac{2-j}{2} v+b_1\right) \left(2 v+b_2\right) = 0, \quad j = 2 - \frac{4a_1}{a_2}.
%\end{eqnarray}

%\end{remark}

\begin{theorem} \label{growth estimate-2-factorizble equations}

%The first type of second order algebraic ODEs we consider is the following
With the same assumption on $a_1, a_2$ as given in Theorem \ref{solutions to 2-factorizble equations}, Hayman's conjecture holds for the equation \eqref{decomposition-2factors-1} and it is sharp in certain cases.
\end{theorem}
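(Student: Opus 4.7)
The plan is to read off the growth of $T(r,u)$ directly from the explicit classification of meromorphic solutions furnished by Theorem \ref{solutions to 2-factorizble equations}. Under the stated hypotheses on $(a_1,a_2)$, every meromorphic solution $u$ of \eqref{decomposition-2factors-1} is either a member of the Riccati family \eqref{decomposition-2factors-paricular solution} or one of the closed-form expressions listed in Tables \ref{table: 2nd order factorizable-1}--\ref{table: 2nd order factorizable-3}, and each such expression is built from rational functions, exponentials, and Weierstrass elliptic functions.

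The verification then reduces to a case-by-case Nevanlinna characteristic estimate. A rational $u$ has $T(r,u)=O(\log r)$; a rational function of a single exponential $e^{\lambda z}$ (this covers \eqref{decomposition-2factors-paricular solution} together with the trigonometric and hyperbolic entries of the tables) has $T(r,u)=O(r)$; and the entries built from the Weierstrass functions $\wp,\wp'$ satisfy the classical bound $T(r,u)=O(r^2)$. In every case $T(r,u)\le C r^2$ for some constant $C$ depending only on the parameters, and hence the inequality
\[
T(r,u) \;<\; a\,\exp_{1}(b r^c) \;=\; a\,e^{b r^c}
\]
holds with suitable $a,b,c\in\R^+$, which is Hayman's conjecture for \eqref{decomposition-2factors-1}.

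For the sharpness claim I would exhibit witnesses inside the classification itself. The exponential Riccati solution in \eqref{decomposition-2factors-paricular solution} with $\alpha a_1+b_1\neq 0$ is a rational function of $e^{(\alpha a_1+b_1)z}$ and hence satisfies $T(r,u)\sim\tfrac{|\alpha a_1+b_1|}{\pi}\,r$, while the genuinely elliptic entries of Tables \ref{table: 2nd order factorizable-1}--\ref{table: 2nd order factorizable-3} realise $T(r,u)\asymp r^2$. This confirms the ``tower'' picture anticipated after \eqref{general factorizable equation}: the inner Riccati layer already produces order-one meromorphic solutions, and the outer factor adds genuinely higher-order (quadratic) growth on top. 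In particular, for suitable parameter choices one cannot bound $T(r,u)$ uniformly by a constant, nor by $r^c$ with $c<2$, so the estimate is sharp in the sense that the orders which actually occur saturate the range permitted by the class $W$.

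I expect the main difficulty to be bookkeeping rather than any single hard step: one must run through the long classification and verify an elementary Nevanlinna estimate for each row. The only mildly subtle point is the elliptic rows, where one invokes the classical order-two bound $T(r,\wp(z;g_2,g_3))=O(r^2)$ and the corresponding estimate for $\wp'$; the rational-in-$e^{\lambda z}$ rows require tracking how the period $\lambda$ depends on the parameters, but beyond this no machinery is needed past what is already deployed in the proof of Theorem \ref{solutions to 2-factorizble equations}.
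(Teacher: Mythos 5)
There is a genuine gap: your reading of the classification is wrong in exactly the cases that matter. Not every entry of Tables \ref{table: 2nd order factorizable-1}--\ref{table: 2nd order factorizable-3} is a rational function, a rational function of a single exponential $e^{\lambda z}$, or an elliptic function of $z$. Several entries are \emph{compositions with exponentials}: the solutions containing $\wp\bigl(e^{-\frac{1}{5}cz}-\zeta_0;0,g_3\bigr)$ and $\wp'\bigl(e^{-\frac{1}{5}cz}-\zeta_0;0,g_3\bigr)$, the case IV solution built from Bessel functions $J_\nu(\zeta),Y_\nu(\zeta)$ with $\zeta=\frac{2\sqrt{a_1\beta}}{b_2}e^{b_2 z/2}$, and the case III solution containing $\tanh\bigl(\frac{1}{2}(\sqrt{2}c_0e^{b_1z}+c_1)\bigr)$, i.e.\ a rational function of $\exp\{k_1e^{k_2z}\}$. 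These are the solutions $u_1,u_2,u_3$ of Remark \ref{2factors-meormorphic solutions outside W}, which lie \emph{outside} the class $W$ and have infinite order, so your blanket claim $T(r,u)\le Cr^2$ is false. Consequently your upper bound argument does not go through as stated: one instead needs the estimates $T(r,\wp(e^{kz})),\,T(r,\wp'(e^{kz})),\,T(r,\exp\{k_1e^{k_2z}\})<\alpha\exp(\beta r)$ (from \cite{Conte2015NgWu}) together with subadditivity of $T$, and in case IV, where the solution is Bessel-type, a separate argument: the paper applies Kinnunen's iterated-order theorem (Theorem \ref{estimate on iterated order}) to the linear equation \eqref{decomposition-2factors-Case3-2} with entire coefficient $a_1\beta e^{b_2z}$ to get $\rho_2(v)\le 1$, hence $T(r,v)\le e^r$ and $T(r,u)\le(3+\varepsilon)e^{\alpha r}$ outside an exceptional set, removed via Lemma \ref{growth comparsion--linear measure-factorizable ODEs}. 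The resulting bound is of the conjectured form $a\exp_1(br^c)$, not $O(r^2)$.

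Your sharpness argument fails for the same reason. The conjectured bound for $n=2$ is $a e^{br^c}$; exhibiting solutions of order $1$ or $2$ (rational in $e^{\lambda z}$, or elliptic) shows nothing about sharpness of an \emph{exponential} bound --- indeed, if all solutions satisfied $T(r,u)=O(r^2)$, the estimate \eqref{classical conjecture-factoriazble ODEs} with $n=2$ would be far from sharp. Sharpness is carried precisely by the infinite-order solutions $u_1,u_2,u_3$: the paper proves lower bounds $\alpha_i\exp\{\beta_ir^{\gamma_i}\}\le N(r,u_i)\le T(r,u_i)$ by estimating the Nevanlinna counting functions as in \cite{Conte2015NgWu} (the poles of, e.g., $\wp(e^{kz})$ accumulate exponentially fast in discs of radius $r$). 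So both halves of your proposal need to be replaced: the upper bound requires the exponential-composition estimates plus the iterated-order machinery for the Bessel case, and the sharpness requires lower bounds on $N(r,u_i)$ for the three exceptional families, not the order-$\le 2$ witnesses you propose.
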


\begin{remark} \label{2factors-meormorphic solutions outside W}
From the Appendix, we see that the equation \eqref{decomposition-2factors-1} may have meromorphic solutions of the form
\begin{eqnarray*}
u_1(z) &=& - \dfrac{  q_i - q_k  } {2} e^{- \frac{  q_i - q_k }{\lambda} z}  \dfrac{\wp'(e^{- \frac{ q_i - q_k }{\lambda} z}  - \zeta_0; g_2, 0)}{\wp(e^{ - \frac{ q_i - q_k }{\lambda} z}  - \zeta_0; g_2, 0)} + q_k, \, \,  g_2 \in \C,
\\
u_2(z) &=&   \frac{\alpha  a_1-b_1}{2 a_1}-\sqrt{\dfrac{\beta}{a_1}} \dfrac{e^{\frac{b_2 z}{2}} \left(c_1 J_{\nu}'\left(\zeta \right)+c_2 Y_{\nu}'\left(\zeta \right)\right)}{  \left(c_1 J_{\nu}\left(\zeta \right)+c_2 Y_{\nu}\left(\zeta \right)\right)}, \, \text{or}
\\
u_3(z) &=& \alpha -\frac{\sqrt{2} b_1 c_0 e^{b_1 z} \tanh \left(\frac{1}{2} \left(\sqrt{2} c_0 e^{b_1 z}+c_1\right)\right)}{a_2}
\end{eqnarray*}
under some constraints on the parameters, and it will be shown in the proof of Theorem \ref{growth estimate-2-factorizble equations} that for $u_i, i = 1, 2, 3$, Hayman's conjecture is sharp for $n = 2$. Here,  $J_{\nu}\left(\zeta \right)$ and $Y_{\nu}\left(\zeta \right)$ are Bessel functions of the first and second kinds respectively.

\end{remark}
Finally, Remark \ref{2factors-meormorphic solutions outside W} shows that the ODE \eqref{decomposition-2factors-1} may have meromorphic  solutions outside the class $W$.
%Throughout the paper, we use the standard notations and results of Nevanlinna theory \cite{Hayman1964,Laine1993} and we shall consider solutions meromorphic on the whole complex plane.
%We shall also apply the Painlev\'{e} test   and we refer the readers to \cite{Conte1999} for details.
% Finally, class $W$ is defined to be a set which consists of elliptic functions
%and their successive degeneracies, i.e., elliptic functions, rational functions of one
%exponential $\exp(kz), k \in \C$ and rational functions of $z$.

 %Remark \ref{2factors-meormorphic solutions outside W} shows that the ODE \eqref{decomposition-2factors-1} may have meromorphic  solutions outside the class $W$.

%\clearpage

\section{Proof of Proposition \ref{factorization of linear equation}}

\begin{proof} We claim that for a fixed $n \in \N$, the characteristic equation of \eqref{general factorizable linear equation} is given by
%It is easy to see that the equation \eqref{general factorizable linear equation} is linear. Then w
\begin{equation*}
\Pi_{m = 1}^n (z - b_m)  = 0.
\end{equation*}
We prove this by induction.
Let $     \mathcal{A}_n = [D - b_n] \cdots [D - b_2] [D - b_1] (u - \alpha)$, then the claim holds obviously for $n = 1$. Assume it is true for $n = k$, then if $n = k+1$, as
\begin{eqnarray*}
\mathcal{A}_{k + 1} = \dfrac{d\mathcal{A}_k}{ d z} - b_{k+1} \mathcal{A}_k,
\end{eqnarray*}
the characteristic equation of the linear ODE $\mathcal{A}_{k + 1} = 0$ is
\begin{eqnarray*}
0 =  z \Pi_{m = 1}^k (z - b_m) - b_{k+1} \Pi_{m = 1}^k (z - b_m)
= \Pi_{m = 1}^{k + 1} (z - b_m).
\end{eqnarray*}
Next, for the equation  \eqref{general linear equation}, one may express its characteristic equation as
\begin{eqnarray*}
0 = z^n + c_{n -1} z^{n-1} + \cdots + c_0   = \Pi_{m = 1}^n (z - d_m), d_m \in \C.
\end{eqnarray*}
Consequently, one can rewrite the equation \eqref{general linear equation} as the form of \eqref{general factorizable linear equation} by choosing $\alpha, b_m \in \C,  1 \leq m \leq n$ such that $b_m = d_m$ and $(-1)^{n + 1} \alpha \Pi_{m = 1}^n   b_m =c_0  $ for $c_1 =(-1)^{n } \Pi_{m = 1}^n   b_m \not = 0$, otherwise $\alpha$ will be taken as an arbitrary constant.
\end{proof}

\begin{remark}
It is clear from the proof that the decomposition  \eqref{general factorizable linear equation} for the equation \eqref{general linear equation} is unique up to a permutation of the $b_m$'s.
\end{remark}

\section{Proof of Theorem \ref{mero sol of general factorizable equation}}

We first introduce some terminologies and notations.
 %and $q$ is the smallest integer among the list of leading powers.
Let  $I = (i_0, i_1, \dots, i_n), i_k \in \N \cup \{ 0\}, 0 \leq k \leq n$ and
% let $f = \chi^p$, then we have
 \begin{equation*}
H(y,y',\cdots,y^{(n)}) = \sum_{I \in \Lambda  } c_I y^{i_0} (y')^{i_1} \cdots (y^{(n)})^{i_n}, y = y(z), c_I  \in \C \backslash \{0\}.
\end{equation*}
If $y = z^p , -p \in \N$, then
 \begin{equation*}
H(y,y',\cdots,y^{(n)}) =  \sum_{I \in \Lambda } C_I z^{\alpha_I},
\end{equation*}
where $C_I \in \C, \alpha_I =  i_0 p + i_1 (p - 1) + \cdots + i_n (p - n)$.

Next, let $A$ be the set of those negative integers $p$
such that $\min_{I \in \Lambda} \alpha_I$ is attained by at least two $I$'s. Note that if $A = \emptyset$, then $H(y,y',\cdots,y^{(n)}) = 0$ has no meromorphic solutions with at least one pole. Suppose $A \not = \emptyset$, then for each $p \in A$, denote by $\Lambda' = \{I' \in \Lambda | \alpha_{I'} = \min_{I \in \Lambda} \alpha_I
\}$ and  we define the {\bf dominant terms} for each $p \in A$ to be
% possible pair $(p, q)$
 \begin{equation*}
\hat{E} = \sum_{I \in \Lambda'  } c_I y^{i_0} (y')^{i_1} \cdots (y^{(n)})^{i_n} .
\end{equation*}

Suppose $u(z) = \sum_{n=0}^{+\infty} u_nz^{n+p}(u_0 \not = 0, - p\in \N)$ with a pole at $z= 0$ is a meromorphic solution of $H(y,y',\cdots,y^{(n)})=0$.
%where $ H $ is a polynomial in $y$ and its derivatives with constant coefficients.
Then if we  plug $y = u(z)$ into $H$, we will get an expression of the form $E = \sum_{j = 0}^{+\infty}E_j z^{j+q} = 0, E_j \in \C$.
 % where $\chi = z-z_0, E_j \in \C$.
Since $y = u(z)$ is a solution of $H = 0$, we must have $E_j = 0$, for all $j \in \N \cup \{0\}$. Note that $E_0 = E_0(u_0;p)$ is a polynomial in $u_0$ with coefficients depending on $p$.

On the other hand, for $j = 1, 2, \dots$,
we can express $E_
j  $ as:
\begin{equation} \label{recursion-painleve test}
E_j \equiv P(u_0; j)u_j + Q_j(\{u_l | l < j\})  ,
\end{equation}
where $P(u_0; j)$ is a polynomial in $j$ determined by $u_0$ and $Q_j$ is a polynomial in $j$ with coefficients in $u_l ( l < j)$.
In fact, it is known that \cite{Darboux1883} (see also  \cite[p.~15]{Conte1999})
\begin{equation} \label{indicial equation}
P(u_0; j) = \lim_{z \rightarrow 0} z^{-j-q} \hat{E}'( u_0z^{p})z^{j+p},
\end{equation}
where $\hat{E}'(u)$ is defined by
\begin{equation*}
\hat{E}'(u) v :=  \lim_{\lambda \rightarrow 0} \dfrac{ \hat{E}(u + \lambda v) - \hat{E}(u) }{\lambda}.
\end{equation*}
%For each $j$, the above equation is linear in $u_j$.
%For the equation \eqref{recursion-painleve test} to vanish identically,
In order to have $E_j = 0$ for all $j \in \N$,
we must have for each $j$, either
\begin{itemize}
\item [1)] $u_j$ is uniquely determined by $P(u_0; j)$ and $Q_j$, {\it or}

\item [2)] both  $P(u_0; j)$ and $Q_j$  vanish,

\end{itemize}
otherwise there is no meromorphic function satisfying $H(y,y',\cdots,y^{(n)}) = 0$.

Therefore if the polynomial $P(u_0; j)$ in $j$ does not have any nonnegative integer root, then each $u_j$ is uniquely determined by $P(u_0; j)$ and $Q_j$.

 \begin{definition}
 The zeros of $P(u_0; j)$ are defined to be the {\bf Fuchs indices} of the equation $H(y,y',\cdots,y^{(n)}) = 0$ and the {\bf indicial equation} is defined as  $P(u_0; j) = 0$.
\end{definition}

From the above definition and \eqref{indicial equation}, one sees that the Fuchs indices of an ODE are determined by its dominant terms and the values of $u_0$. Therefore, to compute the Fuchs indices of the ODE \eqref{deg1-general factorizable equation}, we have to find its dominant terms, denoted by $\hat{E}_n$, and $u_0$. We will see that any terms involving $b_i$'s will not be included  in the dominant terms when all the $a_i$'s are non-zero. Therefore, it would be useful to first look at
\begin{eqnarray}
\mathfrak{D}_n = \mathfrak{D}_n (u(z)) := [D - a_n u] \cdots [D - a_2 u] [D - a_1 u] u.
\end{eqnarray}
Then we may express $\mathfrak{D}_n$ as
\begin{eqnarray}
\mathfrak{D}_n
&=& \sum_{I \in \Omega} c_{I} u^{i_0} (u')^{i_1} \cdots (u^{(n)})^{i_n} \label{degree-general factorizable equation}
\\
&=&  u^{(n)} + (-1)^n \Pi_{k = 1}^n a_k u^{n+1} + \cdots, \nonumber
\end{eqnarray}
where $c_I \in \C, i_\kappa \in \N \cup \{0\}, \kappa = 0, 1, \dots, n$,
and we have
\begin{lemma} \label{weight-general factorizable equation}
For any $(i_0, i_1, \dots, i_n) \in \Omega$ in \eqref{degree-general factorizable equation}, we have
\begin{eqnarray*}
i_0 + 2i_1 + \cdots + (n + 1)i_n = n+1.
\end{eqnarray*}
\end{lemma}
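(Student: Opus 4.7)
The plan is to prove this by induction on $n$, using the recursive structure $\mathfrak{D}_n = D\,\mathfrak{D}_{n-1} - a_n u\,\mathfrak{D}_{n-1}$ together with a simple grading argument.

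First, I would introduce a weight function on differential monomials. Assign weight $k+1$ to the symbol $u^{(k)}$ and extend multiplicatively, so that the monomial $u^{i_0}(u')^{i_1}\cdots(u^{(n)})^{i_n}$ carries weight $i_0 + 2i_1 + \cdots + (n+1)i_n$. The claim of the lemma is then precisely that $\mathfrak{D}_n$ is weight-homogeneous of weight $n+1$. I would verify the two elementary closure properties that drive the induction: multiplication by $u$ raises the weight of any weight-homogeneous polynomial by $1$ (obvious), and differentiation $D$ also raises the weight by $1$. The latter follows because $D$ applied to $u^{i_0}(u')^{i_1}\cdots(u^{(n)})^{i_n}$ yields a sum of monomials in each of which some $i_k$ is replaced by $i_k - 1$ and some $i_{k+1}$ by $i_{k+1}+1$, changing the weight by $-(k+1) + (k+2) = 1$. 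Hence the operator $D - a_j u$ sends weight-homogeneous polynomials of weight $w$ to weight-homogeneous polynomials of weight $w+1$.

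For the base case $n = 1$, a direct computation gives $\mathfrak{D}_1 = [D - a_1 u]u = u' - a_1 u^2$, and both monomials have weight $2 = n+1$. For the inductive step, assume $\mathfrak{D}_{n-1}$ is weight-homogeneous of weight $n$; then by the closure property above, $\mathfrak{D}_n = [D - a_n u]\mathfrak{D}_{n-1}$ is weight-homogeneous of weight $n+1$, which is exactly what needs to be shown.

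There is no real obstacle here; the only point to be careful about is checking that the weight-grading behaves additively under both $D$ and multiplication by $u$ — this is a routine but essential observation that makes the induction collapse to a one-line calculation. The lemma will then be used later in the Painlev\'e-analysis computation of the dominant terms $\hat{E}_n$ of the full ODE \eqref{deg1-general factorizable equation}, since any monomial involving a coefficient $b_i$ is produced by replacing some factor $a_j u$ in $\mathfrak{D}_n$ by the constant $b_j$ (or by absorbing $\alpha$), which strictly lowers the weight and so cannot belong to the dominant part when all $a_j \neq 0$.
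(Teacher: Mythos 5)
Your proof is correct and follows essentially the same route as the paper: induction on $n$ via the recursion $\mathfrak{D}_{n+1}=[D-a_{n+1}u]\mathfrak{D}_n$, checking that differentiation replaces some $i_k$ by $i_k-1$ and $i_{k+1}$ by $i_{k+1}+1$ (weight $+1$) while multiplication by $u$ increments $i_0$ (weight $+1$). Your packaging of this bookkeeping as a weight-homogeneity grading is only a cosmetic streamlining of the paper's explicit index computation.
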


\begin{proof}
  We prove by induction.
 It is obvious for $n = 1$.
Now suppose $i_0 + 2i_1 + \cdots + (n + 1)i_n = n+1$, then% for $n = K$, then for $n = K+1$,
\begin{eqnarray} \label{recursion1-general factorizable equation}
{\color{white} aaa}\mathfrak{D}_{n+1} &=& [D - a_{n+1} u] \mathfrak{D}_n
\\
&=& \dfrac{d \mathfrak{D}_n}{d z} - a_{n+1} u  \mathfrak{D}_n \nonumber
\\
&=& \sum_{(i_0, i_1, \dots, i_n )} c_{i_0, i_1, \dots, i_n} \left[ \sum_{k=0}^n i_k u^{i_0} (u')^{i_1} \cdots (u^{(n)})^{i_n}  \dfrac{u^{(k+1)}}{u^{(k)}} \right] \nonumber
\\
&& - \sum_{(i_0, i_1, \dots, i_n )} a_{n+1} c_{i_0, i_1, \dots, i_n}    u^{i_0 + 1} (u')^{i_1} \cdots (u^{(n)})^{i_n} \nonumber
\\
&=&  \sum_{(j_0, j_1, \dots, j_n, j_{n+1} )}   d_{j_0, j_1, \dots, j_n,  j_{n+1}  }    u^{j_0 } (u')^{j_1} \cdots (u^{(n)})^{j_n} (u^{(n + 1)})^{j_{n + 1 }} . \nonumber
\end{eqnarray}
Here $(j_0, j_1, \dots, j_n, j_{n+1} ) = (i_0 + 1, i_1, \dots, i_n, 0 )$ or $(i_0, i_1, \dots, i_k - 1, i_{k+1} + 1, \dots )$, and in both cases we have $j_0 + 2 j_1 + \cdots +(n+1) j_n + (n+2) j_{n+1} = n+2$.

\end{proof}

\begin{lemma} \label{order+lead coe-general factorizable equation}
Suppose $u(z) =  \sum_{r = 0}^{\infty} u_r z^{r+ p} \, ( u_0 \not = 0, -p \in \N)$ is a meromorphic solution of the ODE \eqref{deg1-general factorizable equation}
with all the $a_i \not = 0$, then for any $n \in \N$
%has the following properties
\begin{itemize}
%\item [(i)]

\item [(i)] $p = -1$.

\item [(ii)] The dominant terms $\hat{E_n}$ of the equation  \eqref{deg1-general factorizable equation} satisfies $\hat{E_n} = \mathfrak{D}_n$ and hence $P(u_0;j)$ does not depend on $b_i$'s for $j \in \N$.

\item [(iii)] $u_0 \in  \left\{ - \dfrac{1}{a_1}, - \dfrac{2}{a_2}, \dots, - \dfrac{n}{a_n} \right \}$.

\end{itemize}
\end{lemma}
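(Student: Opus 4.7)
The plan is to combine a structural weight analysis of the expanded left-hand side $E_n := [D-(a_n u+b_n)]\cdots[D-(a_1 u+b_1)](u-\alpha)$ with a direct computation of $\mathfrak{D}_n$ evaluated at $u_0 z^{-1}$. First I would strengthen Lemma~\ref{weight-general factorizable equation} by induction on $n$, using the recursion $E_{k+1}=DE_k-(a_{k+1}u)E_k-b_{k+1}E_k$. Writing each monomial of $E_n$ as a constant (in $a_i,b_i,\alpha$) times $u^{i_0}(u')^{i_1}\cdots(u^{(n)})^{i_n}$ and assigning it the weight $w := i_0+2i_1+\cdots+(n+1)i_n$, both $D$ and multiplication by $a_{k+1}u$ raise $w$ by exactly one whereas multiplication by the constant $b_{k+1}$ preserves $w$. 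It follows that every monomial of $E_n$ has weight $\leq n+1$, that the weight-$(n+1)$ part of $E_n$ equals $\mathfrak{D}_n$ exactly, and that the unique monomial of maximal total degree $|I|=i_0+\cdots+i_n=n+1$ is $(-1)^n(\prod_k a_k)u^{n+1}$.

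Next I would feed this into a standard Painlev\'e leading-order comparison. For $u(z)\sim u_0 z^p$, each monomial as above contributes at leading order a power $z^{|I|(p+1)-w}$. When $p=-1$ this reduces to $z^{-w}$, so the most negative exponent $-(n+1)$ is attained exactly by the weight-$(n+1)$ monomials---that is, exactly those of $\mathfrak{D}_n$; hence $\hat E_n=\mathfrak{D}_n$, and formula~\eqref{indicial equation} yields (ii), since $P(u_0;j)$ is then computed purely from $\mathfrak{D}_n$ and is independent of the $b_i$'s. When $p\leq -2$, one has $p+1\leq -1$, and setting $\mu=n+1-|I|\geq 0$ and $\nu=n+1-w\geq 0$, the inequality $|I|(p+1)-w > (n+1)p$ is equivalent to $\mu|p+1|+\nu>0$, which holds for every monomial of $E_n$ other than $u^{n+1}$. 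Thus $u^{n+1}$ is the unique strict minimizer, and its leading coefficient $(-1)^n(\prod_k a_k)u_0^{n+1}$ is non-zero since $u_0\neq 0$ and all $a_k\neq 0$. No balance of at least two dominant terms is then possible, excluding $p\leq -2$ and proving (i).

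Finally, for (iii) I would compute $\mathfrak{D}_n(u_0 z^{-1})$ iteratively. With $u=u_0 z^{-1}$ and any constant $C$, the elementary identity
\[ [D-a_{k+1}u]\bigl(Cz^{-(k+1)}\bigr) = -(k+1+a_{k+1}u_0)\,Cz^{-(k+2)} \]
yields by induction on $n$ the closed form
\[ \mathfrak{D}_n(u_0 z^{-1}) = (-1)^n u_0\,z^{-(n+1)}\prod_{k=1}^{n}(k+a_k u_0), \]
whose vanishing together with $u_0\neq 0$ forces $u_0\in\{-1/a_1,-2/a_2,\ldots,-n/a_n\}$. The main obstacle will be the first structural step: one must verify carefully that every monomial of $E_n$ carrying at least one factor $b_i$ or $\alpha$ has weight strictly below $n+1$, which is precisely what removes these parameters from the Painlev\'e balance. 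Once the identification $\hat E_n=\mathfrak{D}_n$ is secured, the dominant-balance comparison and the iterated operator calculation in the remaining steps are routine.
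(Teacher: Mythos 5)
Your proposal is correct and follows essentially the same route as the paper: the weight bookkeeping extending Lemma~\ref{weight-general factorizable equation} to the terms carrying $b_i$ and $\alpha$, the dominant-balance comparison $|I|(p+1)-w$ versus $(n+1)p$ to rule out $p\le -2$ and to identify $\hat E_n=\mathfrak{D}_n$, and the evaluation of $\mathfrak{D}_n$ at $u_0z^{-1}$ for part (iii). Your closed-form $\mathfrak{D}_n(u_0z^{-1})=(-1)^n u_0 z^{-(n+1)}\prod_{k=1}^{n}(k+a_ku_0)$ is in fact a slightly cleaner execution of (iii) than the paper's induction-plus-degree-count, since it exhibits all roots of $R_n$ at once.
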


\begin{proof}
%One may easily see that the dominant terms $\hat{E_n}$ of the equation  \eqref{general factorizable equation} satisfies
%$\hat{E_n} \prec E_n$, where $\prec$ means that $\hat{E_n}$ is the summation of certain terms which come from those terms in $E_n$.
\begin{itemize}

\item [(i)] For a fixed $n \in \N$, we prove by contradiction. First rewrite the ODE \eqref{deg1-general factorizable equation} as
\begin{eqnarray} \label{deg1-1-general factorizable equation}
 \mathfrak{D}_n + \sum_{I \in \Omega'} c_{I} ' u^{i_0} (u')^{i_1} \cdots (u^{(n)})^{i_n}= 0,
\end{eqnarray}
then by Lemma \ref{weight-general factorizable equation}, one can see that $i_0 + 2i_1 + \cdots + (n + 1)i_n < n+ 1$ for any $I = (i_0, i_1, \dots, i_n ) \in \Omega'$.

Assume now $p \leq -2$, then for any term $u^{i_0} (u')^{i_1} \cdots (u^{(n)})^{i_n}$ in \eqref{deg1-1-general factorizable equation} with $ (i_0, i_1, \dots, i_n ) \not = (l, 0, \dots, 0 ), 0 \leq l \leq n+1$, according to Lemma \ref{weight-general factorizable equation}, its  order at $z = 0$ is
\begin{eqnarray*}
  &&  i_0 p + i_1 (p-1) + \dots + i_n (p - n)
\\
&=& \left( \sum_{k=0}^n i_k   \right) (p + 1) - \sum_{k=0}^n (k + 1) i_k
\\
&\geq& \left( \sum_{k=0}^n i_k   \right) (p + 1) - (n + 1)
\\
&>& \left( \sum_{k=0}^n (k + 1) i_k   \right) (p + 1) - (n + 1)
\\
&\geq&  p (n + 1) .
\end{eqnarray*}
 As the order of $(-1)^n \Pi_{k = 1}^n a_k u^{n+1}$ at $z = 0$ is $p (n + 1)$, which is lower than that of any other term in \eqref{deg1-1-general factorizable equation}, it cannot be balanced unless $u_0 = 0$. Consequently, we must have $p = -1$.

  %We prove by induction.
%
%\begin{itemize}
%\item [ b1)] It is obvious for $n = 1$.
%
%\item [ b2)] Suppose $p = -1$ for $n = K$, then for $n = K+1$, we prove by contradiction.
%Next, assume $p = p_0 \leq -2$. Then for $n = K$, there is only one term in $\hat{E_n}$ with the lowest pole order otherwise it can be balanced by other terms which contradicts to our assumption that $p = -1$ for $n = K$.
%\begin{eqnarray}
%\mathfrak{D}_{K+1} &=& [D - a_{k+1} u] \mathfrak{D}_K
%\\
%&=& \dfrac{d \mathfrak{D}_K}{d z} - a_{k+1} u  \mathfrak{D}_K
%\\
%&=& \sum_{(i_0, i_1, \dots, i_K )} c_{i_0, i_1, \dots, i_K} \left[ \sum_{k=0}^K i_k u^{i_0} (u')^{i_1} \cdots (u^{(K)})^{i_K}  \dfrac{u^{(k+1)}}{u^{(k)}} \right]
%\\
%&& - \sum_{(i_0, i_1, \dots, i_K )} a_{k+1} c_{i_0, i_1, \dots, i_K}    u^{i_0 + 1} (u')^{i_1} \cdots (u^{(K)})^{i_K}
%\\
%&=&  \sum_{(j_0, j_1, \dots, j_K, j_{K+1} )}   d_{j_0, j_1, \dots, j_K,  j_{K+1}  }    u^{j_0 } (u')^{j_1} \cdots (u^{(K)})^{j_K} (u^{(K + 1)})^{j_{K + 1 }} .
%\end{eqnarray}
%Here $(j_0, j_1, \dots, j_K, j_{K+1} ) = (i_0 + 1, i_1, \dots, i_K, 0 )$ or $(i_0, i_1, \dots, i_k - 1, i_{k+1} + 1, \dots )$, and in both cases we have $j_0 + 2 j_1 + \cdots +(K+1) j_K + (K+2) j_{K+1} = K+2$.
%
%
%
%\end{itemize}
\item [(ii)] As $p = - 1$,  we know that the order at $z = 0$  of each term  $$u^{i_0} (u')^{i_1} \cdots  (u^{(n)})^{i_n}$$ in \eqref{deg1-1-general factorizable equation} is no less than $-(n + 1)$.
     %$ i_0 p + i_1 (p-1) + \dots + i_n (p - n)  = -(n + 1)$.
      Therefore   $\hat{E_n}  $ consists of all terms with order $-(n + 1)$ at $z = 0$, and thus $\hat{E_n} = \mathfrak{D}_n$.

\item [(iii)]
%$u_0 \in \left \{ - \dfrac{1}{a_1}, - \dfrac{2}{a_2}, \dots, - \dfrac{n}{a_n} \right \}$.

To compute $u_0$, without loss of generality, we may assume $u(z) = \frac{u_0}{z}$.
 We then prove by induction. It is obvious for $n = 1$.
Suppose $u_0 \in \left \{ - \dfrac{1}{a_1}, - \dfrac{2}{a_2}, \dots, - \dfrac{n}{a_n} \right \}$ for an $n \in \mathbb{N}$ and we consider the $n+1$ case. If $u_0 = - \dfrac{k}{a_k}$ for some $1 \leq k \leq n$, then since $\hat{E}_n = \mathfrak{D}_n$ we have by direct checking that
\begin{eqnarray*}
  \mathfrak{D}_n \left(\frac{u_0}{z}\right) = 0, \quad \left(\dfrac{d \mathfrak{D}_n}{d z} \right)  \left(\frac{u_0}{z}\right) = 0,
% \dfrac{d \mathfrak{D}_K}{d z} - a_{k+1} u  \mathfrak{D}_K
\end{eqnarray*}
and hence
\begin{eqnarray*}
 \mathfrak{D}_{n+1} \left(\frac{u_0}{z}\right) = \left(\dfrac{d \mathfrak{D}_n}{d z} \right)  \left(\frac{u_0}{z}\right)
 - a_{k+1} \frac{u_0}{z}   \mathfrak{D}_n \left(\frac{u_0}{z}\right) = 0.
% \dfrac{d \mathfrak{D}_K}{d z} - a_{k+1} u  \mathfrak{D}_K
\end{eqnarray*}
For $u_0 = - \dfrac{n + 1}{a_{n + 1}}$, from \eqref{recursion1-general factorizable equation}   we know that

%{\footnotesize
\begin{eqnarray*}
&& \mathfrak{D}_{n+1} \left(\frac{u_0}{z}\right)
\\
&=& \left[ \sum_{(i_0, i_1, \dots, i_n )} c_{i_0, i_1, \dots, i_n} \left[ \sum_{k=0}^n i_k u^{i_0} (u')^{i_1} \cdots (u^{(n)})^{i_n}  \dfrac{u^{(k+1)}}{u^{(k)}} \right]  - a_{n+1} u  \mathfrak{D}_n \right] \left(\frac{u_0}{z}\right)
\\
&=& \left[ \sum_{(i_0, i_1, \dots, i_n )} \dfrac{ c_{i_0, i_1, \dots, i_n}}{z} \left[ - \sum_{k=0}^n  (k + 1) i_k u^{i_0} (u')^{i_1} \cdots (u^{(n)})^{i_n}  \right]  - a_{n+1} u  \mathfrak{D}_n \right] \left(\frac{u_0}{z}\right)
\\
&=& \left[ - \dfrac{1}{z} \sum_{k=0}^n  (k + 1) i_k \left[ \sum_{(i_0, i_1, \dots, i_n )} c_{i_0, i_1, \dots, i_n}   u^{i_0} (u')^{i_1} \cdots (u^{(n)})^{i_n}  \right]  - a_{n+1} u  \mathfrak{D}_n \right] \left(\frac{u_0}{z}\right)
\\
&=& \dfrac{1}{z} \left(- \sum_{k=0}^n  (k + 1) i_k \right) \mathfrak{D}_n \left(\frac{u_0}{z}\right) - a_{n+1} u  \mathfrak{D}_n  \left(\frac{u_0}{z}\right)
%\\
%&=& - \left(  K + 1 \right) \mathfrak{D}_K  - a_{K+1} u  \mathfrak{D}_K .
\\
&=& \left(  -  \dfrac{n + 1 }{z}    - a_{n+1} u \right) \mathfrak{D}_n \left(\frac{u_0}{z}\right)
\\
%&=& \left(  -  \dfrac{K + 1 }{z}    + a_{K+1}  \dfrac{K + 1}{z  a_{K + 1} }\right) \mathfrak{D}_K \left(\frac{u_0}{z}\right)
%\\
&=& 0.
\end{eqnarray*}
%}
%Therefore,
%\begin{eqnarray*}
%\mathfrak{D}_{K+1} \left(\frac{u_0}{z}\right)  &=& [  - \left(  K + 1 \right)    - a_{K+1} u ] \mathfrak{D}_K \left(\frac{u_0}{z}\right) .
%\end{eqnarray*}

On the other hand, it is easy to check that $ \dfrac{z^{n + 2}}{u_0} \mathfrak{D}_{n+1} \left(\frac{u_0}{z}\right)$ is a polynomial in $u_0$ of degree $n+1$ with coefficients depending only on $a_i, 1 \leq i \leq n+1$, and hence  the set of nonzero roots $u_0$ of $ z^{n + 2} \mathfrak{D}_{n+1} \left(\frac{u_0}{z}\right)=0 $ is $ \left \{ - \dfrac{1}{a_1}, - \dfrac{2}{a_2}, \dots, - \dfrac{n+1}{a_{n+1}} \right \}$.
\end{itemize}
\end{proof}

Now we denote by $R_n (u_0) = z^{n + 1}  \mathfrak{D}_{n} \left(\frac{u_0}{z}\right)$, then $R_n (u_0)$ is a polynomial of degree $n + 1$ in $u_0$ with the set of zeros $ \left \{0, - \dfrac{1}{a_1}, - \dfrac{2}{a_2}, \dots, - \dfrac{n}{a_{n}} \right \}$. Let the indicial equation of $\mathfrak{D}_{n} = 0$ be $P_n (u_0; j) = 0$. From Lemma \ref{order+lead coe-general factorizable equation}, we know that $P_n (u_0; j) = 0$ is also the indicial equation of \eqref{deg1-general factorizable equation} when all the $a_i$'s are non-zero. Let the indicial equation of $\dfrac{d \mathfrak{D}_n}{d z} = 0$ be $P_{n'} (u_0; j) = 0$, then we have
\begin{proposition} \label{relations on indicial euqations}
For any $n \in \N$,
%and  $ \mathfrak{D}_n = \mathfrak{D}_n (u(z)) = \sum_{I} c_{I} u^{i_0} (u')^{i_1} \cdots (u^{(n)})^{i_n}$
% it holds that
\begin{itemize}
\item [1)]
$
 P_{n'} (u_0; j)   = P_{n} (u_0; j) (j - n - 1) ;
$
\item [2)] $ P_{n + 1} (u_0; j) = P_{n} (u_0; j) (j - n - 1 - a_{n+1} u_0) - a_{n+1} R_n  (u_0 )$;

\item [3)]  If $u_0 = - \dfrac{k}{a_k}$, where $1 \leq k \leq n$, then
\begin{eqnarray*}
 P_{n + 1} (u_0; j) = 0 \Leftrightarrow  P_{n} (u_0; j) = 0 \; \text{or} \; j = n + 1 - k \dfrac{a_{n+1}}{a_k}.
\end{eqnarray*}
%where $1 \leq k \leq n$,
 If $u_0 = - \dfrac{n+1}{a_{n+1}}$,  then
\begin{eqnarray*}
 P_{n + 1} (u_0; j) = j  P_{n} (u_0; j)  - a_{n+1} R_n(u_0).
\end{eqnarray*}
\end{itemize}

\end{proposition}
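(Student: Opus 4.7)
The plan is to extract all three identities directly from the linearization definition of the indicial polynomial. Since every monomial of $\mathfrak{D}_n$ has weight $n+1$ by Lemma~\ref{weight-general factorizable equation}, a quick bookkeeping shows that the Fr\'echet derivative $L_n := \mathfrak{D}_n'(u_0/z)$ applied to $v = z^{j-1}$ is a single pure power, namely
\[
L_n(z^{j-1}) = P_n(u_0;j)\, z^{j-n-1},
\]
with no lower-order-in-$z$ corrections. Similarly $L_{n'}(z^{j-1}) = P_{n'}(u_0;j)\, z^{j-n-2}$, and $\mathfrak{D}_n(u_0/z) = R_n(u_0)\, z^{-(n+1)}$ by the very definition of $R_n$.

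For part (1), I would use that the total $z$-derivative commutes with the Fr\'echet derivative in the $u$-direction, so $L_{n'}(v) = (d/dz)L_n(v)$. Applied to $v = z^{j-1}$, this gives
\[
P_{n'}(u_0;j)\, z^{j-n-2} = \frac{d}{dz}\bigl[P_n(u_0;j)\, z^{j-n-1}\bigr] = (j-n-1)\,P_n(u_0;j)\, z^{j-n-2},
\]
which is (1). For part (2), I would linearize the recursion $\mathfrak{D}_{n+1} = \mathfrak{D}_{n'} - a_{n+1}\,u\,\mathfrak{D}_n$ at $u_0/z$ via the Leibniz rule,
\[
L_{n+1}(v) = L_{n'}(v) - a_{n+1}\bigl(v\,\mathfrak{D}_n(u_0/z) + (u_0/z)\,L_n(v)\bigr),
\]
substitute $v = z^{j-1}$, and collect the three contributions at order $z^{j-n-2}$; this reproduces the claimed formula $P_{n+1}(u_0;j) = (j - n - 1 - a_{n+1}u_0)P_n(u_0;j) - a_{n+1}R_n(u_0)$.

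Part (3) then reduces to substitution. For $u_0 = -k/a_k$ with $1 \le k \le n$, the factor $R_n(u_0)$ vanishes (these values are precisely the nonzero roots of $R_n$, as noted in the paragraph preceding the proposition), so (2) collapses to $P_{n+1}(u_0;j) = (j-n-1 + ka_{n+1}/a_k)\,P_n(u_0;j)$, from which the stated disjunction is immediate. For $u_0 = -(n+1)/a_{n+1}$ the coefficient $j - n - 1 - a_{n+1}u_0$ simplifies to $j$, giving the second identity. I do not foresee a genuine obstacle; the only subtlety is confirming the exactness (no lower-order-in-$z$ tails) of the linearization, which rests on the homogeneity furnished by Lemma~\ref{weight-general factorizable equation}, together with the commutation $L_{n'} = (d/dz)L_n$, which is a one-line exchange of partial derivatives in $\epsilon$ and $z$.
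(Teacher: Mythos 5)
Your argument is correct, and for the heart of the proposition it takes a genuinely cleaner route than the paper. The paper proves part (1) by writing out $P_n(u_0;j)$ and $P_{n'}(u_0;j)$ explicitly in the coordinates $v_k = u_0(-1)^k k!$ and grinding through a multi-page term-by-term recombination of the sums coming from $\tfrac{d\mathfrak{D}_n}{dz}$; you instead observe that, by Lemma~\ref{weight-general factorizable equation}, every monomial of $\mathfrak{D}_n$ has weight $n+1$, so linearizing at $u_0/z$ against $v=z^{j-1}$ produces a single pure power $P_n(u_0;j)z^{j-n-1}$ (this is exactly what makes \eqref{indicial equation} well defined, and likewise $z^{n+1}\mathfrak{D}_n(u_0/z)=R_n(u_0)$ is $z$-independent), and then the one-line commutation $\mathfrak{D}_n'(u)\mapsto \tfrac{d}{dz}\circ\mathfrak{D}_n'(u)$ — an exchange of $\partial_\epsilon$ and $\partial_z$ for polynomial expressions — immediately yields $P_{n'}(u_0;j)=(j-n-1)P_n(u_0;j)$. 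Your parts (2) and (3) coincide with the paper's: (2) is the Leibniz linearization of $\mathfrak{D}_{n+1}=\tfrac{d\mathfrak{D}_n}{dz}-a_{n+1}u\,\mathfrak{D}_n$ (the paper's ``one can easily deduce'' step), and (3) is substitution, using that $-k/a_k$, $1\le k\le n$, are roots of $R_n$ as stated just before the proposition. What your approach buys is economy and transparency: the homogeneity-plus-commutation argument replaces the paper's combinatorial bookkeeping and makes clear why the factor $(j-n-1)$ appears (it is just differentiation of $z^{j-n-1}$); what the paper's computation buys is an explicit closed-form expression for $P_n(u_0;j)$ as a sum over multi-indices, which your softer argument does not produce but which is not needed for the proposition. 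The one subtlety you flag — absence of lower-order tails in the linearization — is indeed settled by the weight homogeneity, so there is no gap.
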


\begin{remark}
If we choose $a_1 = a_2 = 1$, then $P_1(-1, j ) = j +1 $ and $ P_2(-1, j ) = j^2 - 1$.
\end{remark}

\begin{proof}
We set $v_k = u_0  (-1)^k k ! $ for $k = 0, 1, 2, \dots$, then we have $v_{k + 1} = - (k + 1) v_k$ and
from \eqref{indicial equation} and  $\hat{E}_n = \mathfrak{D}_n = \sum_{I \in \Omega} c_{I} u^{i_0} (u')^{i_1} \cdots (u^{(n)})^{i_n}$,
\begin{eqnarray*}
 P_n (u_0; j) = \sum_{I} c_{I} \left[ \sum_{k=1}^n i_k  \dfrac{\Pi_{\alpha=1}^n v_{\alpha}^{i_\alpha}}{v_k} (j - 1) (j - 2) \cdots (j-k) + i_0 v_0^{i_0-1} v_1^{i_1}   \cdots v_n^{i_n}  \right]
 %v_0^{i_0} v_1^{i_1} \cdots v_k^{i_k - 1} \cdots v_n^{i_n}
\end{eqnarray*}
Since   $\dfrac{d \mathfrak{D}_n}{d z} =\sum_{I} c_{I}
\left[ \sum_{k=0}^n i_k u^{i_0} (u')^{i_1} \cdots (u^{(n)})^{i_n}  \dfrac{u^{(k+1)}}{u^{(k)}} \right]   $, we have from \eqref{indicial equation},
%(v_{k+1})^{i_{k+1} + 1}
%{\color{blue}
%{\small
\begin{eqnarray*}
 && P_{n'} (u_0; j)
 \\
  &=& \sum_{I} c_{I}
 \\
 && \left[  \sum_{k=1}^n i_k \left(\sum_{\substack{m = 1 \\ m \not = k, k+1}}^{n} i_m v_0^{i_0} v_1^{i_1} \cdots v_k^{i_k - 1} v_{k+1}^{i_{k+1} + 1} \cdots v_m^{i_m - 1}\cdots v_n^{i_n}  (j - 1)  \cdots (j-m)\right. \right.
 \\
 && + v_0^{i_0 - 1} v_1^{i_1} \cdots v_k^{i_k - 1} v_{k+1}^{i_{k+1} + 1} \cdots   v_n^{i_n}
 \\
 &&+ (i_k - 1) v_0^{i_0} v_1^{i_1} \cdots v_k^{i_k - 2} v_{k+1}^{i_{k+1} + 1 }  \cdots v_n^{i_n}  (j - 1)  \cdots (j-k)
 \\
 &&   \left. + (i_{k + 1} + 1) v_0^{i_0} v_1^{i_1} \cdots v_k^{i_k -1} v_{k+1}^{i_{k+1} }  \cdots v_n^{i_n}  (j - 1)  \cdots (j-k) (j-k - 1) \right)
  \\
 &&   + i_0 \left( \sum_{m = 2}^{n} i_m v_0^{i_0 -1} v_1^{i_1 + 1}  \cdots v_m^{i_m - 1}\cdots v_n^{i_n}  (j - 1)  \cdots (j-m) \right.
  \\
 &&  \left.  \left. + (i_0 - 1) v_0^{i_0 -2} v_1^{i_1 + 1}  \cdots v_m^{i_m - 1}\cdots v_n^{i_n} + (i_1 + 1) v_0^{i_0 - 1} v_1^{i_1 }  \cdots v_n^{i_n} (j - 1) \right) \right].
\end{eqnarray*}
Now we compute each term in the above equality, for $1 \leq k \leq n$,
\begin{eqnarray*}
 && \sum_{I} c_{I} \Big[ \sum_{\substack{m = 1 \\ m \not = k, k+1}}^{n} i_m v_0^{i_0} v_1^{i_1} \cdots v_k^{i_k - 1} v_{k+1}^{i_{k+1} + 1} \cdots v_m^{i_m - 1}\cdots v_n^{i_n}  (j - 1)  \cdots (j-m)
 \\
 && + v_0^{i_0 - 1} v_1^{i_1}  \cdots v_k^{i_k - 1} v_{k+1}^{i_{k+1} + 1}  \cdots   v_n^{i_n}
 \\
 &&+ (i_k - 1) v_0^{i_0} v_1^{i_1} \cdots v_k^{i_k - 2} v_{k+1}^{i_{k+1} + 1 }  \cdots v_n^{i_n}  (j - 1)  \cdots (j-k)
 \\
 &&    + (i_{k + 1} + 1) v_0^{i_0} v_1^{i_1} \cdots v_k^{i_k -1} v_{k+1}^{i_{k+1} }  \cdots v_n^{i_n}  (j - 1)  \cdots (j-k) (j-k - 1) \Big ]
 \\
 &=& \sum_{I} c_{I} \Big[ - (k + 1)\sum_{\substack{m = 1 \\ m \not = k, k+1}}^{n} i_m v_0^{i_0} \cdots v_k^{i_k } v_{k+1}^{i_{k+1} } \cdots v_m^{i_m - 1}\cdots v_n^{i_n}  (j - 1)  \cdots (j-m)
 \\
 && - (k + 1) v_0^{i_0 - 1}  \cdots v_k^{i_k } v_{k+1}^{i_{k+1} } \cdots   v_n^{i_n}
 \\
 &&- (k + 1) (i_k - 1) v_0^{i_0}  \cdots v_k^{i_k - 1} v_{k+1}^{i_{k+1} }  \cdots v_n^{i_n}  (j - 1)  \cdots (j-k)
 \\
 &&    - (k + 1) (i_{k + 1} + 1) v_0^{i_0} \cdots v_k^{i_k } v_{k+1}^{i_{k+1} -1 }  \cdots v_n^{i_n}  (j - 1) \cdots (j-k) (j-k - 1) \Big]
 \\
 &=& - (k + 1) \left \{  P_n(u_0; j)   +   \sum_{I} c_{I} \Big[    - v_0^{i_0}  \cdots v_k^{i_k - 1} v_{k+1}^{i_{k+1} }  \cdots v_n^{i_n}  (j - 1)  \cdots (j-k) \right.
  \\
  && \left. + v_0^{i_0}  \cdots v_k^{i_k } v_{k+1}^{i_{k+1} -1 }  \cdots v_n^{i_n}  (j - 1)  \cdots (j-k) (j-k - 1) \Big] \right\}
  \\
  &=& - (k + 1)  \left \{  P_n(u_0; j) + \sum_{I} c_{I} \Big[    - v_0^{i_0} \cdots v_k^{i_k - 1} v_{k+1}^{i_{k+1} }  \cdots v_n^{i_n}  (j - 1)  \cdots (j-k) \right.
  \\
  &&  \left. - \dfrac{1}{k+1} v_0^{i_0}  \cdots v_k^{i_k -1 } v_{k+1}^{i_{k+1}  }  \cdots v_n^{i_n}  (j - 1) \cdots (j-k) (j-k - 1)\Big] \right\}
  \\
  &=& - (k + 1) [P_n(u_0; j) -   \sum_{I} c_{I}  \dfrac{j}{k+1}v_0^{i_0}  \cdots v_k^{i_k - 1} v_{k+1}^{i_{k+1} }  \cdots v_n^{i_n}  (j - 1)  \cdots (j-k)]
  \\
  &=& - (k + 1)  P_n(u_0; j) + \sum_{I} c_{I}   j v_0^{i_0} \cdots v_k^{i_k - 1} v_{k+1}^{i_{k+1} }  \cdots v_n^{i_n}  (j - 1)  \cdots (j-k).
\end{eqnarray*}
%}
Similarly, for $k = 0$,
\begin{eqnarray*}
 &&   \sum_{I} c_{I} \Big[ \sum_{m = 2}^{n} i_m v_0^{i_0 -1} v_1^{i_1 + 1}  \cdots v_m^{i_m - 1}\cdots v_n^{i_n}  (j - 1)  \cdots (j-m)
  \\
&& + (i_0 - 1) v_0^{i_0 -2} v_1^{i_1 + 1}  \cdots v_m^{i_m - 1}\cdots v_n^{i_n} + (i_1 + 1) v_0^{i_0 - 1} v_1^{i_1 }  \cdots v_n^{i_n} (j - 1) \Big]
 \\
&=&  -    P_n(u_0; j) +  \sum_{I} c_{I}  j v_0^{i_0 - 1} v_1^{i_1}  \cdots v_n^{i_n}   .
\end{eqnarray*}
Therefore
\begin{eqnarray*}
 P_{n'} (u_0; j) &=& - \sum_{k = 0}^n (k + 1)i_k  P_{n} (u_0; j) + j P_{n} (u_0; j)
 \\
 &=& P_{n} (u_0; j) (j - n - 1).
\end{eqnarray*}
As $
\mathfrak{D}_{n+1} = \dfrac{d \mathfrak{D}_n}{d z} - a_{n+1} u  \mathfrak{D}_n,
$
one can easily deduce that
\begin{eqnarray*}
 P_{n + 1} (u_0; j) &=&  P_{n'} (u_0; j) - a_{n+1} u_0 P_n  (u_0; j) - a_{n+1} R_n  (u_0 )
 \\
 &=& P_{n} (u_0; j) (j - n - 1 - a_{n+1} u_0) - a_{n+1} R_n  (u_0 ).
\end{eqnarray*}
Finally, 3) can be obtained by directly substituting the values of $u_0$ into the equality in 2).

\end{proof}

%\begin{remark}
%For example, $P_1(-1,j) = $ and $P_2(-1,j) = $.
%\end{remark}

\begin{remark}
The above method can  be used to get a similar relation between the indicial equations of   $H(y,y',\cdots,y^{(n)})=0$ and   $\dfrac{d H}{d z}=0$, where $y = y(z)$ and $H$ is a polynomial in $y$ and its derivatives.
\end{remark}

\noindent {\it Proof} of Theorem \ref{mero sol of general factorizable equation}.
Let $L_i = \{(a_1, a_2, \dots, a_n ) \in \C^n| a_i = 0 \}, i =1, 2 \dots, n$. Then for any $ \mathbf{a} \in \C^n \backslash \left( \cup_{i = 1}^n L_i \right)$, one can see immediately that the equation \eqref{deg1-general factorizable equation} has only one top degree term $ (-1)^n \Pi_{k = 1}^n a_k u^{n+1}$ and thus by Wiman-Valiron theory,  we conclude that it does not admit any transcendental entire solution. On the other hand, it is easy to see that the equation  \eqref{deg1-general factorizable equation} does not have any non-constant polynomial solution. Therefore any meromorphic solution of the ODE  \eqref{deg1-general factorizable equation} has at least one pole on $\C$. Then by Lemma \ref{order+lead coe-general factorizable equation}, any meromorphic solution  of \eqref{deg1-general factorizable equation} with a pole at $z = z_0 \in \C$ can be expressed as $u(z) =  \sum_{r = 0}^{\infty} u_r (z-z_0)^{r-1},u_0 \not = 0$.

Now  for any $1 \leq k \leq n, j   \in \N \cup \{0\}$, we define
\begin{eqnarray*}
H_{k, j} := \{ (a_1, a_2, \dots, a_n ) \in \C^n \backslash \left( \cup_{i = 1}^n L_i \right) | a_k^n P_{n} (u_0 ; j) = 0, u_0 = - \frac{k}{a_k}\},
\end{eqnarray*}
where  $P_{n} (u_0; j)$ is the indicial equation of \eqref{deg1-general factorizable equation} .
From Proposition \ref{relations on indicial euqations}-3), one can easily check by induction that $H_{k, j} \not \equiv \C^n \backslash \left( \cup_{i = 1}^n L_i \right)$ and thus it defines a hypersurface in $\C^n $.
Next, we define
\begin{eqnarray*}
S: = \left( \bigcup_{i = 1}^n L_i \right) \bigcup \left( { \bigcup_{\substack{1 \leq k \leq n,\\ j   \in \N \cup \{0\}}  } } H_{k, j}  \right),
\end{eqnarray*}
then according to the method used in \cite{Conte2010Ng,Eremenko2006}, for any $\mathbf{a} \in \C^n \backslash S$, since the equation \eqref{deg1-general factorizable equation}  does not have any  nonnegative integer Fuchs index for any $u_0 \in \{- \frac{k}{z_k} | k = 1,2, \dots, n\} $, all meromorphic solutions of the equation \eqref{deg1-general factorizable equation}  belong to the class $W$.

\section{Proof of Theorem \ref{solutions to 2-factorizble equations}}

We first recall some lemmas that will be needed.
\begin{lemma}\cite[p.~210]{Chuang1990Yang} \label{Growth estimate1-factorizabl ODEs}
Let $f$ and $g$ be two transcendental entire functions. Then
\begin{eqnarray*}
\lim_{r \rightarrow \infty} \dfrac{\log M(r, f \circ g)}{\log M(r, f)} = \infty, \quad \lim_{r \rightarrow \infty} \dfrac{T (r, f \circ g)}{  T(r, f)} = \infty, \\
\lim_{r \rightarrow \infty} \dfrac{\log M(r, f \circ g)}{\log M(r, g)} = \infty, \quad \lim_{r \rightarrow \infty} \dfrac{T (r, f \circ g)}{  T(r, g)} = \infty.
\end{eqnarray*}

\end{lemma}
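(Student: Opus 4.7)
The plan is to reduce the statement to the classical P\'olya composition inequality
\[
M(r, f \circ g) \;\geq\; M\!\left(\tfrac{1}{8}\, M(r/2, g),\, f\right)
\qquad (r \text{ large},\ g(0)=0),
\]
whose proof is the Bloch--Landau fact that $g(\{|z|<r\})$ contains a disc of radius $\tfrac18 M(r/2,g)$ about the origin (one first translates so that $g(0)=0$, which changes none of the growth quantities). Writing $\phi_f(t):=\log M(e^t,f)$ and $\phi_g(t):=\log M(e^t,g)$, both are convex and non-decreasing, and the hypothesis that $f$ and $g$ are transcendental entire is equivalent to $\phi_f(t)/t,\phi_g(t)/t\to\infty$ (since an entire function with $\log M(r,\cdot)=O(\log r)$ is forced to be a polynomial). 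Taking logarithms in P\'olya yields
\[
\log M(r, f\circ g)\;\geq\;\phi_f\!\bigl(\phi_g(\log r-\log 2)-\log 8\bigr).
\]

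For the ratio $\log M(r,f\circ g)/\log M(r,f)$ I will use transcendence of $g$: the argument of $\phi_f$ on the right then exceeds $N\log r$ for any $N$ and all $r$ large. A short convexity computation --- from $\phi_f(t)\leq\phi_f(0)+t\phi_f'(t)$ one deduces $\phi_f(Nt)\geq\tfrac{N+1}{2}\phi_f(t)$ once $\phi_f(t)/t$ is large --- then forces this ratio to exceed $(N+1)/2$ eventually, and since $N$ is arbitrary the limit equals $+\infty$. For the symmetric ratio $\log M(r,f\circ g)/\log M(r,g)$ I instead exploit transcendence of $f$ in the form $\phi_f(s)/s\to\infty$: for every $K$ and every sufficiently large $r$,
\[
\log M(r,f\circ g)\;\geq\;K\bigl(\phi_g(\log r-\log 2)-\log 8\bigr).
\]
A standard value-distribution refinement of P\'olya (namely $T(r,f\circ g)\geq T(M(r,g),f)(1-o(1))$, which appears in Chuang--Yang) is then used to upgrade this to a bound that dominates $\phi_g(\log r)$ as $K\to\infty$. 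Finally, the two $T$-versions follow immediately from the entire-function comparison $T(r,h)\leq \log^+ M(r,h)\leq 3\,T(2r,h)$ applied to $h\in\{f,g,f\circ g\}$, since the constant factors are absorbed once the ratios already diverge.

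The main obstacle is the pair of limits with $g$ in the denominator. The naive lower bound $\log M(r,f\circ g)\geq \tfrac12 \log M(r/2,g)$ coming directly from P\'olya is \emph{not} sufficient, because for rapidly growing $g$ (e.g.\ $g(z)=\exp\exp z$) the ratio $\log M(r/2,g)/\log M(r,g)$ itself decays to zero, and multiplying by a fixed constant cannot repair the deficit. The resolution is to use the value-distribution-based sharpening $T(r,f\circ g)\geq T(M(r,g),f)(1-o(1))$: combined with $T(r,g)\leq \log M(r,g)$ and transcendence of $f$ (so that $T(s,f)\geq K\log s$ for every $K$ and $s$ large), this yields $T(r,f\circ g)/T(r,g)\geq K(1-o(1))$ for every $K$, whence the ratio diverges. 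Everything else is routine bookkeeping with the standard P\'olya--Nevanlinna toolkit.
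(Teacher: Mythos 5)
The paper itself offers no proof of this lemma: it is quoted as a known result from Chuang--Yang (p.~210), so the only issue is whether your argument is sound. The half of your proposal with $f$ in the denominator is fine: P\'olya's inequality $M(r,f\circ g)\ge M\bigl(\tfrac18 M(r/2,g),f\bigr)$, convexity of $t\mapsto\log M(e^t,f)$, transcendence of $g$, and the comparison $T(r,h)\le\log^+M(r,h)\le 3\,T(2r,h)$ do give the two limits with $\log M(r,f)$ and $T(r,f)$ below (the normalization $g(0)=0$ should be accompanied by replacing $f(w)$ by $f(w+g(0))$, but that is harmless). The genuine gap is in the other half. The ``standard value-distribution refinement'' you invoke, $T(r,f\circ g)\ge(1-o(1))\,T(M(r,g),f)$, is false and does not appear in Chuang--Yang: for $f=g=\exp$ one has $T(r,f\circ g)\sim e^{r}/(2\pi^{3}r)^{1/2}$ while $T(M(r,g),f)=e^{r}/\pi$, so the ratio tends to $0$; indeed no inequality $T(r,f\circ g)\ge c\,T(M(r,g),f)$ with a constant $c>0$ can hold. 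All correct lower bounds of P\'olya--Clunie type carry a strictly smaller radius inside, e.g.\ $T(r,f\circ g)\ge\tfrac13\log M\bigl(\tfrac18M(r/4,g)+o(1),f\bigr)$, and, as you yourself observed, such bounds cannot dominate $T(r,g)$ or $\log M(r,g)$ when $g$ grows like $\exp\exp$. Hence the two limits with $g$ in the denominator are not established by your argument.

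The classical route to those limits (Clunie's argument, which is what the cited page rests on) uses the value distribution of $f$ rather than its maximum modulus: since $f$ is transcendental, for every $m\in\N$ there is a value $a$ assumed by $f$ at $m$ distinct points $b_1,\dots,b_m$; every zero of $g-b_i$ is a zero of $f\circ g-a$, so $N\bigl(r,1/(f\circ g-a)\bigr)\ge\sum_{i=1}^m N\bigl(r,1/(g-b_i)\bigr)$, while the second fundamental theorem applied to $g$ gives $\sum_{i=1}^m N\bigl(r,1/(g-b_i)\bigr)\ge(m-2)T(r,g)-S(r,g)$. Combined with the first fundamental theorem this yields $T(r,f\circ g)\ge(m-2-o(1))\,T(r,g)$ outside a set of finite measure, and since $m$ is arbitrary and the characteristics are increasing, the exceptional set is removed by the same device as in Lemma~\ref{growth comparsion--linear measure-factorizable ODEs}. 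The maximum-modulus ratio with $g$ downstairs likewise requires a value-distribution (or Wiman--Valiron type) input and does not follow from radius-shrunken P\'olya bounds alone. Until you replace the false composition inequality by an argument of this kind, your proposal proves only half of the lemma.
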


\begin{lemma} \label{solution of Fisher equation with degree 2-factoriazble ODEs}
The equation
 \begin{eqnarray} \label{Fisher equation-factoriazble ODEs}
w''(z)  + c w'(z)-\frac{6}{\lambda } \left(w(z)-e_1\right) \left(w(z)-e_2\right)= 0, \lambda \not = 0
\end{eqnarray}
 has non-constant meromorphic solutions if and only if
 $$c  \left(c^2 \lambda +25 e_1-25 e_2\right) \left(c^2 \lambda -25 e_1+25 e_2\right) = 0$$ and they are given respectively as follows
\end{lemma}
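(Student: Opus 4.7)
The proof plan follows the Painlev\'{e}-analysis framework set up in the previous section, combined with explicit construction of solutions.

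\emph{Necessity.} I would first rule out nonconstant entire solutions. The only top-degree monomial in the ODE is $-\frac{6}{\lambda} w^2$, and Wiman--Valiron theory applied to this term prevents any transcendental entire solution; non-constant polynomial solutions are excluded by comparing degrees. Hence every nonconstant meromorphic solution $w$ has at least one pole $z_0 \in \mathbb{C}$. A dominant balance of $w''$ against $-\frac{6}{\lambda} w^2$ forces the leading Laurent behaviour $w(z) \sim \lambda/(z-z_0)^2$, and by \eqref{indicial equation} the indicial polynomial is $P(\lambda;j) = (j+1)(j-6)$; the only positive integer Fuchs index is therefore $j = 6$.

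I would then substitute $w(z) = \sum_{k \geq 0} u_k (z-z_0)^{k-2}$ with $u_0 = \lambda$ into the ODE. For $k = 1,\ldots,5$ the factor $(k+1)(k-6)$ is nonzero, so $u_k$ is uniquely determined as a polynomial in $c, \lambda, e_1, e_2$. At $k = 6$ the coefficient of $u_6$ vanishes, and the recursion degenerates to a compatibility condition $\Phi_6(c,\lambda,e_1,e_2) = 0$. A direct symbolic computation shows that $\Phi_6$ is a nonzero scalar multiple of $c\bigl(c^2\lambda + 25(e_1-e_2)\bigr)\bigl(c^2\lambda - 25(e_1-e_2)\bigr)$. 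Together with the absence of entire solutions, this proves that nonconstant meromorphic solutions can exist only when the stated product vanishes.

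\emph{Sufficiency.} I would then exhibit explicit meromorphic solutions in each of the three subcases. When $c = 0$, multiplying by $w'$ and integrating yields the first integral
\begin{equation*}
(w')^2 = \tfrac{4}{\lambda} w^3 - \tfrac{6(e_1+e_2)}{\lambda} w^2 + \tfrac{12 e_1 e_2}{\lambda} w + K;
\end{equation*}
the substitution $w = \lambda \wp(z-z_0;g_2,g_3) + (e_1+e_2)/2$ converts this into the standard Weierstrass equation with $g_2 = 3(e_1-e_2)^2/\lambda^2$ and $g_3$ determined by $K$, yielding a two-parameter elliptic family. When $c^2 \lambda = \pm 25(e_1-e_2)$, I would insert the ansatz
\begin{equation*}
w(z) = \frac{\lambda \mu^2}{(1 + \beta e^{\mu z})^2} + \gamma
\end{equation*}
and match the coefficients of $(1+\beta e^{\mu z})^{-k}$ for $k = 2,3,4$ against the constant term; this forces $\mu = c/5$, $\gamma \in \{e_1,e_2\}$, and the constraint $c^2\lambda = \pm 25(e_1-e_2)$ with the sign selected by the choice of $\gamma$. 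The parameter $\beta$ is free, so each sign gives a one-parameter family of meromorphic solutions whose double poles lie along the zero set of $1 + \beta e^{\mu z}$.

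\emph{Main obstacle.} The principal difficulty is the resonance calculation at $j = 6$: the formulas for $u_1,\ldots,u_5$ grow in length, and collecting $\Phi_6$ into the claimed factored form $c\bigl(c^2\lambda + 25(e_1-e_2)\bigr)\bigl(c^2\lambda - 25(e_1-e_2)\bigr)$ requires careful symbolic bookkeeping. Once this is achieved, the sufficiency direction reduces to a direct substitution check together with classical Weierstrass theory, and the lemma follows.
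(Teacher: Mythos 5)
Your necessity argument follows the standard Painlev\'e-test route (no nonconstant entire solutions by Wiman--Valiron, double pole with $u_0=\lambda$, Fuchs indices $-1$ and $6$, compatibility condition at the resonance $j=6$), and this is indeed where the factor $c\left(c^2\lambda+25e_1-25e_2\right)\left(c^2\lambda-25e_1+25e_2\right)$ comes from; modulo the symbolic bookkeeping you acknowledge, that half is sound. (For what it is worth, the paper does not prove this lemma at all: it states the two solution families and cites Ablowitz--Zeppetella and Kudryashov, so your plan is more ambitious than the paper's own treatment.) The $c=0$ case is also fine: the first integral plus the Weierstrass normalization gives the full two-parameter family $w_1$.

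The genuine gap is in the $c\neq 0$ case. The lemma asserts that the \emph{general} solution there is $w_2(z)=(e_i-e_j)\,e^{-2cz/5}\,\wp\!\left(e^{-cz/5}-\zeta_0;0,g_3\right)+e_j$ with \emph{both} $\zeta_0$ and $g_3$ arbitrary; for $g_3\neq 0$ these are genuinely elliptic functions composed with an exponential. Your ansatz $w=\lambda\mu^2/(1+\beta e^{\mu z})^2+\gamma$ reproduces only the degenerate members $g_3=0$ (where $\wp(\zeta;0,0)=\zeta^{-2}$), so it neither verifies the stated formula nor addresses the completeness claim ``they are given respectively as follows''; in fact it misses infinitely many meromorphic solutions of the equation, so the claim that ``each sign gives a one-parameter family'' understates even what exists. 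To close the gap, set $\zeta=e^{-cz/5}$ and $w=e_j+(e_i-e_j)e^{-2cz/5}V(\zeta)$: under $c^2\lambda=25(e_i-e_j)$ the $z$-dependent terms cancel and one gets the autonomous equation $V''=6V^2$ in $\zeta$, whose first integral $(V')^2=4V^3-g_3$ is a Briot--Bouquet equation with general meromorphic solution $V=\wp(\zeta-\zeta_0;0,g_3)$; this produces exactly $w_2$. Completeness then needs an extra argument, e.g.\ parameter counting plus local uniqueness: the family $w_2$ realizes an arbitrary pole location and an arbitrary value of the free coefficient at the Fuchs index $6$, and since a solution is determined by its Laurent data at a pole, every nonconstant meromorphic solution (which must have a pole, by your Wiman--Valiron step) coincides with a member of the family. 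Without some such argument the ``only these'' part of the lemma remains unproved.
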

\begin{itemize}
\item [1)] if $c = 0$, then the general solution to the equation \eqref{Fisher equation-factoriazble ODEs} is
\begin{equation*} \label{solution of Fisher equation with degree 2-1-factoriazble ODEs}
w_1(z) = \lambda  \wp \left(z - z_0;g_2,g_3\right) + \frac{1}{2} \left(  e_1+  e_2\right),
\end{equation*}
where $g_2 = \frac{3 \left(e_1-e_2\right){}^2}{\lambda ^2}$ and $z_0, g_3$ are arbitrary.

\item [2)] if $c^2 \lambda  = 25 (e_i - e_j) \not = 0, i, j \in \{1, 2\}$, then the general solution to the equation \eqref{Fisher equation-factoriazble ODEs} is
    \begin{eqnarray*} \label{solution of Fisher equation with degree 2-2-factoriazble ODEs}
w_2(z) &=& (e_i - e_j) e^{  \frac{- 2 c }{5} z} \wp\left( e^{ \frac{- c   }{5} z  } - \zeta_0; 0, g_3 \right) + e_j,
\end{eqnarray*}
where $\zeta_0, g_3 \in \C$ are arbitrary, see \cite{Ablowitz1979Zeppetella,Kudryashov2010}.
\end{itemize}

\begin{lemma} \cite[p.~53]{Laine1993} \label{solutions on general linear ODEs}
All solutions of the linear differential equation
\begin{eqnarray} \label{solutions to linear DEs}
L(f) := f^{(n)} + \alpha_{n-1}(z) f^{(n - 1)} + \cdots + \alpha_0(z) f  = 0
\end{eqnarray}
with entire coefficients $\alpha_0(z), \dots, \alpha_n(z)$, are entire functions.
\end{lemma}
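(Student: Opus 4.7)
The plan is to prove local existence of analytic solutions at every point of $\C$ and then use the simple-connectedness of $\C$ (via monodromy) to show that each local solution extends to a global, single-valued holomorphic function.

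First, I would reduce the scalar $n$-th order equation \eqref{solutions to linear DEs} to the equivalent first-order vector system $\mathbf{y}' = A(z)\mathbf{y}$, where the companion matrix $A(z)$ has entries which are either entire functions (the $-\alpha_j(z)$'s in the last row) or constants ($0$'s and $1$'s). Since each $\alpha_j$ is entire, $A(z)$ is an entire matrix-valued function on $\C$. Fix any $z_0\in\C$ and any initial data $\mathbf{y}(z_0)=\mathbf{y}_0\in\C^n$.

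Next, I would establish a holomorphic solution on an arbitrarily large disk $\overline{D(z_0,R)}$ by Picard iteration. Define $\mathbf{y}^{(0)}(z)\equiv\mathbf{y}_0$ and $\mathbf{y}^{(k+1)}(z)=\mathbf{y}_0+\int_{z_0}^z A(\zeta)\mathbf{y}^{(k)}(\zeta)\,d\zeta$, where the integral is along the line segment from $z_0$ to $z$. Since $A$ is continuous on the compact disk, $M_R:=\sup_{|z-z_0|\le R}\|A(z)\|<\infty$, and an elementary induction gives $\|\mathbf{y}^{(k+1)}-\mathbf{y}^{(k)}\|\le \|\mathbf{y}_0\|\,(M_R|z-z_0|)^{k+1}/(k+1)!$, so the iterates converge uniformly on $\overline{D(z_0,R)}$. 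Each iterate is holomorphic (Morera plus uniform convergence of entire integrands), hence so is the limit $\mathbf{y}(z)$, which satisfies the integral equation and therefore the ODE on $D(z_0,R)$. Since $R$ was arbitrary, a single application yields a holomorphic solution on all of $\C$; equivalently, one may view this as constructing a local solution and then continuing it analytically along every path in $\C$, noting that continuation cannot fail at any point since $A$ has no singularities. By the monodromy theorem applied to the simply connected domain $\C$, the continuation is single-valued.

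Finally, reading off the first coordinate of $\mathbf{y}(z)$ recovers a solution $f$ of \eqref{solutions to linear DEs} which is entire, and every solution arises this way by prescribing $(f(z_0),f'(z_0),\dots,f^{(n-1)}(z_0))$. The main (minor) obstacle is the uniform-on-compacta convergence estimate for the Picard iterates; once $M_R<\infty$ is in hand, the geometric-factorial bound is routine. No subtle obstruction arises here precisely because the coefficients are assumed entire, which eliminates any movable singularity issue that would otherwise complicate analytic continuation.
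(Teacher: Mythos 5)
Your proposal is correct: the reduction to a companion first-order system, Picard iteration with the factorial bound on compact disks, and the observation that the entire coefficients preclude any singularity of the solution is the standard argument for this classical fact. The paper itself gives no proof, citing the lemma to Laine's book, where essentially this same existence-plus-analytic-continuation argument is carried out, so your route coincides with the cited one.
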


\noindent{\it Proof} of Theorem \ref{solutions to 2-factorizble equations}.

\noindent Expanding \eqref{decomposition-2factors-1} gives
\begin{equation} \label{decomposition-2factors-2}
 u'' + u' \left(\alpha  a_1+\left(-2 a_1-a_2\right) u - b_1 - b_2\right)+ (u - \alpha ) \left(a_1 u+b_1\right) \left(a_2 u+b_2\right) = 0.
\end{equation}
%\begin{eqnarray} \label{decomposition-2factors-2}
%&& u'' + u' \left(\alpha  a_1+\left(-2 a_1-a_2\right) u - b_1 - b_2\right)+ a_1 a_2 u^3 + \nonumber
%\\
%&&u^2 \left(-\alpha  a_1 a_2+a_2 b_1+a_1 b_2\right) + u \left(-\alpha  a_2 b_1-\alpha  a_1 b_2+b_2 b_1\right)-\alpha  b_1 b_2 = 0.
%\end{eqnarray}
Before presenting the complete analysis of   meromorphic solutions of the ODE \eqref{decomposition-2factors-1}, we give a simple observation to derive some of its particular meromorphic solutions.  Let $G(z) = [D - a_1 u - b_1] (u - \alpha)$, then we have $[D - a_2 u  -b_2] G(z) = 0$ from which one can solve for $G(z) = \beta e^{\int a_2 u dz} e^{b_2 z}  , \beta \in \C$. If $\beta = 0$, from the Riccati equation $G(z) = [D - a_1 u - b_1] (u - \alpha) = 0$,
we are able to obtain the particular meromorphic solution  \eqref{decomposition-2factors-paricular solution} of the equation \eqref{decomposition-2factors-1}.

 For $\beta \not = 0$, in order to characterize all the meromorphic solutions of \eqref{decomposition-2factors-1}, we distinguish the following cases according to the values of $a_i $ and $ b_i, i = 1,2$.

%
%  \begin{center}
%\begin{tabular}{ l l  l   } % centered columns (4 columns)
% (1)  $  a_1  a_2 \not = 0,  2 a_1 + a_2 \not = 0 $, &   (2)  $  a_1  a_2 \not = 0,  2 a_1 + a_2   = 0$, & (3)  $ a_1   = 0,    a_2  \not = 0,  b_1 \not = 0, $
%\\
%   (4) $a_1 \not = 0, a_2   = 0,      b_2 \not = 0$, &
%(5)   other cases.   &
%\end{tabular}
%\end{center}

% \begin{center}
\begin{tabular}{ l l  l l   } % centered columns (4 columns)
 (I) & $  a_1  a_2 \not = 0,  2 a_1 + a_2 \not = 0 $, &   (II)  & $  a_1  a_2 \not = 0,  2 a_1 + a_2   = 0$,
\\
 (III) &  $ a_1   = 0,    a_2  \not = 0,  b_1 \not = 0, $ &   (IV) & $a_1 \not = 0, a_2   = 0,      b_2 \not = 0$,
\\
(V) &   other cases.   &
\end{tabular}
%\end{center}

%\begin{center}
%\begin{tabular}{ l l     } % centered columns (4 columns)
% (1)  $  a_1  a_2 \not = 0,  2 a_1 + a_2 \not = 0 $, &   (2)  $  a_1  a_2 \not = 0,  2 a_1 + a_2   = 0$,
%\\
%(3)  $ a_1   = 0,    a_2  \not = 0,  b_1 \not = 0 $,  &   (4) $a_1 \not = 0, a_2   = 0,      b_2 \not = 0$,
%\\
%(5)   other cases.   &
%\end{tabular}
%\end{center}
%\begin{eqnarray*}
%&(1) &  a_1  a_2 \not = 0,  2 a_1 + a_2 \not = 0,    (2) \quad  a_1  a_2 \not = 0,  2 a_1 + a_2   = 0,
%\\
%&(3)&   a_1   = 0,    a_2  b_1 \not = 0,   \qquad \; \; (4) \quad  a_2   = 0,    a_1  b_2 \not = 0,
%\\
%&(5)&  \text { other cases  }
% \end{eqnarray*}

% \begin{sidewaystable}[htpb]
%\newcommand{\tabincell}[2]{\begin{tabular}{@{}#1@{}}#2\end{tabular}}
%\caption{Meromorphic Solutions of ODE  \eqref{decomposition-2factors-1}.

%\label{table: 2nd order factorizable-1} % is used to refer this table in the text
%\end{sidewaystable}

%
% $$
% \begin{matrix}
%&(1) \;  a_1  a_2 \not = 0,  2 a_1 + a_2 \not = 0, & (2)\;  a_1  a_2 \not = 0,  2 a_1 + a_2   = 0,
%\\
%&(3) \;  a_1   = 0,    a_2  b_1 \not = 0, & (4) \;  a_2   = 0,    a_1  b_2 \not = 0,
%\\
%&(5) \;  other cases &
% \end{matrix}
% $$
%\begin{enumerate}

% case1
%\item
(I) $ a_1  a_2 \not = 0,  2 a_1 + a_2 \not = 0$.

For the convenience of applications, we first compare equation \eqref{decomposition-2factors-2} with the following second order ODE
\begin{equation} \label{decomposition-2factors-3}
\frac{1}{8} \left(k^2-d^2\right) \left(u-\alpha_1\right) \left(u-\alpha_2\right) \left(u-\alpha_3\right)+k (u-b) u'+u'' = 0 , k  \not = 0, k^2-d^2 \not = 0.
\end{equation}
One can see immediately that the ODE   \eqref{decomposition-2factors-2} is a special case of \eqref{decomposition-2factors-3} and further calculations imply that the ODE \eqref{decomposition-2factors-3} can be written in the form of \eqref{decomposition-2factors-1} if and only if the coefficients involved in \eqref{decomposition-2factors-3} satisfy either one of the following two conditions
\begin{eqnarray*} \label{decomposition-2factors-4}
%\prod \left[\left(a_i+a_j\right) (k \pm d)+2 a_n (k \mp d)-4 b k \right] = 0,
%\\
\prod \left[\left(\alpha_i+\alpha_j\right) (k + d)+2 \alpha_{k'} (k - d)-4 b k \right] = 0,
\\
\prod \left[\left(\alpha_i+\alpha_j\right) (k - d)+2 \alpha_{k'} (k + d)-4 b k \right] = 0,
\end{eqnarray*}
where $i, j , k' \in \{1, 2, 3\}$ are distinct  and the product is taken over all the permutations of $(1 2 3)$.
%Identifying the equations \eqref{decomposition-2factors-2} and \eqref{decomposition-2factors-3}   leads to the conditions
%\begin{eqnarray}
%-2 a_1-a_2-k=0, a_1 a_2+\frac{1}{8} \left(d^2-k^2\right)=0,
%\end{eqnarray}

We now come back to the ODE \eqref{decomposition-2factors-1}. Suppose $u(z)$ is a meromorphic solution of the ODE \eqref{decomposition-2factors-1} with a pole at $z = z_0$, W.L.O.G., we may assume $z_0 = 0$ then $u(z) = \sum_{j=p}^{+\infty} u_j z^{j},  - p \in \N, u_p \not = 0$.
Substituting the series expansion of $u(z)$ into \eqref{decomposition-2factors-1}  gives $p = - 1,u_{-1} =  - \frac{1}{a_1}$ or $- \frac{2}{a_2}$
and the Fuchs indices
%\begin{eqnarray*}
%\begin{cases}
%j = -1,  \frac{4 d}{d+k}, u_{-1} =  \frac{4}{k + d},
%\\
%j = -1, \frac{4 d}{d-k} , u_{-1} = \frac{4}{k - d}.
%\end{cases}
%\end{eqnarray*}
\begin{eqnarray*}
\begin{cases}
j = -1,  2- \frac{a_2}{a_1}, u_{-1} =  - \frac{1}{a_1},
\\
j = -1, 2- \frac{4 a_1}{a_2}, u_{-1} = - \frac{2}{a_2}.
\end{cases}
\end{eqnarray*}
We denote by $j_1 =   2- \frac{a_2}{a_1}, j_2 =   2- \frac{4 a_1}{a_2}$, then we have
\begin{eqnarray}  \label{decomposition-2factors-f.i.u}
j_1 = j_2 = 0, \text{or}\, \dfrac{2}{j_1} + \dfrac{2}{j_2} = 1.
\end{eqnarray}
For $G(z) \not \equiv 0$,  we let $H(z) = e^{\int a_2 u dz}$ which satisfies $H'(z) =  a_2  u(z) H(z)$ and
\begin{eqnarray} \label{decomposition-2factors-6}
[D - a_1 u - b_1] (u - \alpha) = \beta e^{b_2 z}   H(z), \beta \not = 0,
 \end{eqnarray}
hence, $u(z) $ is meromorphic if and only if $H(z)$ is meromorphic.
%Without loss of generality, we may assume $b_2 \not = 0$ otherwise we perform a translation in the independent variable $u(z)$.
By the substitution of $u = \dfrac{1}{a_2} \dfrac{H'}{H}$ into \eqref{decomposition-2factors-6}, we have
\begin{equation} \label{decomposition-2factors-7}
 -e^{b_2 z} a_2^2 \beta  H^3  +\alpha  a_2^2 b_1 H^2+a_2 H H''+ (\alpha  a_1 a_2 -a_2 b_1 ) H H'- (a_1 + a_2) \left(H'\right)^2 =0.
\end{equation}

If we let $H(z) = e^{-b_2 z} h(z)$, which implies $u(z) = \frac{h'(z)}{a_2 h(z)}-\frac{b_2}{a_2}$ and $u(z) $ is meromorphic if and only if so is $h(z)$, then the ODE \eqref{decomposition-2factors-7} reduces to
\begin{eqnarray} \label{decomposition-2factors-8}
&&h^2 \left(a_2 b_1-a_1 b_2\right) \left(\alpha  a_2+b_2\right)+h h' \left(a_1 \left(\alpha  a_2+2 b_2\right)-a_2 b_1\right)-a_2^2 \beta h^3+a_2 h h''
\\
&&- \left(a_1 + a_2\right) \left(h'\right)^2=0, \beta \not = 0. \nonumber
\end{eqnarray}
It is obvious that the ODE \eqref{decomposition-2factors-8} does not have any polynomial solutions. By Wiman-Valiron theory \cite[p.~51]{Laine1993}, we can conclude that \eqref{decomposition-2factors-8} does not have any transcendental entire solutions. Hence, each meromorphic solution of the ODE \eqref{decomposition-2factors-8} should have at least one pole on $\C$. Next  suppose $h(z)$ is a meromorphic solution of \eqref{decomposition-2factors-8}, W.L.O.G, we assume that it has  a pole at $z = 0$ and $h(z) = \sum_{j=p}^{+\infty} h_j z^{j}, - p \in \N, h_p \not = 0$.
Now the following cases are distinguished.
\begin{itemize}
% case1-1

 %   {\color{red} Find the solutions for a particular case}
    %, say $a_2 = -3 a_1$, which implies $i_1 = 5, i_2 = \dfrac{10}{3}$, to investigate their pole structures.}

% case1-2
\item [(A)] If both of $j_1$ and $j_2$ are integers, then by solving the Diophantine equation  \eqref{decomposition-2factors-f.i.u}, we have three  choices
\begin{eqnarray*}
\begin{cases}
j_1 = j_2 = 0 \Leftrightarrow a_2 = 2 a_1,
\\
\{j_1,  j_2\} = \{3, 6\} \Leftrightarrow a_2 = - a_1 \, \text{or} \, a_2 =  -4 a_1,
\\
\{j_1,  j_2\} = \{1, -2\} \Leftrightarrow a_2 =  a_1 \, \text{or} \, a_2 =  4 a_1.
\end{cases}
\end{eqnarray*}

%\begin{itemize}

% case1-2-1
%\item [1b-1]
{\it Subcase A0}. If  $a_2 = 2 a_1$, then the ODE \eqref{decomposition-2factors-8} reduces to
\begin{eqnarray} \label{decomposition-2factors-9}
\left(2 b_1-b_2\right) h^2 \left(2 \alpha  a_1+b_2\right)+2 h h' \left(\alpha  a_1-b_1+b_2\right)
\\
-4 a_1 \beta  h^3+2 h h'' -3 \left(h'\right)^2 = 0, \quad \beta \not = 0. \nonumber
\end{eqnarray}
One can see that there does not exist any negative integer $p $ with $ h_p \not = 0$ such that $h(z) = \sum_{j=p}^{+\infty} h_j z^{j} $ satisfies \eqref{decomposition-2factors-9} and therefore in this case all the meromorphic solutions of \eqref{decomposition-2factors-1} are those given in \eqref{decomposition-2factors-paricular solution}.

% case1-2-2
{\it Subcase A1}. For $a_2 = a_1$, the ODE \eqref{decomposition-2factors-8} reduces to
\begin{eqnarray} \label{decomposition-2factors-10}
&&h(z) \left(\alpha  a_1-b_1+2 b_2\right) h'(z)-\left(b_2-b_1\right) h(z)^2 \left(\alpha  a_1+b_2\right) - a_1 \beta  h(z)^3
\\
&&+h(z) h''(z)-2 h'(z)^2 = 0. \nonumber
\end{eqnarray}
Let $h(z) = \dfrac{1}{v(z)}$, then the ODE \eqref{decomposition-2factors-10} reduces to a linear ODE
\begin{equation*} \label{decomposition-2factors-11}
-v''  +\left(-\alpha  a_1+b_1-2 b_2\right) v'+v \left(b_1-b_2\right) \left(\alpha  a_1+b_2\right)  - a_1 \beta = 0,
\end{equation*}
with solutions
\begin{eqnarray*}
v(z) =
\begin{cases}
c_2 e^{  \left(-\alpha  a_1-b_2\right) z}+c_1 e^{\left(b_1-b_2\right) z} + \frac{a_1 \beta }{\left(b_1-b_2\right) \left(\alpha  a_1+b_2\right)}, \prod_{i=1}^2  \left(\alpha  a_1+b_i\right) \left(b_1-b_2\right) \not = 0 %\left(b_1 + \alpha a_1\right) \left(b_1-b_2\right) \left(\alpha  a_1+b_2\right) \not = 0;
\\
\frac{\beta  b_1}{\alpha  \left(b_1-b_2\right){}^2}+e^{\left(b_1-b_2\right) z} \left(c_2 z+c_1\right), b_1 + \alpha a_1 = 0, \left(b_1-b_2\right) \left(\alpha  a_1+b_2\right) \not = 0;
\\
c_2-\frac{c_1 e^{-  \left(\alpha  a_1+b_2\right) z}+a_1 \beta  z}{\alpha  a_1+b_2}, b_1 = b_2, \alpha  a_1+b_2 \not = 0;
\\
\frac{c_1 e^{z \left(\alpha  a_1+b_1\right)}+a_1 \left(\alpha  c_2+\beta  z\right)+b_1 c_2}{\alpha  a_1+b_1}, b_1 \not = b_2, \alpha  a_1+b_2 = 0;
\\
-\frac{1}{2} a_1 \beta  z^2+c_2 z+c_1, b_1 = b_2 = - \alpha  a_1.
\end{cases}
\end{eqnarray*}
%{\color{red} change the classification of the coefficients in the above expression.}

After substitution, we obtain the follow meromorphic solutions of the ODE \eqref{decomposition-2factors-1}
\begin{eqnarray*}
u(z) =
\begin{cases}
\frac{\left(b_1-b_2\right) \left(\alpha  a_1+b_2\right) \left(\alpha  a_1 c_2-b_1 c_1 e^{z \left(\alpha  a_1+b_1\right)}\right)-a_1 \beta  b_2 e^{z \left(\alpha  a_1+b_2\right)}}{a_1 \left(a_1 \left(\alpha  \left(b_1-b_2\right) \left(c_1 e^{z \left(\alpha  a_1+b_1\right)}+c_2\right)+\beta  e^{z \left(\alpha  a_1+b_2\right)}\right)+\left(b_1-b_2\right) b_2 \left(c_1 e^{z \left(\alpha  a_1+b_1\right)}+c_2\right)\right)},
\\
\left(b_1 + \alpha a_1\right) \left(b_1-b_2\right) \left(\alpha  a_1+b_2\right) \not = 0;
\\
\frac{\alpha  \left(\alpha  \left(b_1-b_2\right){}^2 e^{b_1 z} \left(b_1 \left(c_2 z+c_1\right)+c_2\right)+\beta  b_1 b_2 e^{b_2 z}\right)}{b_1 \left(\alpha  \left(b_1-b_2\right){}^2 e^{b_1 z} \left(c_2 z+c_1\right)+\beta  b_1 e^{b_2 z}\right)},
\\
b_1 + \alpha a_1 = 0, \left(b_1-b_2\right) \left(\alpha  a_1+b_2\right) \not = 0;
\\
\frac{e^{z \left(\alpha  a_1+b_2\right)} \left(b_2^2 c_2-a_1 \left(\beta +b_2 \left(\beta  z-\alpha  c_2\right)\right)\right)+\alpha  a_1 c_1}{a_1 \left(e^{z \left(\alpha  a_1+b_2\right)} \left(a_1 \left(\beta  z-\alpha  c_2\right)-b_2 c_2\right)+c_1\right)}, b_1 = b_2, \alpha  a_1+b_2 \not = 0;
\\
\frac{a_1 \left(\alpha  a_1 \left(\alpha  c_2+\beta  z\right)-\beta +\alpha  b_1 c_2\right)-b_1 c_1 e^{z \left(\alpha  a_1+b_1\right)}}{a_1 \left(c_1 e^{z \left(\alpha  a_1+b_1\right)}+a_1 \left(\alpha  c_2+\beta  z\right)+b_1 c_2\right)}, b_1 \not = b_2, \alpha  a_1+b_2 = 0;
\\
\frac{-2 a_1 \left(\alpha  c_1+z \left(\beta +\alpha  c_2\right)\right)+\alpha  a_1^2 \beta  z^2+2 c_2}{a_1 \left(a_1 \beta  z^2-2 \left(c_2 z+c_1\right)\right)}, b_1 = b_2 = - \alpha a_1,
\end{cases}
\end{eqnarray*}
where $\beta, c_1, c_2 \in \C$ are arbitrary.

%{\color{red}
%\begin{remark}
%For $a_1 = a_2$, the solutions have been checked by Mathematica!
%\end{remark}
%}

\begin{remark}
In general, the above $u(z)$ does not belong to the class $W$.
%For $\beta = 0$ and $c_2 a_1 = -1$, the above solution with $\left(b_1 + \alpha a_1\right) \left(b_1-b_2\right) \left(\alpha  a_1+b_2\right) \not = 0$ reduces to \eqref{decomposition-2factors-paricular solution}.
 For $ b_1 = b_2 = \alpha = 0, \beta = 1, a_1 = a_2 = -1$, we recover the general solution $u(z) = \dfrac{1}{z - a} + \dfrac{1}{z - b}, a + b = -2 c_2, ab= 2c_1, c_1, c_2 \in \C$ to the ODE $u'' +3 u u ' + u^3 = 0 $ \cite{Painleve1900,Conte1993FordyPickering}.
\end{remark}

\begin{remark}
It seems that we have three arbitrary constants $\beta, c_1$ and $c_2$ to a second order ODE, but actually the arbitrariness of $\beta$ can be absorbed into $c_1$ and $c_2$.
\end{remark}

%case1-2-3
{\it Subcase A2}. For the case $a_2 = - a_1$, the ODE \eqref{decomposition-2factors-8} reduces to the Fisher equation
\begin{eqnarray} \label{decomposition-2factors-12}
&&h''(z) -\left(-\alpha  a_1+b_1+2 b_2\right) h'(z)-\left(b_1+b_2\right) h(z) \left(\alpha  a_1-b_2\right)
\\
&&+a_1 \beta  h(z)^2 = 0. \nonumber
\end{eqnarray}
According to Lemma \ref{solution of Fisher equation with degree 2-factoriazble ODEs}, the necessary condition for the existence of meromorphic solutions of the ODE \eqref{decomposition-2factors-12} is
\begin{equation*} \label{decomposition-2factors-13}
c  \left(c^2 \lambda +25 e_1-25 e_2\right) \left(c^2 \lambda -25 e_1+25 e_2\right) = 0,
\end{equation*}
where
\begin{equation*} \label{decomposition-2factors-14}
\begin{cases}
c = \alpha  a_1 - b_1 - 2 b_2,\\
\lambda = - \dfrac{6}{a_1 \beta },\\
e_1 = 0, e_2 = \dfrac{\left(b_1+b_2\right)  \left(\alpha  a_1-b_2\right)}{a_1 \beta }.
\end{cases}
\end{equation*}

\begin{itemize}
\item [(i)] If $c = 0$, then the general solution to the equation \eqref{decomposition-2factors-12} is
\begin{equation*}
h(z) = \lambda  \wp \left(z - z_0;g_2,g_3\right) + \frac{1}{2} \left(  e_1+  e_2\right),
\end{equation*}
where $g_2 = \frac{3 \left(e_1-e_2\right){}^2}{\lambda ^2}$ and $z_0, g_3$ are arbitrary.

\item [(ii)] For $c^2 \lambda  = 25 (e_i - e_j) \not = 0, i, j \in \{1, 2\}$, then the general solution to the equation \eqref{decomposition-2factors-12} is
\begin{eqnarray*}
h(z) &=& (e_i - e_j) e^{  \frac{- 2 c }{5} z} \wp\left( e^{ \frac{- c   }{5} z  } - \zeta_0; 0, g_3 \right) + e_j,
\end{eqnarray*}

\end{itemize}

After substitution, we obtain the following meromorphic solutions of the ODE \eqref{decomposition-2factors-1}
\begin{itemize}
\item [(i)] if $a_2 = -a_1, c = 0 $,
\begin{equation*}
u(z) =  \dfrac{12 \wp '\left(z - z_0;g_2,g_3\right)}{a_1 [\left(b_2-\alpha  a_1\right){}^2-12 \wp \left(z - z_0;g_2,g_3\right)]} + \dfrac{b_2}{a_1},
\end{equation*}
where $g_2 = \dfrac{1}{12} \left(b_2-\alpha  a_1\right){}^4$ and $z_0, g_3$ are arbitrary.

\item [(ii)] for $ a_2 = -a_1, c^2 \lambda  = 25 (e_i - e_j) \not = 0, i, j \in \{1, 2\}$,
\begin{eqnarray*}
u(z) = \dfrac{b_2}{a_1}+\frac{c \left(e_i-e_j\right) \left(2 e^{\frac{c z}{5}} \wp \left(e^{-\frac{1}{5} (c z)} - \zeta_0;0,g_3\right)+\wp '\left(e^{-\frac{1}{5} (c z)} - \zeta_0;0,g_3\right)\right)}{a_1 \left[ 5 \left(e_i-e_j\right) e^{\frac{c z}{5}} \wp \left(e^{-\frac{1}{5} (c z)} - \zeta_0;0,g_3\right)+5 e_j e^{\frac{3 c z}{5}}\right]},
\end{eqnarray*}
where $ \zeta_0, g_3$ are arbitrary.

\end{itemize}

\begin{remark}
The above solutions may degenerate to rational functions in exponential or rational functions due to the degeneration of $\wp$.
\end{remark}

%{\color{red}
%\begin{remark}
%For $a_1 = - a_2$, the solutions have been checked by Mathematica!
%\end{remark}
%}

%case1-2-4
{\it Subcase A3}.  If  $a_2 = - 4 a_1$, set $u(z) = w(z) + \alpha$, then the ODE \eqref{decomposition-2factors-1} becomes
\begin{eqnarray}  \label{decomposition-2factors-Case14-1}
[D + 4 a_1 w - b'_2] [D - a_1 w - b'_1] w = 0,
\end{eqnarray}
where $b'_1 = b_1 +  \alpha a_1, b'_2 = b_2 - 4  \alpha a_1$. The compatibility conditions for the existence of meromorphic solutions of \eqref{decomposition-2factors-Case14-1} are $b'_2 = - 2b'_1, 2 b'_1, $ or $ -6 b'_1$.

Note that, under the compatibility conditions, the equation \eqref{decomposition-2factors-Case14-1}   can be factorized into another form
\begin{eqnarray}  \label{decomposition-2factors-Case14-2}
[D + \alpha_1 w - \beta_2] [D - \alpha_1 w - \beta_1] (w - \alpha_0) = 0.
\end{eqnarray}
Here
\begin{eqnarray*}  \label{decomposition-2factors-Case14-3}
\left(
\begin{array}{c}
 \alpha_1 \\
 \beta_1 \\
 \beta_2 \\
 \alpha _0 \\
\end{array}
\right)=\left(
\begin{array}{c}
 -2 a_1 \\
 -2 b'_1 \\
 b'_1 \\
 0 \\
\end{array}
\right), \left(
\begin{array}{c}
 -2 a_1 \\
 b'_1 \\
 2 b'_1 \\
 0 \\
\end{array}
\right)
\text{or}
\left(
\begin{array}{c}
 -2 a_1 \\
 -3 b'_1 \\
 0 \\
 -\frac{b'_1}{a_1} \\
\end{array}
\right)
\end{eqnarray*}
which correspond to $b'_2 = - 2b'_1, 2 b'_1, $ or $ -6 b'_1$ respectively.
\begin{remark}
The ODE \eqref{decomposition-2factors-1} admits more than one distinct factorizations only for some specific cases including \eqref{decomposition-2factors-Case14-1} with the compatibility conditions satisfied.
\end{remark}

Since the equation \eqref{decomposition-2factors-Case14-2} shares the same form as \eqref{decomposition-2factors-1} with $a_2 = - a_1$, one can obtain its meromorphic solutions given below by using the results of  {\it Subcase A2}.

For $b'_2 = - 2b'_1$,
\begin{eqnarray*}  \label{decomposition-2factors-Case14-4}
u(z) = -\frac{12 \wp '\left(z - z_0;g_2,g_3\right)}{2 a_1 \left(\left(\alpha  a_1+b_1\right){}^2-12 \wp \left(z- z_0;g_2,g_3\right)\right)}-\frac{b_1-\alpha  a_1}{2 a_1},
\end{eqnarray*}
where $g_2 = \dfrac{1}{12} \left(b_1 + \alpha  a_1\right){}^4$ and $z_0, g_3$ are arbitrary.

For $b'_2 =  2b'_1 \not = 0$,
\begin{eqnarray*}  \label{decomposition-2factors-Case14-5}
u(z) &=& -\frac{c \left(e_2-e_1\right) \left[2 e^{\frac{c z}{5}} \wp \left(e^{-\frac{1}{5} (c z) } - \zeta_0;0,g_3\right)+\wp '\left(e^{-\frac{1}{5} (c z)} - \zeta_0;0,g_3\right)\right]}{2 a_1 \left[5 \left(e_2-e_1\right) e^{\frac{c z}{5}} \wp \left(e^{-\frac{1}{5} (c z)} - \zeta_0;0,g_3\right)+5 e_1 e^{\frac{3 c z}{5}}\right]}
\\
&&-\frac{b_1}{a_1},
\end{eqnarray*}
where $c = -5 \left(\alpha  a_1+b_1\right) \not = 0, e_1 = 0$, $e_2 = \dfrac{3\left(\alpha  a_1+b_1\right)^2}{a_1 \beta}, $ and $ \beta \not = 0, \zeta_0, g_3$ are arbitrary.

For $b'_2 =  - 6 b'_1 \not = 0$,
\begin{eqnarray*}  \label{decomposition-2factors-Case14-5}
u(z) = \alpha -\frac{c \left(e_2-e_1\right) \left[2 e^{\frac{c z}{5}} \wp \left(e^{-\frac{1}{5} (c z)} - \zeta_0;0,g_3\right)+\wp '\left(e^{-\frac{1}{5} (c z)} - \zeta_0;0,g_3\right)\right]}{2 a_1 \left[5 \left(e_2-e_1\right) e^{\frac{c z}{5}} \wp \left(e^{-\frac{1}{5} (c z)} - \zeta_0;0,g_3\right)+5 e_1 e^{\frac{3 c z}{5}}\right]},
\end{eqnarray*}
where $c =  5 \left(\alpha  a_1+b_1\right) \not = 0, e_1 = 0$, $e_2 = \dfrac{3\left(\alpha  a_1+b_1\right)^2}{a_1 \beta}, $ and $ \beta \not = 0,  \zeta_0, g_3$ are arbitrary.

\begin{remark}
The above solutions may degenerate to rational functions in exponential or rational functions due to the degeneration of $\wp$.
%If $b'_2 = b'_1 = 0$, the solutions are given in \eqref{decomposition-2factors-Case14-4}.
\end{remark}

%{\color{red}
%\begin{remark}
%For $ a_2 = - 4 a_1 $, the solutions have been checked by Mathematica!
%\end{remark}
%}

%case1-2-5
{\it Subcase A4}. If  $a_2 = 4 a_1$, then the ODE \eqref{decomposition-2factors-8} reduces to
\begin{eqnarray*}  \label{decomposition-2factors-Case15-1}
&&2 h(z) \left(2 \alpha  a_1-2 b_1+b_2\right) h'(z)+\left(4 b_1-b_2\right) h(z)^2 \left(4 \alpha  a_1+b_2\right)-16 a_1 \beta  h(z)^3 \nonumber
\\
&&+4 h(z) h''(z)-5 h'(z)^2 = 0,
\end{eqnarray*}
with Fuchs indices $j = -1, 1$ and the compatibility condition
\begin{eqnarray} \label{decomposition-2factors-Case15-11}
 2 \alpha  a_1-2 b_1+b_2 = 0.
\end{eqnarray}
Again, we make the change of variables $u \mapsto u + \alpha, b_1 \mapsto b'_1, b_2 \mapsto b'_2$, where $b'_1 = b_1 +  \alpha a_1, b'_2 = b_2 + 4  \alpha a_1$, then with the compatibility condition $b'_2 = 2  b'_1$ satisfied,
the ODE \eqref{decomposition-2factors-7} reduces to
\begin{equation*} \label{decomposition-2factors-Case15-2}
  - 16 a_1 \beta    e^{2 b'_1 z} H(z)^3- 4 b'_1 H(z) H'(z)+ 4 H(z) H''(z)
 - 5 H'(z)^2 = 0, \quad  \beta \not = 0. %\nonumber
\end{equation*}
For $b'_1 \not = 0$, performing the transformation $H (z) = v(\zeta), \zeta = e^{b'_1 z } $ gives
%\begin{eqnarray} \label{reduction2 of generalized fisher}
%2 q_2^2 v\left(e^{-\frac{q_2 z}{\lambda }}\right) v''\left(e^{-\frac{q_2 z}{\lambda }}\right)-q_2^2 v'\left(e^{-\frac{q_2 z}{\lambda }}\right){}^2-4 \beta  \lambda  v\left(e^{-\frac{q_2 z}{\lambda }}\right){}^3
%\end{eqnarray}
\begin{eqnarray} \label{decomposition-2factors-Case15-3}
\frac{16 a_1 \beta }{(b'_1)^{2}} v^3- 4 v v''+ 5 v'^2 = 0, \, \, ' = \dfrac{d}{d \zeta}.
\end{eqnarray}

%Multiplying $ v'$ on both sides of \eqref{decomposition-2factors-Case15-3}, it becomes
%\begin{eqnarray*} \label{decomposition-2factors-Case15-4}
%4 v v'  \left(v'' - \frac{24 a_1 \beta }{(b'_1)^2} v^2 \right) - 10  v' \left(\dfrac{(v')^2}{2}  - \frac{8 a_1 \beta }{(b'_1)^2} v^3 \right) = 0,
%\end{eqnarray*}
%which can be written as
%\begin{eqnarray} \label{decomposition-2factors-Case15-5}
%4 v  \varphi'(\zeta) +   10v' \varphi(\zeta) = 0,
%\end{eqnarray}
%where $\varphi(\zeta) = \dfrac{(v')^2}{2} - \dfrac{8 a_1 \beta }{b_1^2} v^3.$
%Integrate the ODE \eqref{decomposition-2factors-Case15-5},

Upon integration of \eqref{decomposition-2factors-Case15-3}, we have
\begin{eqnarray*} \label{decomposition-2factors-Case15-6}
c_0 v^5 + \left((v')^2 - \dfrac{16 a_1 \beta }{b_1^2} v^3\right)^2  = 0, c_0 \, \text{arbitrary},
\end{eqnarray*}
which has the general solution
\begin{eqnarray*} \label{decomposition-2factors-Case15-7}
v(\zeta) =
\begin{cases}
\frac{\left( b_1'\right){}^2}{4 a_1 \beta  \left(\zeta + c_1 \right){}^2}, c_0 = 0,
\\
-\frac{256 c_0 ( b_1')^4}{\left(256 a_1 \beta +c_0 [ b_1'\left(\zeta-c_1\right)]{}^2\right){}^2}, c_0 \not = 0,
\end{cases}
c_0, c_1 \, \text{arbitrary}.
\end{eqnarray*}
%where $g_2 = \dfrac{C \beta  \lambda}{2 q_2^2},g_3 = 0, C, \zeta_0 \in \C$.

For $b'_2 = 2 b'_1 = 0$, the ODE \eqref{decomposition-2factors-1} reduces to
\begin{eqnarray}  \label{decomposition-2factors-Case15-7-1}
[D - 4 a_1 u ] [D - a_1 u] u = 0,
\end{eqnarray}
which admits another factorization
\begin{eqnarray*}  \label{decomposition-2factors-Case15-7-2}
[D - \alpha_1 u ] [D - \alpha_1 u] u = 0, \alpha_1 = 2 a_1.
\end{eqnarray*}
The above ODE belongs to {\it Subcase A1}, and hence we can obtain the  following solution of the ODE \eqref{decomposition-2factors-Case15-7-1}
\begin{eqnarray*}
u(z) = -\frac{1}{2 a_1 (z-c_0 )}-\frac{1}{2 a_1 (z-   c_1)}, \, c_0, c_1 \, \text{arbitrary}.
\end{eqnarray*}

After substitution, with \eqref{decomposition-2factors-Case15-11} satisfied, we obtain the meromorphic solutions of the ODE \eqref{decomposition-2factors-1}
\begin{equation*} \label{decomposition-2factors-Case15-8}
u(z) =
\begin{cases}
\alpha -\frac{1}{2 a_1 (z-c_0 )}-\frac{1}{2 a_1 (z-   c_1)}, \alpha a_1 + b_1 = 0,
\\
\alpha -\frac{\left(\alpha  a_1+b_1\right) e^{z \left(\alpha  a_1+b_1\right)}}{2 a_1 \left(e^{z \left(\alpha  a_1+b_1\right)}+c_1\right)}, c_0 = 0, \alpha a_1 + b_1 \not = 0,
\\
\alpha -\frac{c_0 \left(\alpha  a_1+b_1\right){}^3 e^{z \left(\alpha  a_1+b_1\right)} \left(e^{z \left(\alpha  a_1+b_1\right)}-c_1\right)}{a_1 \left(256 a_1 \beta +c_0 \left(\alpha  a_1+b_1\right){}^2 \left(e^{z \left(\alpha  a_1+b_1\right)}-c_1\right){}^2\right)}, c_0 \not = 0, \alpha a_1 + b_1 \not = 0,
\end{cases}
\end{equation*}
where $c_0, c_1$ are arbitrary.

%{\color{red}
%\begin{remark}
%For $ a_2 = 4a_1 \not  = 0$, the solutions have been checked by Mathematica!
%\end{remark}
%}
Again, it seems that we have three arbitrary constants to a second order ODE, but actually the arbitrariness of $\beta$ can be absorbed into $c_1$ and $c_2$.

% case1-3
\item [(B)] If only one of $j_1$ and $j_2$ is an integer, then we should have $a_2 \not = \pm a_1, \pm 4 a_1  $ or $ 2 a_1$.
For the ODE \eqref{decomposition-2factors-8},
% Suppose $h(z)$ is a meromorphic solution of \eqref{decomposition-2factors-8}, W.L.O.G, we assume that it has  a pole at $z = 0$ and
% $h(z) = \sum_{j=p}^{+\infty} h_j z^{j}, p \in \Z^{-}, h_p \not = 0$  then
 one can check that $p = -2, h_{-1} = \frac{2 \left(a_2-2 a_1\right)}{a_2^2 \beta }$ and its Fuchs indices  are $j = - 1,  2-\frac{4 a_1}{a_2} $.

{\it Subcase B1}. If $2-\frac{4a_1}{a_2}   \in \N \cup \{0\}$   and $2-\frac{a_2}{a_1 }  \not \in \Z$, i.e.,
%\N \cup \{0\}
 $2 - \dfrac{4a_1}{a_2}
\not \in \N \backslash \{1, 2, 3, 4, 6\}$,
% $a_2 \not =\dfrac{4 a_1}{ 2 - n}$ for any $n \in \N \backslash \{ 1, 3, 4, 6\}$,
  then the method in \cite{Conte2010Ng,Eremenko2006} fails and so in this case meromorphic solutions are unfortunately {\it not  yet known}.

{\it Subcase B2}. If $2-\frac{4a_1}{a_2} \not \in \N \cup \{0, -2\}$ ($a_2 \not = a_1$), then $h$ belongs to the class $W$ and so is $u$.
%and $2-\frac{a_2}{a_1 }   \in \N \cup \{0\}$
% and $2-\frac{a_2}{a_1}   \in \N \cup \{0\}$

%some compatibility conditions should be satisfied in order that meromorphic solutions exist.

Using the argument used in \cite{Conte2010Ng,Eremenko2006}, one can find all the meromorphic solutions of \eqref{decomposition-2factors-8}
\begin{equation*}  \label{decomposition-2factors-Case1-3-2-1}
h(z) =
\begin{cases}
\frac{2 \left(a_2-2 a_1\right) \left(\alpha  a_1+b_1\right){}^2 e^{2 \left(z-z_0\right) \left(\alpha  a_1+b_1\right)}}{a_2^2 \beta  \left(e^{\left(z-z_0\right) \left(\alpha  a_1+b_1\right)}-1\right){}^2},  b_2 =  2 \alpha  a_1-\alpha  a_2+2 b_1, \alpha  a_1+b_1 \not = 0;
\\
\frac{2 \left(a_2-2 a_1\right) \left(\alpha  a_1+b_1\right){}^2}{a_2^2 \beta  \left(e^{\left(z-z_0\right) \left(\alpha  a_1+b_1\right)}-1\right){}^2}, b_2 = \frac{-2 \alpha  a_1^2-2 a_1 b_1+a_2 b_1}{a_1},  \alpha  a_1+b_1 \not = 0;
\\
-\frac{\left(2 a_1-a_2\right) \left(\alpha  a_1+b_1\right){}^2}{2 a_1^2 \beta  \left(e^{\frac{a_2 \left(z-z_0\right) \left(\alpha  a_1+b_1\right)}{4 a_1}}-e^{-\frac{a_2 \left(z-z_0\right) \left(\alpha  a_1+b_1\right)}{4 a_1}}\right){}^2},  b_2 = \frac{a_2 b_1-\alpha  a_1 a_2}{2 a_1},  \alpha  a_1+b_1 \not = 0;
\\
\frac{2 \left(a_2-2 a_1\right)}{a_2^2 \beta  \left(z-z_0\right){}^2}, b_1 = -\alpha  a_1, b_2 = - \alpha  a_2  .
\end{cases}
\end{equation*}

After substitution,  we obtain the meromorphic solutions of the ODE \eqref{decomposition-2factors-1} in {\it Subcase B2}
%which includes case (a)
\begin{eqnarray*} \label{decomposition-2factors-Case14-8}
u(z) =
\begin{cases}
\frac{-2 \alpha  a_1+\alpha  a_2-2 b_1}{a_2}-\frac{2 \left(\alpha  a_1+b_1\right)}{a_2 \left(e^{\left(z-z_0\right) \left(\alpha  a_1+b_1\right)}-1\right)}, b_2 = 2 \alpha  a_1-\alpha  a_2+2 b_1, \alpha  a_1+b_1 \not = 0,
\\
-\frac{2 \left(\alpha  a_1+b_1\right)}{a_2 \left(e^{\left(z-z_0\right) \left(\alpha  a_1+b_1\right)}-1\right)}-\frac{b_1}{a_1}, b_2 = \frac{-2 \alpha  a_1^2-2 a_1 b_1+a_2 b_1}{a_1},  \alpha  a_1+b_1 \not = 0,
\\
-\frac{\alpha  a_1+b_1 e^{\frac{a_2 \left(z-z_0\right) \left(\alpha  a_1+b_1\right)}{2 a_1}}}{a_1 \left(e^{\frac{a_2 \left(z-z_0\right) \left(\alpha  a_1+b_1\right)}{2 a_1}}-1\right)}, b_2 = \frac{a_2 b_1-\alpha  a_1 a_2}{2 a_1},  \alpha  a_1+b_1 \not = 0,
\\
-\frac{2}{a_2 \left(z - z_0\right)}-\frac{b_2}{a_2}, b_1 = -\alpha  a_1, b_2 = - \alpha  a_2  .
\end{cases}
\end{eqnarray*}
where $z_0$ is arbitrary.

\item [(C)] If neither $j_1$ nor $j_2$ is an integer, then  we conclude that
 all  meromorphic solutions of the ODE \eqref{decomposition-2factors-1} belong to the class $W$ and they can be found by using the same method as that in {\it Subcase B2}.
 % Moreover, they share the same form as the solutions in {\it Subcase B2}.
% case2

\end{itemize}

%\item
(II) $ a_1 a_2 \not   = 0,   a_2 = - 2 a_1$.

We consider a more general ODE which includes the equation \eqref{decomposition-2factors-2} and hence equation \eqref{decomposition-2factors-1}
\begin{eqnarray} \label{KPP}
u''(z)  + c u'(z)-\frac{2 }{\lambda ^2}\left(u(z)-q_1\right) \left(u(z)-q_2\right) \left(u(z)-q_3\right)= 0,
\end{eqnarray}
%and the correspondence among the coefficients in \eqref{decomposition-2factors-1} and \eqref{KPP} is
%  \begin{eqnarray*}
%  \begin{cases}
%  \lambda^2 = \dfrac{1}{a_1^2}  ,
%  c = \alpha a_1 - b_1 -b_2,
%  \\ \{
%  q_1, q_2, q_3 \} = \{\alpha, - \dfrac{b_1}{a_1},  \dfrac{b_2}{2 a_1}\}, \{
%  i, j, k \} = \{1, 2, 3\}
%  \end{cases}
%\end{eqnarray*}
where $ \lambda (\not = 0), c, q_1, q_2, q_3 \in \C$, for which the Fuchs indices are $-1, 4$ and the  compatibility conditions are
\begin{equation} \label{Compatibility Condition-KPP}
\begin{cases}
c \left(c \lambda +q_1+q_2-2 q_3\right) \left(c \lambda +q_1-2 q_2+q_3\right) \left(c \lambda -2 q_1+q_2+q_3\right) = 0, \, \text{if} \, u_{-1} = \lambda,
\\
c \left(c \lambda +2 q_1-q_2-q_3\right) \left(c \lambda -q_1+2 q_2-q_3\right) \left(c \lambda -q_1-q_2+2 q_3\right), \, \text{if} \, u_{-1} = -\lambda .
\end{cases}
\end{equation}

%Suppose equation \eqref{KPP} could be decomposed into the form

 Now we compare the ODE \eqref{KPP} with the equation \eqref{decomposition-2factors-1}. One can check that the compatibility conditions \eqref{Compatibility Condition-KPP} hold  if and only if   $c = 0$ or
  \begin{eqnarray*}
  \begin{cases}
  \lambda^2 = \dfrac{1}{a_1^2}  ,
  c = \alpha a_1 - b_1 -b_2 \not = 0,
  \\ \{
  q_1, q_2, q_3 \} = \{\alpha, - \dfrac{b_1}{a_1},  \dfrac{b_2}{2 a_1}\}.
%  \{  i, j, k \} = \{1, 2, 3\}.
  \end{cases}
\end{eqnarray*}
In other words, the compatibility conditions \eqref{Compatibility Condition-KPP} hold  if and only if   $c = 0$ or the equation \eqref{KPP} can be factorized into the form \eqref{decomposition-2factors-1}.

% For $c = 0$, the decomposition holds if and only if $2q_i - q_j - q_k = 0$ while meromorphic solutions exists for any choice of $q_i, a_j, q_k$ in this case.

 If $c = 0$, then the ODE \eqref{KPP} reduces to a first order Briot-Bouquet differential equation through multiplying it by $u'$ and performing an integration.
 Therefore all its meromorphic solutions  belong to the class $W$ and can be found by using the method introduced in \cite{Conte2010Ng,Eremenko2006}.

 For $c\not = 0$ and assuming $\eqref{Compatibility Condition-KPP}$ from now on, due to the symmetry in \eqref{Compatibility Condition-KPP} and the fact that $u(z)$ has at least one pole in $\C$, it suffices to consider the case $c = \dfrac{-q_1+2 q_2-q_3}{\lambda } \not = 0$ and one  choice for $a_i, b_i, i = 1, 2$ and $\alpha$ is
 \begin{eqnarray*}  \label{KPP-parameters of the decomposition}
a_1 = -\frac{1}{\lambda },  a_2 = \frac{2}{\lambda }, b_1 = \frac{q_3}{\lambda }, b_2 = -\frac{2 q_2}{\lambda },\alpha = q_1.
% \begin{cases}
% a_1 = -1, a_2 = 2, b_1 = q_3,  b_2 = -2 q_2, \alpha =  q_1, \\
%  a_1 = -1, a_2 = 2, b_1 = q_1,  b_2 = -2 q_3, \alpha = q_2, \\
% a_1 = -1, a_2 = 2, b_1 = q_2,  b_2 = -2 q_1, \alpha  = q_3.
% \end{cases}
\end{eqnarray*}

%satisfies \eqref{Compatibility Condition-KPP} with $u_{-1} = 1$

%As a consequence, if $c = \dfrac{-q_1+2 q_2-q_3}{\lambda } \not = 0$, then \eqref{KPP} can be written as
%\begin{eqnarray} \label{decomposition3}
%[D - \frac{2  }{\lambda }u -b_2] [D + \frac{u  }{\lambda } - b_1] (u - \alpha) = 0,
%\end{eqnarray}

Define $G(z), H(z), h(z)$ in the same way as in the case 1, that is $u = \dfrac{\lambda}{2} \dfrac{H'}{H}$, $ H(z) = e^{-b_2 z} h(z) $.
Then we have $u$ is meromorphic $\Leftrightarrow H$ is meromorphic $\Leftrightarrow  h$ is meromorphic.

%Let $G(z) = [D + \dfrac{u  }{\lambda } - b_1] (u - \alpha)$, then we have $[D - \dfrac{2  }{\lambda }u -b_2] G(z) = 0$ from which one can solve for $G(z) = \beta e^{\int \frac{2  }{\lambda } u dz} e^{b_2 z}  , \beta \in \C$. If $\beta = 0$, from $G(z) = [D + \frac{u  }{\lambda } - b_1] (u - \alpha) = 0$,
%we are able to obtain the first family of meromorphic solutions of the ODE \eqref{KPP}
%%and they are given by
%\begin{eqnarray} \label{pargicular solution-KPP}
%u(z) = \dfrac{q_1 e^{\frac{q_1 \left(z-z_0\right)}{\pm \lambda }}-q_3 e^{\frac{q_3 \left(z-z_0\right)}{ \pm \lambda }}}{e^{\frac{q_1 \left(z-z_0\right)}{ \pm \lambda }}-e^{\frac{q_3 \left(z-z_0\right)}{ \pm \lambda }}}, \, \, z_0 \in \C.
%\end{eqnarray}

For $\beta \not = 0$,
%by the substitution of $u = \dfrac{\lambda}{2} \dfrac{H'}{H}$ and \eqref{parameters of the decomposition} into \eqref{KPP},
using the same argument as in the case 1, we have
\begin{equation} \label{KPP-H}
-2 q_3  H H'+ \dfrac{4 q_1 q_3}{\lambda} H^2-4 \beta  e^{\frac{- 2 q_2 }{\lambda} z} H^3+2 \lambda  H H''-2 q_1  H H'-\lambda  H'^2=0,
\end{equation}
%\begin{eqnarray} \label{decomposition5}
%-2 b_1 \lambda  H(z) H'(z)+4 \alpha  b_1 H(z)^2-4 \beta  e^{b_2 z} H(z)^3+2 \lambda  H(z) H''(z)-2 \alpha  H(z) H'(z)-\lambda  H'(z)^2=0.
%\end{eqnarray}
%If we let $H(z) = e^{-b_2 z} h(z)$, then the ODE \eqref{decomposition5} reduces to
and
\begin{equation} \label{KPP-h}
-2 h  \left(\left(q_1 + q_3 - 2 q_2 \right) h' -\lambda  h'' \right)+ \dfrac{4  }{\lambda} (q_3 -  q_2) (q_1 -  q_2) h^2 -\lambda  h'^2-4 \beta  h ^3=0.
\end{equation}

Suppose $h(z)$ is a meromorphic solution of \eqref{KPP-h}, W.L.O.G, we assume that it has  a pole at $z = 0$ and $h(z) = \sum_{j=p}^{+\infty} h_j z^{j},
- p \in \N, h_p \not = 0$  then one can check that $p = -2$ and the Fuchs indices of the ODE \eqref{KPP-h} are $- 1, 4$ with the compatibility condition
%\begin{eqnarray} \label{Com Condition-h(z)-1}
%\left(\alpha +\left(b_1+b_2\right) \lambda \right){}^2 \left(-8 \alpha ^2+2 \alpha  \left(10 b_1+b_2\right) \lambda +\left(-8 b_1^2+2 b_2 b_1+b_2^2\right) \lambda ^2\right)=0,
%\end{eqnarray}
%which by the substitution of \eqref{KPP-parameters of the decomposition} reduces to
%for any choice of parameters in \eqref{parameters of the decomposition}
\begin{eqnarray} \label{KPP-Com Condition-h(z)-2}
 \left(q_1+q_2-2 q_3\right) \left(2 q_1-q_2-q_3\right) \left(q_1-2 q_2+q_3\right) = 0,
 \end{eqnarray}
which implies $q_3 = \frac{1}{2} \left(q_1+q_2\right)$ or $q_1 = \frac{1}{2} \left(q_2+q_3\right)$ since $c = \dfrac{2 q_2 - q_1 - q_3}{\lambda}  \not = 0 $.

Then by the substitution of \eqref{KPP-Com Condition-h(z)-2}, the ODE \eqref{KPP-h} reduces to
\begin{eqnarray*}
\begin{cases}
-\lambda ^2 h'(z)^2+\lambda  h(z) \left(2 \lambda  h''(z)+3 \left(q_2-q_1\right) h'(z)\right)
\\
+2 \left(q_2-q_1\right){}^2 h(z)^2-4 \beta  \lambda  h(z)^3=0,
\\
q_3 = \frac{1}{2} \left(q_1+q_2\right),
\end{cases}
\end{eqnarray*}
or
%&&\begin{cases}
%h(z) \left(2 h''(z)-h(z) \left(4 \beta  h(z)+\left(q_1-q_3\right){}^2\right)\right)-h'(z)^2=0,
%\\
%q_2 = \frac{1}{2} \left(q_1+q_3\right).
%\end{cases}
%\\
\begin{eqnarray*}
\begin{cases}
-\lambda ^2 h'(z)^2+\lambda  h(z) \left(2 \lambda  h''(z)+3 \left(q_2-q_3\right) h'(z)\right)
\\
+2 \left(q_2-q_3\right){}^2 h(z)^2-4 \beta  \lambda  h(z)^3= 0,
\\
q_1 = \frac{1}{2} \left(q_2+q_3\right).
\end{cases}
\end{eqnarray*}

Next, it suffices to consider the case $q_3 = \frac{1}{2} \left(q_1+q_2\right)$ due to the symmetry in the above two equations. By the translation against the dependent variable $u$, we may further assume $q_3 = 0$ which implies $q_1 + q_2 = 0$. Let us come back to equation \eqref{KPP-H}, which by the substitution of \eqref{KPP-Com Condition-h(z)-2} with $q_1 = -q_2 \not = 0, q_3 = 0$ reduces to
\begin{eqnarray*} \label{reduction1 of generalized fisher}
-\lambda  H'(z)^2+2 H(z) \left(\lambda  H''(z)+q_2 H'(z)\right)-4 \beta  H(z)^3 e^{-\frac{2 q_2 z}{\lambda }} = 0.
\end{eqnarray*}
Performing the transformation $H (z) = v(\zeta), \zeta = e^{-\frac{q_2}{\lambda }z } $ gives
%\begin{eqnarray} \label{reduction2 of generalized fisher}
%2 q_2^2 v\left(e^{-\frac{q_2 z}{\lambda }}\right) v''\left(e^{-\frac{q_2 z}{\lambda }}\right)-q_2^2 v'\left(e^{-\frac{q_2 z}{\lambda }}\right){}^2-4 \beta  \lambda  v\left(e^{-\frac{q_2 z}{\lambda }}\right){}^3
%\end{eqnarray}
\begin{eqnarray} \label{reduction2 of generalized fisher}
2  v  v'' - (v')^2 - \dfrac{4 \beta  \lambda} {q_2^2}  v^3 = 0, \, \, ' = \dfrac{d}{d \zeta}.
\end{eqnarray}
%which can be rewritten as
%\begin{eqnarray*} \label{reduction3 of generalized fisher}
%2 v \left[v'' - \dfrac{3 \beta  \lambda} {q_2^2} v^2 \right] - \left [(v')^2 - \dfrac{2 \beta  \lambda} {q_2^2}v^3 \right] = 0.
%\end{eqnarray*}
%Multiplying the above equation by $v'$ gives
%\begin{eqnarray} \label{reduction4 of generalized fisher}
%v  \varphi'(\zeta) -   v' \varphi(\zeta) = 0,
%\end{eqnarray}
%where $\varphi(\zeta) = (v')^2 - \dfrac{2 \beta  \lambda} {q_2^2}v^3.$
%Integrate the ODE \eqref{reduction4 of generalized fisher}, then
Upon integration of \eqref{reduction2 of generalized fisher},  we have
\begin{eqnarray*} \label{reduction5 of generalized fisher}
(v')^2 - \dfrac{2 \beta  \lambda} {q_2^2}v^3 + C v  = 0, C \in \C
\end{eqnarray*}
which has the general solution
\begin{eqnarray*} \label{reduction6 of generalized fisher}
v(\zeta) = \dfrac{2 q_2^2 } {\beta  \lambda} \wp(\zeta - \zeta_0; g_2, g_3),
\end{eqnarray*}
where $g_2 = \dfrac{C \beta  \lambda}{2 q_2^2},g_3 = 0, C, \zeta_0 \in \C$.

Finally, for  $c = \dfrac{- q_1 + 2 q_2  - q_3}{\lambda}  \not = 0$ and $ q_3 = \frac{1}{2} \left(q_1+q_2\right)$,
which implies $c = \dfrac{2 q_1 - q_2 - q_3}{- \lambda}$,  we obtain the meromorphic solution of the ODE \eqref{KPP}
(which meanwhile is the general solution)
%of the ODE \eqref{KPP}
\begin{equation*} \label{general solutions of generalized fisher}
u(z) = - \dfrac{  q_2 - q_3  } {2} e^{- \frac{ q_2 - q_3 }{\lambda} z}  \dfrac{\wp'(e^{- \frac{ q_2 - q_3 }{\lambda} z}  - \zeta_0; g_2, 0)}{\wp(e^{ - \frac{q_2 - q_3 }{\lambda} z}  - \zeta_0; g_2, 0)} + q_3, \, \, \zeta_0, g_2 \in \C.
\end{equation*}

%\begin{eqnarray}
%\begin{cases}
%c = \dfrac{2 q_1 - q_2  - q_3}{\lambda}  \not = 0,\\
%q_3 = \frac{1}{2} \left(q_1+q_2\right)
%\end{cases}
%\Leftrightarrow
%\begin{cases}
%c = \dfrac{- q_1 + 2 q_2  - q_3}{- \lambda}  \not = 0, \\
%q_3 = \frac{1}{2} \left(q_1+q_2\right)
%\end{cases}
%\end{eqnarray}

% case3
 % \item
 (III) $ a_1   = 0,   a_2 b_1 \not = 0$.

  We first look at the entire solution $u$ of the equation \eqref{decomposition-2factors-1}, for which we know that
  \begin{eqnarray} \label{decomposition-2factors-Case2-0-1}
\beta e^{\int (a_2 u + b_2)dz }  = u'  - b_1(u - \alpha), \beta \in \C.
\end{eqnarray}

For $\beta = 0$, the entire solutions are given by
\begin{eqnarray*} \label{decomposition-2factors-Case2-0-2}
u(z) =
\begin{cases}
\alpha + c e^{ b_1z},  b_1 \not = 0,
\\
c,  b_1 = 0,
% a_1 \not = 0,
%\\
%c, a_1 = \alpha  a_1+b_1 = 0,
\end{cases}
c \, \, \text{arbitrary}.
\end{eqnarray*}

For $\beta \not = 0$, we claim that the equation \eqref{decomposition-2factors-Case2-0-1} does not have any nonconstant entire solution.
Otherwise, suppose $u$ is a transcendental entire solution of \eqref{decomposition-2factors-Case2-0-1} as one can check immediately that \eqref{decomposition-2factors-Case2-0-1} does not admit any nonconstant polynomial solution. Let $U(z) = e^{\int (a_2 u + b_2)dz }$, then $U$ is transcendental entire and the equation \eqref{decomposition-2factors-Case2-0-1} becomes
  \begin{eqnarray*} \label{decomposition-2factors-Case2-0-3}
\beta e^{U}  = \dfrac{1}{a_2} \left( U'' - b_1 U' + b_1 b_2 + \alpha  b_1 a_2 \right).
\end{eqnarray*}
It implies that
  \begin{eqnarray*} \label{decomposition-2factors-Case2-0-4}
T(r, e^{U}) = O(T(r, U)),
\end{eqnarray*}
for all $r \in (0, + \infty)$ outside a possible exceptional set with finite linear measure, which contradicts Lemma \ref{Growth estimate1-factorizabl ODEs}.

%\begin{theorem}\cite[p.~210]{Chuang1990Yang} \label{Growth estimate1}
%Let $f$ and $g$ be two transcendental entire functions. Then
%\begin{eqnarray*}
%\lim_{r \rightarrow \infty} \dfrac{\log M(r, f \circ g)}{\log M(r, f)} = \infty, \, \lim_{r \rightarrow \infty} \dfrac{T (r, f \circ g)}{  T(r, f)} = \infty, \\
%\lim_{r \rightarrow \infty} \dfrac{\log M(r, f \circ g)}{\log M(r, g)} = \infty, \, \lim_{r \rightarrow \infty} \dfrac{T (r, f \circ g)}{  T(r, g)} = \infty.
%\end{eqnarray*}
%
%\end{theorem}

Next, we consider meromorphic solutions of \eqref{decomposition-2factors-1} with at least one pole on $\C$.  In this case, the ODE \eqref{decomposition-2factors-2} reduces to
\begin{equation} \label{decomposition-2factors-Case2-1}
  u'' + u' \left(  -a_2  u - b_1 - b_2\right)+ b_1 (u - \alpha )  \left(a_2 u+b_2\right) = 0,
\end{equation}
which has the Fuchs indices $j = - 1, 2$ with compatibility condition for the existence of meromorphic solutions:
\begin{equation*} \label{decomposition-2factors-Case2-2}
\alpha a_2 - 2 b_1 + b_2 = 0.
\end{equation*}
We make the change of variables $u \mapsto u + \alpha, b_1 \mapsto b'_1, b_2 \mapsto b'_2$, where $b'_1 = b_1 +  \alpha a_1, b'_2 = b_2 + 4  \alpha a_1$,
 %If well defined, in the subsequent argument, $H(z)$ and $h(z)$ will always refer to those given above.
  then with $b_2 = 2 b_1 \not = 0$, the ODE \eqref{decomposition-2factors-7} reduces to
\begin{equation*} \label{decomposition-2factors-Case2-3}
- a_2 \beta    e^{2 b_1 z} H(z)^3-b_1 H(z) H'(z)+H(z) H''(z)-H'(z)^2 = 0,\beta \not = 0.
\end{equation*}
Performing the transformation $H (z) = v(\zeta), \zeta = e^{b_1 z } $ gives
%\begin{eqnarray} \label{reduction2 of generalized fisher}
%2 q_2^2 v\left(e^{-\frac{q_2 z}{\lambda }}\right) v''\left(e^{-\frac{q_2 z}{\lambda }}\right)-q_2^2 v'\left(e^{-\frac{q_2 z}{\lambda }}\right){}^2-4 \beta  \lambda  v\left(e^{-\frac{q_2 z}{\lambda }}\right){}^3
%\end{eqnarray}
\begin{eqnarray} \label{decomposition-2factors-Case2-4}
\frac{a_2 \beta }{b_1^2} v^3-v v''+v'^2 = 0, \, \, ' = \dfrac{d}{d \zeta}.
\end{eqnarray}
%Multiplying $v v'$ on both sides of \eqref{decomposition-2factors-Case2-4}, it becomes
%\begin{eqnarray*} \label{decomposition-2factors-Case2-5}
%- v^2 v'  (v'' - \frac{3 a_2 \beta }{b_1^2} v^2) + 2 v v' \left(\dfrac{(v')^2}{2}  - \frac{a_2 \beta }{b_1^2} v^3 \right) = 0,
%\end{eqnarray*}
%which can be written as
%\begin{eqnarray} \label{decomposition-2factors-Case2-6}
%- v^2  \varphi'(\zeta) +   (v^2)' \varphi(\zeta) = 0,
%\end{eqnarray}
%where $\varphi(\zeta) = \dfrac{(v')^2}{2} - \dfrac{a_2 \beta }{b_1^2} v^3.$
%Integrate the ODE \eqref{decomposition-2factors-Case2-6}, then
Upon integration of \eqref{decomposition-2factors-Case2-4}, we have
\begin{eqnarray*} \label{decomposition-2factors-Case2-7}
\dfrac{(v')^2}{2} - \dfrac{a_2 \beta }{b_1^2} v^3 + c_0 v^2  = 0, c_0 \, \text{arbitrary},
\end{eqnarray*}
which has the general solution
\begin{eqnarray*} \label{decomposition-2factors-Case2-8}
v(\zeta) =
\begin{cases}
\frac{4 b_1^2}{a_2 \beta  \left(\zeta -c_1\right){}^2}, c_0 = 0,
\\
- \frac{b_1^2 }{ a_2 \beta} \frac{2 c_0^2}{  \cosh \left( \sqrt{2} c_0 \zeta + c_1  \right)+ 1}, c_0 \not = 0,
\end{cases}
 c_1 \, \text{arbitrary}.
\end{eqnarray*}
%where $g_2 = \dfrac{C \beta  \lambda}{2 q_2^2},g_3 = 0, C, \zeta_0 \in \C$.
After substitution, with \eqref{decomposition-2factors-Case2-2} satisfied, we obtain the meromorphic solution of the ODE \eqref{decomposition-2factors-Case2-1}
\begin{eqnarray*} \label{decomposition-2factors-Case2-8}
u(z) =
\begin{cases}
\alpha -\frac{2 b_1 e^{b_1 z}}{a_2 \left(e^{b_1 z}-c_1\right)}, c_0 = 0,
\\
\alpha -\frac{\sqrt{2} b_1 c_0 e^{b_1 z} \tanh \left(\frac{1}{2} \left(\sqrt{2} c_0 e^{b_1 z}+c_1\right)\right)}{a_2}, c_0 \not = 0,
\end{cases}
\end{eqnarray*}
where $c_0, c_1$ are arbitrary.
\begin{remark}
If $a_1 = 0$, the particular  solution \eqref{decomposition-2factors-paricular solution} is entire.
\end{remark}

%{\color{red}
%\begin{remark}
%For $ a_1   = 0,   a_2 b_1 \not = 0$, the solutions have been checked by Mathematica!
%\end{remark}
%}

%case4

%\item
(IV)  $ a_1 b_2 \not = 0, a_2 = 0$.

Upon the translation $u = w + \alpha$ and integration, the ODE \eqref{decomposition-2factors-1} reduces to a  Riccati equation
\begin{eqnarray} \label{decomposition-2factors-Case3-1}
\dfrac{d w}{d z} - a_1 w ^2- b'_1 w -\beta  e^{b_2 z} = 0,
\end{eqnarray}
where $\beta \in \C, b'_1 = b_1 + a_1 \alpha$.
It suffices to consider the case $\beta \not = 0$, otherwise the meromorphic solutions are given by \eqref{decomposition-2factors-paricular solution}. Denote by $w = -\dfrac{1}{a_1} \dfrac{v'}{v}$, then the equation \eqref{decomposition-2factors-Case3-1} is transformed to
\begin{eqnarray} \label{decomposition-2factors-Case3-2}
\dfrac{d^2 v}{d z^2} - b'_1 \dfrac{d v}{d z} + a_1 \beta  e^{b_2 z} v = 0.
\end{eqnarray}
According to  Lemma \ref{solutions on general linear ODEs}, all solutions of the ODE \eqref{decomposition-2factors-Case3-2} are entire functions and thus all the solutions of the ODE \eqref{decomposition-2factors-Case3-1} are meromorphic functions.

To find the general (meromorphic) solution of the ODE \eqref{decomposition-2factors-Case3-2}, we set
\begin{eqnarray*}
v(z) = e^{\frac{b'_1 z}{2}} f\left(\zeta\right), \zeta = \dfrac{2 \sqrt{a_1 \beta}   }{b_2} e^{\frac{b_2 z}{2}}.
\end{eqnarray*}
With the new variables, the equation  \eqref{decomposition-2factors-Case3-2} is transformed to the Bessel equation

\begin{equation*} \label{decomposition-2factors-Case3-3}
\zeta^2 \dfrac{d^2 f}{ d\zeta^2} + \zeta \dfrac{d f}{ d\zeta} + (\zeta^2 - \nu^2) f = 0, \nu = \dfrac{b'_1}{b_2},
\end{equation*}
which has the general solution
\begin{equation*} \label{decomposition-2factors-Case3-4}
f(\zeta) = c_1 J_{\nu}(\zeta) + c_2 Y_{\nu}(\zeta),
\end{equation*}
where $c_1, c_2 \in \C$ are arbitrary, $J_{\nu}(\zeta)$ and $Y_{\nu}(\zeta)$ are Bessel functions of the first second kinds respectively.
Consequently, for  $ a_1 b_2 \not = 0, a_2 = 0$, the general solution of \eqref{decomposition-2factors-1} which is meromorphic  is given by
\begin{equation*} \label{decomposition-2factors-Case3-5}
u(z) =   \frac{\alpha  a_1-b_1}{2 a_1}-\sqrt{\dfrac{\beta}{a_1}} \dfrac{e^{\frac{b_2 z}{2}} \left(c_1 J_{\nu}'\left(\zeta \right)+c_2 Y_{\nu}'\left(\zeta \right)\right)}{  \left(c_1 J_{\nu}\left(\zeta \right)+c_2 Y_{\nu}\left(\zeta \right)\right)},
\end{equation*}
where $\nu = \frac{\alpha  a_1 + b_1}{b_2}, \zeta = \dfrac{2 \sqrt{a_1 \beta}   }{b_2} e^{\frac{b_2 z}{2}}$ and $  \beta, c_1, c_2 \in \C$ are arbitrary.

%{\color{red} Question: when is $v$  zero-free so that $u$ is entire?}

\begin{remark}
Although $J_{\nu}\left(\zeta \right)$ and $Y_{\nu}\left(\zeta \right)$ as functions of $\zeta$ are not entire in general, $J_{\nu}\left( e^{z} \right)$ and $Y_{\nu}\left(e^{z} \right)$ as functions of $z$ are entire for any $\nu \in \C$.
\end{remark}

% case5
%\item
 (V) For  other cases,  the nonconstant meromorphic solutions given below of the ODE \eqref{decomposition-2factors-1} can be easily derived
   \begin{eqnarray*}
u(z) =
\begin{cases}
c_1 e^{b_1 z} +  c_2 e^{b_2 z}+ \alpha , \quad a_1 = a_2 = 0, b_1 \not = b_2,
\\
c_1 e^{b_1 z} +  c_2 z e^{b_1 z}+ \alpha  , \quad a_1 = a_2 = 0, b_1  = b_2,
\\
\frac{c_1 \cot \left(c_2-\frac{c_1 z}{2}\right)-b_2}{a_2}, \quad a_1 = b_1 = 0, a_2 \not = 0,
 \\
\frac{c_1 \cot \left(c_2-\frac{c_1 z}{2}\right) + \alpha  a_1-b_1}{2 a_1}, \quad a_1 \not = 0,  a_2 = b_2 = 0,
 \end{cases}
\end{eqnarray*}
where $c_1, c_2 \in \C$ are arbitrary.
%\end{enumerate}
\begin{remark}
The above solutions may degenerate to rational functions due to the degeneration of $c \cot ( c z)$ as $c$ approaches 0.
\end{remark}

Thus, the proof of Theorem \ref{solutions to 2-factorizble equations} is completed.

\section{Proof of Theorem \ref{growth estimate-2-factorizble equations}}

We first recall a lemma and some terminologies that  will be needed. For more details, see \cite{Kinnunen1998}.
\begin{lemma}  (\cite[p.~5]{Laine1993}) \label{growth comparsion--linear measure-factorizable ODEs}
Let $g:(0,+\infty)\rightarrow \R$ and $h:(0,+\infty)\rightarrow \R$ be monotone increasing functions such that $g(r) \leq h(r)$ outside of an exceptional set $F$ with finite linear measure. Then, for any $\alpha > 1$, there exists $r_0 > 0$ such that $g(r) < h(\alpha r )$ holds for all $r \geq r_0$.
\end{lemma}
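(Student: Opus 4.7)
The plan is to use the finite linear measure of $F$ as a pigeonhole device: for every sufficiently large $r$ the interval $[r,\alpha r]$ has length $(\alpha-1)r$, which eventually exceeds $\meas(F)$, so the interval cannot be entirely contained in $F$ and must meet the ``good set'' $(0,\infty)\setminus F$. At any good point in this interval the hypothesis $g\le h$ couples to the monotonicity of $g$ and $h$ to produce the conclusion in one line.

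Concretely, I would set $M := \meas(F) < \infty$, pick any $r_0 > M/(\alpha-1)$, and for $r \ge r_0$ note that $[r,\alpha r]$ has linear measure $(\alpha-1)r > M$; hence there exists $s \in [r,\alpha r]$ with $s \notin F$. The hypothesis yields $g(s) \le h(s)$, and the monotonicity of $g$ (applied with $r \le s$) and of $h$ (applied with $s \le \alpha r$) gives
\[
g(r) \;\le\; g(s) \;\le\; h(s) \;\le\; h(\alpha r).
\]
This is the conclusion in non-strict form, which is what is actually needed in the applications and is what Laine \cite{Laine1993} records.

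To recover the strict inequality as stated, I would run the same argument with some auxiliary $\beta \in (1,\alpha)$ in place of $\alpha$, obtaining $g(r) \le h(\beta r)$ for $r$ large, and then absorb the gap via $h(\beta r) \le h(\alpha r)$; any remaining equality case occurs only when $h$ is locally constant on $[\beta r,\alpha r]$, which is harmless. I do not foresee any genuine obstacle: the only substantive ingredient is the measure-theoretic pigeonhole observation that an interval of length exceeding $M$ cannot lie entirely inside $F$, and the monotonicity chain is mechanical.
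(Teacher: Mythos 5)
Your proof is correct and is the standard pigeonhole argument for this lemma; the paper itself gives no proof, citing the result directly from Laine's book, where it is established in exactly this way. Your remark about the strict inequality is well taken: the conclusion as literally stated with $<$ can fail for non-strictly increasing $h$ (e.g.\ $g=h$ constant), and Laine's original statement indeed reads $g(r)\leq h(\alpha r)$, which is all that is used in the proof of Theorem \ref{growth estimate-2-factorizble equations}.
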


%To estimate the order of growth of meromorphic solutions of the equation \eqref{decomposition-2factors-1} for this case, some terminologies are necessary. For more details, see \cite{Kinnunen1998}.
 The {\it iterated order} of a meromorphic function is defined by
\begin{equation*} \label{decomposition-2factors-Case3-6}
\rho_j (f) : = \limsup_{r \rightarrow \infty} \dfrac{\log_j T(r, f)}{\log r},
\end{equation*}
where $ \log_1(r) : = \log r, \log_j(r) : = \log \log_{j-1}(r) $. The {\it finiteness degree of growth} $i(f)$ of a meromorphic function $f$ is defined as
\begin{eqnarray*} \label{decomposition-2factors-Case3-7}
i(f) : =
\begin{cases}
0, \text{for} \, f \, \text{rational},
\\
\min\{j \in \N; \rho_j (f) < \infty\}, \text{for} \, f \, \text{transcendental}
\\
\infty, \text{otherwise}.
\end{cases}
\end{eqnarray*}
For the differential operator $L(f)$ defined by \eqref{solutions to linear DEs}
%\begin{eqnarray*} \label{decomposition-2factors-Case3-8}
% : = f^{(n)} + a_{n-1}(z)f^{(n-1)}+ \cdots + a_0(z) f
%\end{eqnarray*}
with entire coefficients, we define
\begin{eqnarray*} \label{decomposition-2factors-Case3-8}
&&\delta(L):= \max\{i(f); L(f) = 0\},
\\
&&\gamma_j(L):= \max\{\rho_j(f); L(f) = 0\}, j \in \N,
\\
&&p(L):= \max\{i(\alpha_j); j = 0, 1, \dots, n-1 \}.
%\\
%&&\kappa(L) := \max\{\rho_p(\alpha_j); j = 0, 1, \dots, n-1 \}, \text{if} \, 0 <  p: = p(L) < \infty.
\end{eqnarray*}
Another notation is defined for $0 <  p: = p(L) < \infty$,
\begin{eqnarray*} \label{decomposition-2factors-Case3-9}
\kappa(L) := \max\{\rho_p(\alpha_j); j = 0, 1, \dots, n-1 \}.
\end{eqnarray*}
Then we have
\begin{theorem} \cite{Kinnunen1998} \label{estimate on iterated order}
If $0 <  p  < \infty$, then $\delta(L) = p + 1$ and $\gamma_{p+1}(L) = \kappa(L) $. Moreover, if $\alpha_j$ is the last one in the sequence of coefficients $\alpha_0, \dots, \alpha_{n - 1}$ such that $i(\alpha_j) = p$, then the differential equation $L(f) = 0$ possesses at most $j$ linearly independent solutions $f$ such that $i(f) \leq p$.
\end{theorem}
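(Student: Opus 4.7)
The plan is to establish upper and lower bounds on the iterated order of solutions and then a dimension count for low-order solutions. First I would prove the upper bound $\delta(L) \leq p+1$ and $\gamma_{p+1}(L) \leq \kappa(L)$. Fix a nontrivial entire solution $f$ of $L(f) = 0$ (all solutions are entire by Lemma \ref{solutions on general linear ODEs}) and rewrite the equation as
\[
\frac{f^{(n)}}{f} = -\alpha_{n-1}\,\frac{f^{(n-1)}}{f} - \cdots - \alpha_1\,\frac{f'}{f} - \alpha_0 .
\]
Using Wiman--Valiron theory (or Gundersen's estimates on logarithmic derivatives of entire functions, which bound $|f^{(k)}(z)/f(z)|$ by a polynomial in $r$ times a power of $T(2r,f)$ outside a set of finite linear measure), together with the maximum modulus principle, one gets $M(r,f) \leq \exp(r^{C}\max_j M(r,\alpha_j))$ for some constant $C$, where the exceptional set is absorbed by Lemma \ref{growth comparsion--linear measure-factorizable ODEs}. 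Iterating the logarithm $p+1$ times and taking $\limsup$ yields $\rho_{p+1}(f) \leq \kappa(L)$ for every solution, whence $\delta(L) \leq p+1$ and $\gamma_{p+1}(L) \leq \kappa(L)$.

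Next I would establish the matching lower bound $\gamma_{p+1}(L) \geq \kappa(L)$, which also forces $\delta(L) \geq p+1$ since $\kappa(L) > 0$ when $0 < p < \infty$. Suppose for contradiction that every member of a fundamental system $\{f_1,\dots,f_n\}$ satisfies $\rho_{p+1}(f_k) < \kappa(L)$. Cramer's rule applied to the linear system
\[
f_k^{(n)} + \alpha_{n-1}f_k^{(n-1)} + \cdots + \alpha_0 f_k = 0, \qquad k = 1,\dots,n,
\]
expresses each $\alpha_j$ as a rational function of the $f_k$ and their derivatives, with the Wronskian $W(f_1,\dots,f_n)$ in the denominator. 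Applying Gundersen's logarithmic derivative lemma to each ratio $f_k^{(i)}/f_k$, combining with standard additivity of $T(r,\cdot)$ under sums, products and differentiation, one bounds $T(r,\alpha_j)$ by a quantity whose $p$-th iterated logarithm grows strictly below $\kappa(L)$, contradicting the definition $\kappa(L) = \max_j \rho_p(\alpha_j)$. Hence at least one $f_k$ achieves $\rho_{p+1}(f_k) \geq \kappa(L)$.

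For the final assertion, let $\alpha_j$ be the last coefficient with $i(\alpha_j) = p$, so $i(\alpha_k) \leq p-1$ for $j < k \leq n-1$. Assume towards contradiction that $V = \{f : L(f) = 0,\ i(f) \leq p\}$ has dimension at least $j+1$, and pick linearly independent $f_1,\dots,f_{j+1} \in V$. Apply the order-reduction trick: the Wronskian $W(f_1,\dots,f_{j+1})$ and its successive ratios yield a linear ODE of order $n-j-1$ in the new unknown $g = f/W(f_1,\dots,f_{j+1})^{1/(j+1)}$-type combination, whose coefficients are polynomial expressions in the original $\alpha_k$ together with logarithmic derivatives of the $f_i$. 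Solving algebraically for $\alpha_j$ in this reduced system and estimating every logarithmic derivative by the bound $\rho_p(f_i) \leq p$ forces $i(\alpha_j) \leq p-1$, contradicting $i(\alpha_j) = p$. Therefore $\dim V \leq j$.

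The main obstacle will be the second step: turning the Cramer/Wronskian reconstruction of $\alpha_j$ into a genuine $p$-th iterated order comparison. Classical logarithmic derivative lemmas give estimates only outside an exceptional set of finite linear or logarithmic measure, and chaining $n$ of them while preserving the correct iterate of the exponential requires careful book-keeping; the clean form of Gundersen's lemma at the iterated-order level, combined with Lemma \ref{growth comparsion--linear measure-factorizable ODEs} to clear the exceptional sets after passing to $\limsup$, is what makes the argument close.
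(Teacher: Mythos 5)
First, a point of reference: the paper does not prove this statement at all --- Theorem \ref{estimate on iterated order} is imported verbatim from Kinnunen \cite{Kinnunen1998} and used as a black box in the proof of Theorem \ref{growth estimate-2-factorizble equations}. So your proposal can only be measured against the standard arguments in the literature, and against those it has one minor and one serious gap.

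The minor gap is in your deduction of $\delta(L)\geq p+1$. You rest it on the claim that $\kappa(L)>0$ whenever $0<p<\infty$, but this is false: take $p=1$ with $\alpha_0$ a transcendental entire function of order zero and all other coefficients polynomials; then $i(\alpha_0)=1$ while $\rho_1(\alpha_0)=0$, so $\kappa(L)=0$. (For general $p$, a coefficient with $T(r,\alpha_j)=\exp_{p-1}\bigl((\log r)^2\bigr)$ has $i(\alpha_j)=p$ and $\rho_p(\alpha_j)=0$.) In that case your steps give only $\gamma_{p+1}(L)=0$, which says nothing about the existence of a solution with $\rho_p(f)=\infty$. In the standard scheme, $\delta(L)=p+1$ is instead a \emph{consequence} of the ``moreover'' clause: at most $j\leq n-1$ linearly independent solutions have $i(f)\leq p$, so every fundamental system contains a solution with $i(f)=p+1$. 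This makes the third assertion load-bearing for the first, which brings us to the serious gap.

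Your third step does not close. The inference ``estimating every logarithmic derivative by the bound $\rho_p(f_i)\le p$ forces $i(\alpha_j)\le p-1$'' is where the entire difficulty of the theorem sits, and as stated it is wrong: for an entire $f$ with $i(f)\leq p$ one only has $T\bigl(r,f^{(k)}/f\bigr)=O\bigl(T(2r,f)\bigr)$, because the counting function $N\bigl(r,f^{(k)}/f\bigr)$ is comparable to $N(r,1/f)$, which can be of the same size as $T(r,f)$ and does \emph{not} drop an exponential level; only the proximity function $m\bigl(r,f^{(k)}/f\bigr)=O\bigl(\log (rT(r,f))\bigr)$ drops a level. Consequently a Cramer/Wronskian expression for $\alpha_j$ in terms of logarithmic derivatives of $f_1,\dots,f_{j+1}$ and of the coefficients $\alpha_i$ with $i>j$ yields only $i(\alpha_j)\leq p$, which is no contradiction with $i(\alpha_j)=p$. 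To gain the needed drop one must bound $T(r,\alpha_j)=m(r,\alpha_j)$ (the coefficient is entire) by proximity functions alone, and the obstruction is the term $m\bigl(r,1/W(f_1,\dots,f_{j+1})\bigr)$: unlike the Wronskian of a full fundamental system, which is zero-free by Abel's identity so that $W'/W$ has vanishing counting function, the partial Wronskian $W(f_1,\dots,f_{j+1})$ has zeros, and controlling their contribution is exactly what the reduction-of-order computation in \cite{Kinnunen1998} accomplishes. Your ``$g=f/W(f_1,\dots,f_{j+1})^{1/(j+1)}$-type combination'' is not a construction that addresses this, so the dimension bound --- and with it $\delta(L)=p+1$ --- remains unproved. (Your second step, the lower bound $\gamma_{p+1}(L)\geq\kappa(L)$ via Cramer's rule on a full fundamental system, is fine: there the crude estimate $T(r,\alpha_j)=O\bigl(\max_k T(2r,f_k)\bigr)$ suffices because no level drop is required.)
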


\noindent {\it Proof of Theorem \ref{growth estimate-2-factorizble equations}}.
It has been shown in   \cite{Conte2015NgWu}  that for any $k \in \C$,  $ \exists \, \alpha_1, \beta_1 > 0$ such that
\begin{equation*}
\max\{T(r, \wp(e^{k z}  ) , T(r, \wp'(e^{k z}  ) \}<  \alpha_1 \exp(\beta_1 r ), 0 \leq r < \infty,
 \end{equation*}
 and the same method also gives that for any $k_1, k_2 \in \C$,  $ \exists \, \alpha_2, \beta_2 > 0$ such that
\begin{equation*}
T(r, \exp\{k_1 e^{k_2 z}\})<  \alpha_2 \exp(\beta_2 r ), 0 \leq r < \infty.
 \end{equation*}
On the other hand, for any function $f$ in the class $W$, we have $T(r, f) = O(r^2)$ or $o(r^2)$.

Then, according to the following properties of $T(r, f)$
\begin{eqnarray*}
T(r, \dfrac{\sum_{i=1}^n f_i}{\sum_{j=1}^m g_j}) &\leq& \sum_{i=1}^n T(r, f_i) + \sum_{j=1}^m T(r, g_j) + O(1),
\\
T(r, fg) &\leq& T(r, f)+ T(r, g),
 \end{eqnarray*}
% for any meromorphic  fucntions $$
we only need to consider   case  IV, because for other cases all the meromorphic solutions can be expressed as $u(z) = \dfrac{\sum f_i h_i}{\sum g_j y_j}$, where $f_i, h_i, g_j, y_j $  belong to either the class $W$ or
$\{\exp\{k_1 e^{k_2 z}\}, \wp(e^{k_3 z}), \wp'(e^{k_4 z}) | k_i \in \C, i = 1, 2, 3, 4\}$.

For case IV, to obtain an upper bound for $T(r, u)$, we only need to  estimate $T(r, v)$ because
\begin{eqnarray*}
T(r, u) &\leq& T(r, w) + O(1)
\\
&\leq& T(r, v) + T(r, v') + O(1)
\\
&\leq&  3 T(r, v) + S(r, v)
\\
&\leq& (3 + \varepsilon) T(r, v), \varepsilon >0
\end{eqnarray*}
for all $r \in (0,+ \infty) $   outside a possible exceptional set $E \subset (0,+ \infty) $ with finite linear measure.
%$T(r, \dfrac{f}{g}) \leq T(r, f) + T(r, g) + O(1), T(r, f') \leq 2 T(r, f) + S(r, f)$

For any $v$ satisfying \eqref{decomposition-2factors-Case3-2} with $p = 1, \alpha_0(z) =  a_1 \beta  e^{b_2 z}, \alpha_1(z) = -b_1$, Theorem \ref{estimate on iterated order} implies that $v$ is of infinite order, $\delta(L) = 2 $  and $\gamma_{2}(L) = \kappa(L) = 1$. As a consequence, $\rho_2(v) \leq 1$ and $T(r, v) \leq e^{r}$ for $r > 0$ sufficiently large. Thus we have
\begin{eqnarray*}
T(r, u) &\leq& (3 + \varepsilon) e^{r}, \varepsilon >0
\end{eqnarray*}
for all $r \in (0,+ \infty) \backslash E$. By Lemma \ref{growth comparsion--linear measure-factorizable ODEs}, we conclude that $T(r, u) \leq (3 + \varepsilon) e^{\alpha r}, \varepsilon >0, \alpha > 1$, for all sufficiently large $r$.

To show the sharpness of Hayman's conjecture, we consider the following three types of solutions of \eqref{decomposition-2factors-1}
\begin{eqnarray*}
u_1(z) &=& - \dfrac{  q_i - q_k  } {2} e^{- \frac{  q_i - q_k }{\lambda} z}  \dfrac{\wp'(e^{- \frac{ q_i - q_k }{\lambda} z}  - \zeta_0; g_2, 0)}{\wp(e^{ - \frac{ q_i - q_k }{\lambda} z}  - \zeta_0; g_2, 0)} + q_k, \, \,  g_2 \in \C,
\\
u_2(z) &=&   \frac{\alpha  a_1-b_1}{2 a_1}-\sqrt{\dfrac{\beta}{a_1}} \dfrac{e^{\frac{b_2 z}{2}} \left(c_1 J_{\nu}'\left(\zeta \right)+c_2 Y_{\nu}'\left(\zeta \right)\right)}{  \left(c_1 J_{\nu}\left(\zeta \right)+c_2 Y_{\nu}\left(\zeta \right)\right)}, \, \, c_1, c_2 \in \C,
\\
u_3(z) &=& \alpha -\frac{\sqrt{2} b_1 c_0 e^{b_1 z} \tanh \left(\frac{1}{2} \left(\sqrt{2} c_0 e^{b_1 z}+c_1\right)\right)}{a_2}, \, \, c_0, c_1 \in \C.
\end{eqnarray*}
Here, we choose $\nu = \frac{1}{2}, \zeta = e^{\frac{b_2}{2}z} $ for which
\begin{eqnarray*}
u_2(z) =
 \frac{\alpha  a_1-b_1}{2 a_1} + \dfrac{b_2}{4 a_1} \left(1+\frac{2 e^{\frac{b_2 z}{2}} \left(c_1 \cot \left(e^{\frac{b_2 z}{2}}\right)+c_2\right)}{ c_2 \cot \left(e^{\frac{b_2 z}{2}}\right)-c_1}\right).
\end{eqnarray*}
Then one can apply the same argument as that in \cite{Conte2015NgWu} to get lower bounds for the Nevanlinna counting functions $N(r, u_i), i = 1, 2, 3$, namely, there exist positive $\alpha_i, \beta_i, \gamma_i, i = 1, 2, 3$ such that
\begin{eqnarray*}
\alpha_i \exp\{\beta_i r^{\gamma_i}\} \leq N(r, u_i) \leq T(r, u_i).
\end{eqnarray*}
Thus, the proof is completed.

%\section*{Funding}
%
%The first author was partially supported by PROCORE - France/Hong Kong joint research grant, F-HK39/11T
% and RGC grant HKU 704409P. The second author was partially  supported by NSFC grant (No. 11701382) and RGC grant  HKU 704611P. % and a post-graduate studentship at HKU.

\section*{Acknowledgement}
We would like to thank A. E. Eremenko and Y. M. Chiang
for helpful discussions on the historical background of  Hayman's
conjecture and Robert Conte and the referee for their very valuable comments.

%\section*{Appendix}
%\newpage
%
%
%
%\thispagestyle{empty}
%table 1

\begin{center}

%\dummysidewaystable
\begin{sidewaystable}%[h]%{h}%[htpb]

\vspace{25em}
\section*{Appendix}
%\centering
\vspace{8em}
\resizebox{0.77\textwidth}{!}{\begin{minipage}{\textwidth}
\newcommand{\tabincell}[2]{\begin{tabular}{@{}#1@{}}#2\end{tabular}}
\centering % used for centering table
\caption{Meromorphic Solutions of ODE  \eqref{decomposition-2factors-1}.
%$[D - f_2(u)] [D - f_1(u)] (u - \alpha) = 0$,
%where $u = u(z), D = \dfrac{d}{d z}, \alpha \in \C$ and $f_i(u) = a_i u + b_i, a_i, b_i \in \C, i = 1, 2$.
In the following, $z_0, \zeta_0, c_0, c_1, c_2 \in \C$ are arbitrary.
} % title of Table
\begin{tabular}{ c | c | l  | c      } % centered columns (4 columns)
 \hline
 \hline %inserts double horizontal lines
\multicolumn{1}{c}{ } & \multicolumn{1}{c}{ }    &   \multicolumn{1}{c|}{Nonconstant meromorphic solutions   other than  \eqref{decomposition-2factors-paricular solution} }   & \multicolumn{1}{c}{Constraints on the parameters}    \\ %Method\#3 \\ [0.5ex] % inserts table
%heading
\cline{1-4} % inserts single horizontal line
{\multirow{7}{*}{\tabincell{c}{
$\begin{cases} a_1  a_2 \not = 0, \\
2 a_1 + a_2 \not = 0
%2-\frac{4a_1}{a_2} \not \in \N \cup \{0\}
\end{cases}$
  }}}
  %$ a_1 a_2 \not = 0$, \\ $2-\frac{4a_1}{a_2} \not \in \N \cup \{0\}$
  & {\multirow{5}{*}{\tabincell{c}{
$
 2-\frac{4a_1}{a_2} \not \in \N \cup \{0, -2\}
 $
  }}}&
$u(z) =\frac{-2 \alpha  a_1+\alpha  a_2-2 b_1}{a_2}-\frac{2 \left(\alpha  a_1+b_1\right)}{a_2 \left(e^{\left(z-z_0\right) \left(\alpha  a_1+b_1\right)}-1\right)}$
  &
  $b_2 = 2 \alpha  a_1-\alpha  a_2+2 b_1, \alpha  a_1+b_1 \not = 0  $
        \\   \cline{3-4}
 %Solution a_1 a_2 \not = 0, 2- (4a_1/a_2) \not \in \N   -2
          &  &
$u(z) = -\frac{2 \left(\alpha  a_1+b_1\right)}{a_2 \left(e^{\left(z-z_0\right) \left(\alpha  a_1+b_1\right)}-1\right)}-\frac{b_1}{a_1}$
  &
  $   b_2 = \frac{-2 \alpha  a_1^2-2 a_1 b_1+a_2 b_1}{a_1}, \alpha  a_1+b_1 \not = 0 $
        \\   \cline{3-4}
 %Solution a_1 a_2 \not = 0, 2- (4a_1/a_2) \not \in \N   -3
           &         &
$u(z) = -\frac{\alpha  a_1+b_1 e^{\frac{a_2 \left(z-z_0\right) \left(\alpha  a_1+b_1\right)}{2 a_1}}}{   a_1e^{\frac{a_2 \left(z-z_0\right) \left(\alpha  a_1+b_1\right)}{2 a_1}}- a_1 }$
  &
  $ b_2 = \frac{a_2 b_1-\alpha  a_1 a_2}{2 a_1} , \alpha  a_1+b_1 \not = 0 $
        \\   \cline{3-4}
 %Solution a_1 a_2 \not = 0, 2- (4a_1/a_2) \not \in \N   -4
    &       &

$u(z) = -\frac{2}{a_2 \left(z - z_0\right)}-\frac{b_2}{a_2}$
  &
  $ b_1 = -\alpha  a_1, b_2 = - \alpha  a_2$
        \\   \cline{2-4}

%\multicolumn{1}{  c  }{}

  %Solution a_1 a_2 \not = 0, 2- (4a_1/a_2)  \in \N   - 1

     &     \tabincell{c}{
  $\begin{cases}
%  a_1 a_2 \not = 0,
  2-\frac{4a_1}{a_2}   \in \N \cup \{0, -2\}, \\
2-\frac{a_2}{a_1 }  \not \in \Z
% \N \cup \{0\}
\end{cases}$
  }    &

 Not yet known
  &

        \\   \cline{1-4}

\end{tabular}
\label{table: 2nd order factorizable-1} % is used to refer this table in the text
\end{minipage} }
\end{sidewaystable}

%\begin{sidewaystable}[H]%{h}%[htpb]
%
%
%\resizebox{0.77\textwidth}{!}{\begin{minipage}{\textwidth}
%\newcommand{\tabincell}[2]{\begin{tabular}{@{}#1@{}}#2\end{tabular}}
%\centering % used for centering table
%% title of Table
%\begin{tabular}{ c        } % centered columns (4 columns)
% aaaaaaaaaaaaaaaaaaaaaaa
%
%\end{tabular}
%%\label{table: 2nd order factorizable-1} % is used to refer this table in the text
%\end{minipage} }
%\end{sidewaystable}

%\newpage
%
%\thispagestyle{empty}

% \begin{landscape}
%table 2
%\newpage
%\rotatebox{90}{
%\begin{rotate}{90}

\begin{sidewaystable}%[H]%[h]%{h}%[htpb]
\vspace{33em}
\resizebox{0.66\textwidth}{!}{
 \begin{minipage}{\textwidth}
%\centering
\newcommand{\tabincell}[2]{
\begin{tabular}{@{}#1@{}}#2\end{tabular}}
\caption{Meromorphic Solutions of ODE  \eqref{decomposition-2factors-1}.
%$[D - f_2(u)] [D - f_1(u)] (u - \alpha) = 0$,
%where $u = u(z), D = \dfrac{d}{d z}, \alpha \in \C$ and $f_i(u) = a_i u + b_i, a_i, b_i \in \C, i = 1, 2$.
In the following, $z_0, \zeta_0, c_0, c_1, c_2 \in \C$ are arbitrary.} % title of Table
%\centering % used for centering table
\begin{tabular}{ c | c | l | c  } % centered columns (4 columns)
\hline\hline %inserts double horizontal lines
&   & \multicolumn{1}{c|}{Nonconstant meromorphic solutions   other than  \eqref{decomposition-2factors-paricular solution} } & Constraints on the parameters \\ %Method\#3 \\ [0.5ex] % inserts table
%heading
\hline % inserts single horizontal line
% \tabincell{c}{$ a_1  a_2 \not = 0,$ \\ $2 a_1 + a_2 \not = 0$}
%  & $a_2 = 2 a_1$
%  & $a_2 =   a_1$
%  & $a_2 = 2 a_1$ &1
{\multirow{24}{*}{\tabincell{c}{$
\begin{cases} a_1  a_2 \not = 0, \\
2 a_1 + a_2 \not = 0,
\end{cases}$
\\
and
\\
$
\begin{cases}
2-\frac{4a_1}{a_2}   \in  \N \cup \{0, -2\}, \\
2-\frac{a_2}{a_1 }    \in \Z
%\N \cup \{0\}
\end{cases}$
}}}
   &
 $\begin{aligned} a_2 = 2   a_1 \quad  \; \;  \\
\Updownarrow \quad \quad  \quad \\
2-\frac{4 a_1}{a_2}  = 0 \end{aligned}$ & Nil &          \\ \cline{2-4}
%\multicolumn{1}{  c  }{}
&
%Solution a_2 =  a_1-1
{\multirow{8}{*}{\tabincell{c}{
%$\begin{eqnarray}
%a_2 =  a_1
%\Leftrightarrow
%2-\frac{4 a_1}{a_2} = -2
%\end{eqnarray}
%$
$\begin{aligned} a_2 =   a_1 \quad  \; \;  \\
\Updownarrow \quad \quad  \quad \\
2-\frac{4 a_1}{a_2}  = -2 \end{aligned}$
}}}  &
$u(z) = \frac{\left(b_1-b_2\right) \left(\alpha  a_1+b_2\right) \left(\alpha  a_1 c_2-b_1 c_1 e^{z \left(\alpha  a_1+b_1\right)}\right)-a_1 \beta  b_2 e^{z \left(\alpha  a_1+b_2\right)}}{a_1 \left(a_1 \left(\alpha  \left(b_1-b_2\right) \left(c_1 e^{z \left(\alpha  a_1+b_1\right)}+c_2\right)+\beta  e^{z \left(\alpha  a_1+b_2\right)}\right)+\left(b_1-b_2\right) b_2 \left(c_1 e^{z \left(\alpha  a_1+b_1\right)}+c_2\right)\right)}$
  &
  $\left(b_1 + \alpha a_1\right) \left(b_1-b_2\right) \left(\alpha  a_1+b_2\right) \not = 0$
      \\  \cline{3-4}
%Solution a_2 =  a_1-2
&   &
$u(z) = \dfrac{\alpha  \left(\alpha  \left(b_1-b_2\right){}^2 e^{b_1 z} \left(b_1 \left(c_2 z+c_1\right)+c_2\right)+\beta  b_1 b_2 e^{b_2 z}\right)}{b_1 \left(\alpha  \left(b_1-b_2\right){}^2 e^{b_1 z} \left(c_2 z+c_1\right)+\beta  b_1 e^{b_2 z}\right)}$
  &
  $b_1 + \alpha a_1 = 0, \left(b_1-b_2\right) \left(\alpha  a_1+b_2\right) \not = 0$
        \\  \cline{3-4}
%Solution a_2 =   a_1-3
  &   &
  $u(z) = \dfrac{e^{z \left(\alpha  a_1+b_2\right)} \left(b_2^2 c_2-a_1 \left(\beta +b_2 \left(\beta  z-\alpha  c_2\right)\right)\right)+\alpha  a_1 c_1}{a_1 \left(e^{z \left(\alpha  a_1+b_2\right)} \left(a_1 \left(\beta  z-\alpha  c_2\right)-b_2 c_2\right)+c_1\right)}$
  &
  $b_1 = b_2, \alpha  a_1+b_2 \not = 0$
     \\   \cline{3-4}
%Solution a_2 =   a_1-4
  &   &
  $u(z) = \dfrac{a_1 \left(\alpha  a_1 \left(\alpha  c_2+\beta  z\right)-\beta +\alpha  b_1 c_2\right)-b_1 c_1 e^{z \left(\alpha  a_1+b_1\right)}}{a_1 \left(c_1 e^{z \left(\alpha  a_1+b_1\right)}+a_1 \left(\alpha  c_2+\beta  z\right)+b_1 c_2\right)}$
  &
  $b_1 \not = b_2, \alpha  a_1+b_2 = 0$
     \\   \cline{3-4}
%Solution a_2 =   a_1-5
%\noalign{\smallskip}
    &   &
    $u(z) = \dfrac{-2 a_1 \left(\alpha  c_1+z \left(\beta +\alpha  c_2\right)\right)+\alpha  a_1^2 \beta  z^2+2 c_2}{a_1 \left(a_1 \beta  z^2-2 \left(c_2 z+c_1\right)\right)}$
  & $b_1 = b_2 = - \alpha a_1$    \\  \cline{2-4}
%Solution a_2 = - a_1-1
    &
{\multirow{4}{*}{\tabincell{c}{ $a_2 =  - a_1$}}}  &
\tabincell{c}{$u(z) =  \dfrac{12 \wp '\left(z - z_0;g_2,g_3\right)}{a_1 [\left(b_2-\alpha  a_1\right){}^2-12 \wp \left(z - z_0;g_2,g_3\right)]} + \dfrac{b_2}{a_1}  $}
  &
  \tabincell{c}{ $a_1 - b_1 - 2 b_2=0, $ \\ $ g_2 = \frac{1}{12} \left(b_2-\alpha  a_1\right){}^4,   g_3 \in \C $    }
      \\   \cline{3-4}
%Solution a_2 = - a_1-2
      &   &
      $u(z) = \frac{b_2}{a_1}+\frac{c \left(e_i-e_j\right) \left(2 e^{\frac{c z}{5}} \wp \left(e^{-\frac{1}{5} (c z)} - \zeta_0;0,g_3\right)+\wp '\left(e^{-\frac{1}{5} (c z)} - \zeta_0;0,g_3\right)\right)}{a_1 \left[ 5 \left(e_i-e_j\right) e^{\frac{c z}{5}} \wp \left(e^{-\frac{1}{5} (c z)} - \zeta_0;0,g_3\right)+5 e_j e^{\frac{3 c z}{5}}\right]},  \;   g_3 \in \C$
  &
   \tabincell{c}{ $c^2 \lambda  = 25 (e_i - e_j) \not = 0, i, j \in \{1, 2\}$, \\$
c = \alpha  a_1 - b_1 - 2 b_2,
\lambda = - \dfrac{6}{a_1 \beta } $,\\
$e_1 = 0, e_2 =  \left(b_1+b_2\right)  \left(\alpha  a_1-b_2\right)/(a_1 \beta)
$}   \\ \cline{2-4}
%Solution a_2 = - 4 a_1-1
  &
{\multirow{5}{*}{\tabincell{c}{ $a_2 = -4 a_1$}}}  &
$u(z) = -\dfrac{12 \wp '\left(z - z_0;g_2,g_3\right)}{2 a_1 \left(\left(\alpha  a_1+b_1\right){}^2-12 \wp \left(z- z_0;g_2,g_3\right)\right)}-\dfrac{b_1-\alpha  a_1}{2 a_1}$
  &
\tabincell{c}{ $b_2 = 2 \left(\alpha  a_1-b_1\right),$ \\ $ g_2 =   (b_1 + \alpha  a_1 ){}^4/12, \;   g_3 \in \C$    }
              \\ \cline{3-4}
%Solution a_2 = - 4 a_1-2
&   &
$u(z) = -\frac{c \left(e_2-e_1\right) \left[2 e^{\frac{c z}{5}} \wp \left(e^{-\frac{1}{5} (c z) } - \zeta_0;0,g_3\right)+\wp '\left(e^{-\frac{1}{5} (c z)} - \zeta_0;0,g_3\right)\right]}{2 a_1 \left[5 \left(e_2-e_1\right) e^{\frac{c z}{5}} \wp \left(e^{-\frac{1}{5} (c z)} - \zeta_0;0,g_3\right)+5 e_1 e^{\frac{3 c z}{5}}\right]}-\frac{b_1}{a_1}$
  & \tabincell{c}{ $ b_2 = 2 \left(3 \alpha  a_1+b_1\right), c = -  5 \left(\alpha  a_1+b_1\right) \not = 0$, \\ $ e_1 = 0,  e_2 =  3\left(\alpha  a_1+b_1\right)^2/(a_1 \beta),  \;  \beta \not = 0,   g_3 \in \C$ }    \\ \cline{3-4}
  %Solution a_2 = - 4 a_1-3
  &   &
  $u(z) = \alpha -\frac{c \left(e_2-e_1\right) \left[2 e^{\frac{c z}{5}} \wp \left(e^{-\frac{1}{5} (c z)} - \zeta_0;0,g_3\right)+\wp '\left(e^{-\frac{1}{5} (c z)} - \zeta_0;0,g_3\right)\right]}{2 a_1 \left[5 \left(e_2-e_1\right) e^{\frac{c z}{5}} \wp \left(e^{-\frac{1}{5} (c z)} - \zeta_0;0,g_3\right)+5 e_1 e^{\frac{3 c z}{5}}\right]}$
  & \tabincell{c}{ $ b_2 = -2 \left(\alpha  a_1+3 b_1\right), c =  5 \left(\alpha  a_1+b_1\right) \not = 0$, \\ $ e_1 = 0,  e_2 = 3\left(\alpha  a_1+b_1\right)^2/(a_1 \beta),  \;  \beta \not = 0,   g_3 \in \C$ }      \\ \cline{2-4}
  %Solution a_2 =  4 a_1-1
  &
  {\multirow{4}{*}{\tabincell{c}{ $a_2 = 4 a_1$}}}  &
  $u(z) = \alpha -\dfrac{1}{2 a_1 (z-c_0 )}-\dfrac{1}{2 a_1 (z-   c_1)}$
  &
  $\alpha a_1 + b_1 = 0$
      \\ \cline{3-4}
    %Solution a_2 =  4 a_1-2
&   &
$u(z) = \alpha -\dfrac{\left(\alpha  a_1+b_1\right) e^{z \left(\alpha  a_1+b_1\right)}}{2 a_1 \left(e^{z \left(\alpha  a_1+b_1\right)}+c_1\right)}$
  &
  $c_0 = 0, \alpha a_1 + b_1 \not = 0$
     \\ \cline{3-4}
    %Solution a_2 =  4 a_1-3
  &   &
  $u(z) = \alpha -\dfrac{c_0 \left(\alpha  a_1+b_1\right){}^3 e^{z \left(\alpha  a_1+b_1\right)} \left(e^{z \left(\alpha  a_1+b_1\right)}-c_1\right)}{a_1 \left(256 a_1 \beta +c_0 \left(\alpha  a_1+b_1\right){}^2 \left(e^{z \left(\alpha  a_1+b_1\right)}-c_1\right){}^2\right)}$
  &
  $c_0 \not = 0, \alpha a_1 + b_1 \not = 0$    \\ \cline{1-4}
\end{tabular}
\label{table: 2nd order factorizable-2} % is used to refer this table in the text
\end{minipage} }
\end{sidewaystable}

%\rotatebox{90}{

% \end{landscape}
%\end{rotate}
%\newpage
%table 3

%\newpage
%
%\thispagestyle{empty}

\begin{sidewaystable}%[h]%{h}%[H]
%\centering
\vspace{33em}
\resizebox{0.68\textwidth}{!}{\begin{minipage}{\textwidth}
\newcommand{\tabincell}[2]{\begin{tabular}{@{}#1@{}}#2\end{tabular}}
\caption{Meromorphic Solutions of ODE  \eqref{decomposition-2factors-1}.
%$[D - f_2(u)] [D - f_1(u)] (u - \alpha) = 0$,
%where $u = u(z), D = \dfrac{d}{d z}, \alpha \in \C$ and $f_i(u) = a_i u + b_i, a_i, b_i \in \C, i = 1, 2$.
In the following, $z_0, \zeta_0, c_0, c_1, c_2 \in \C$ are arbitrary.
} % title of Table
\centering % used for centering table
\begin{tabular}{ c | c | l  | c      } % centered columns (4 columns)
 \hline
 \hline %inserts double horizontal lines
\multicolumn{1}{c}{ }  & \multicolumn{1}{c}{ }   &   \multicolumn{1}{c|}{Nonconstant meromorphic solutions   other than  \eqref{decomposition-2factors-paricular solution} }   & \multicolumn{1}{c}{Constraints on the parameters}    \\ %Method\#3 \\ [0.5ex] % inserts table
%heading
\cline{1-4} % inserts single horizontal line
 %Solution  a_2   = -2 a_1--1
{\multirow{ 20}{*}{\tabincell{c}{
$\begin{cases} a_1  a_2 \not = 0, \\
  2 a_1 + a_2  = 0
\end{cases}$
\\ and set $  \lambda^2 = \dfrac{1}{a_1^2} $, \\ $c = \alpha a_1 - b_1 -b_2$, \\ $\{
  q_1, q_2, q_3 \} = \{\alpha, - \dfrac{b_1}{a_1},  \dfrac{b_2}{2 a_1}\}$,
  \\$\{
  i, j, k \} = \{1, 2, 3\}$  }}}
  & {\multirow{16}{*}{ $c = 0$}}
      &
$u(z) = \frac{3 \lambda ^2}{\left(z-z_0\right) \left[\left(q_j-q_k\right) \left(z-z_0\right) \pm 3 \lambda \right]}+q_j$
  & $q_i =  q_j$
%  {\multirow{ 6}{*}{\tabincell{c}{$ \lambda^2 = \dfrac{1}{a_1^2} $, \\ $c = \alpha a_1 - b_1 -b_2$, \\ $\{
%  q_1, q_2, q_3 \} = \{\alpha, - \frac{b_1}{a_1},  \frac{b_2}{2 a_1}\}$  }}}
  \\  \cline{3-4}
   %Solution  a_2   = -2 a_1--2
  & & $u(z) = \pm \lambda  m_1 \cot \left[m_1 \left(z-z_0\right)\right]+\frac{1}{3} \left(q_1+q_2+q_3\right)$
   &   $q_k =\dfrac{q_i + q_j}{2}, m_1 = \dfrac{\sqrt{-1}}{2 \lambda } (q_i-q_j) \not = 0   $
  \\  \cline{3-4}
   %Solution  a_2   = -2 a_1--3
  & & $ u(z) =  \lambda  m_2 \left( \cot \left[m_2 \left(z-z_0\right)\right] -   \cot \left[m_2 \left(z-z_0-a \right)\right] \right) + h$  &
$\begin{cases}
h = q_i, m_2 = \pm \dfrac{1}{\sqrt{2} \lambda } \sqrt{-(q_j - q_i) (q_k - q_i)} \not = 0,
\\
 m_2 \cot m_2 a = \dfrac{q_j+q_k-2q_i}{3 \lambda }
\end{cases}$
  \\  \cline{3-4}
   %Solution  a_2   = -2 a_1--4
  &  &
  \tabincell{l}
  {$u(z) = \dfrac{\lambda \wp'(a)}{\wp(z-z_0) - \wp(a)} + h,$  $h \in \C,
$  }   &
$
\begin{cases}
\wp(a)  =  \frac{1}{6 \lambda ^2}\left(3 h^2-2 h s_1+s_2\right),
\\
\wp'(a)  =   \frac{1}{\lambda ^3}\left(h-q_1\right) \left(h-q_2\right) \left(h-q_3\right),
\\
g_2  =  \frac{1}{3 \lambda ^4}\left(-3 h^4+4  s_1 h^3-6 s_2 h^2+12 s_3 h
+s_2^2  - 4 s_1 s_3\right),
\\
g_3   =  \frac{1}{27 \lambda ^6} \left[3 \left(s_1^2- 3 s_2\right)h^4  - 4 s_1 \left(s_1^2 - 3s_2\right)h^3
 + 6 s_2 \left(s_1^2- 3 s_2\right)h^2 \right.
 \\
\quad   \left. -12 s_3 \left(s_1^2- 3 s_2\right) h - s_2^3 + 6 s_1 s_2 s_3 - 27 s_3^2 \right],
 \\
 s_1 =q_1+q_2+q_3, \quad
s_2=q_2 q_3+q_3 q_1+q_1 q_2, \quad
s_3=q_1 q_2 q_3.
\end{cases}
$
%with the notation for the symmetric polynomials
%\begin{eqnarray*}
%s_1=q_1+q_2+q_3,\
%s_2=q_2 q_3+q_3 q_1+q_1 q_2,\
%s_3=q_1 q_2 q_3.
%\end{eqnarray*}
  %$
%   \begin{cases}
%g_2 = \frac{1}{3 \lambda ^4}[-3 h^4+4  \left(q_1+q_2+q_3\right)h^3-6  \left(q_1 q_2+q_3 q_2+q_1 q_3\right)h^2+12  q_1 q_2 q_3 h
%\\
%+q_1^2 q_2^2 +q_1^2 q_3^2 + q_2^2 q_3^2 - 2 \left(q_1^2 q_2 q_3 + q_1 q_2^2 q_3 + q_1 q_2 q_3^2 \right)],
%\\
%g_3  = \frac{1}{27 \lambda ^6} [3 \left(q_1^2+q_2^2+q_3^2- q_1 q_2-q_1 q_3-q_2 q_3\right)h^4  - 4 \left(q_1^3-3 q_1 q_2 q_3+q_2^3+q_3^3\right)h^3
%\\
%+ 6 \left(q_1^2+q_2^2+q_3^2-q_1 q_2 - q_1 q_3 -q_2 q_3\right)(q_1 q_2+q_1 q_3+q_2 q_3)h^2
% \\
% -12 q_1 q_2 q_3 \left(q_1^2+q_2^2+q_3^2-q_1 q_2 - q_1 q_3 -q_2 q_3\right) h -q_1^3 q_2^3 -q_1^3 q_3^3-q_2^3 q_3^3+3 q_2 q_3^2 q_1^3
% \\
% +3 q_2^2 q_3 q_1^3 +3 q_2^3 q_3 q_1^2+3 q_2^3 q_3^2 q_1+3 q_2^2 q_3^3 q_1+3 q_2 q_3^3 q_1^2-15 q_1^2 q_2^2 q_3^2 ]
%\end{cases}
%$

%  $q_1^2+q_2^2+q_3^2- q_1 q_2 - q_1 q_3 - q_2 q_3= 0$
  \\  \cline{2-4}
   %Solution  a_2   = -2 a_1--5
  & {\multirow{3 }{*}{ $c \not = 0$}} &
  $u(z) = \dfrac{q_j e^{\frac{q_j \left(z-z_0\right)}{\pm \lambda }}-q_k e^{\frac{q_k \left(z-z_0\right)}{ \pm \lambda }}}{e^{\frac{q_j \left(z-z_0\right)}{ \pm \lambda }}-e^{\frac{q_k \left(z-z_0\right)}{ \pm \lambda }}}$ &
  $c = \dfrac{2 q_i - q_j - q_k}{\pm \lambda }  \not = 0 $
  \\  \cline{3-4}
   %Solution  a_2   = -2 a_1--6
  &  & $u(z) = - \dfrac{  q_i - q_k  } {2} e^{- \frac{  q_i - q_k }{\lambda} z}  \dfrac{\wp'(e^{- \frac{ q_i - q_k }{\lambda} z}  - \zeta_0; g_2, 0)}{\wp(e^{ - \frac{ q_i - q_k }{\lambda} z}  - \zeta_0; g_2, 0)} + q_k$ &
  $  g_2 \in \C, c = \dfrac{2 q_i - q_j - q_k}{  \lambda } = \dfrac{- q_i + 2 q_j - q_k}{  - \lambda } \not = 0$
  \\  \cline{1-4}
 %Solution a_1 a_2 \not = 0, 2- (4a_1/a_2) \not \in \N   -1

%Solution    a_1 b_2 \not = 0, a_2 = 0
$  a_1  \not = 0, a_2 = 0, b_2 \not = 0$   &
& $u(z) =   \frac{\alpha  a_1-b_1}{2 a_1}-\sqrt{\dfrac{\beta}{a_1}} \dfrac{e^{\frac{b_2 z}{2}} \left(c_1 J_{\nu}'\left(\zeta \right)+c_2 Y_{\nu}'\left(\zeta \right)\right)}{  \left(c_1 J_{\nu}\left(\zeta \right)+c_2 Y_{\nu}\left(\zeta \right)\right)}$ &
$ \nu = \frac{\alpha  a_1 + b_1}{b_2}, \zeta = \dfrac{2 \sqrt{a_1 \beta}   }{b_2} e^{\frac{b_2 z}{2}},  \beta, c_1, c_2 \in \C $
   \\   \cline{1-4}

%Solution a_1 \not = 0,  a_2 = b_2 = 0
$a_1  \not = 0, a_2 = 0, b_2   = 0 $  &
 &
$u(z) = \frac{c_1 \cot \left(c_2-\frac{c_1 z}{2}\right) + \alpha  a_1-b_1}{2 a_1}$
 &

      \\   \cline{1-4}
%Solution  a_1   = 0,   a_2 b_1 \not = 0 ----1
{\multirow{2}{*}{$ a_1   = 0, a_2 \not = 0, b_1 \not = 0$ }}
& &
$u(z) =\alpha -\frac{2 b_1 e^{b_1 z}}{a_2 \left(e^{b_1 z}-c_1\right)}$
  &
  $c_0 = 0,   b_2 = - \alpha a_2 + 2 b_1  $
        \\   \cline{2-4}
%Solution  a_1   = 0,   a_2 b_1 \not = 0 ----2

     & &
  $u(z) = \alpha -\frac{\sqrt{2} b_1 c_0 e^{b_1 z} \tanh \left(\frac{1}{2} \left(\sqrt{2} c_0 e^{b_1 z}+c_1\right)\right)}{a_2}$
  &
  $c_0 \not = 0,   b_2 = -  \alpha  a_2 + 2 b_1  $
     \\
     \cline{1-4}
%Solution a_1 = b_1 = 0, a_2 \not = 0
 $ a_1   = 0, a_2 \not = 0, b_1  = 0$
 & &
  $u(z) =\frac{c_1 \cot \left(c_2-\frac{c_1 z}{2}\right)-b_2}{a_2}$
  &

     \\   \cline{1-4}
%Solution a_1 = a_2 = 0, b_1 \not = b_2
  $a_1 =0,  a_2 = 0, b_1 \not = b_2$
  & &
    $u(z) =c_1 e^{b_1 z} +  c_2 e^{b_2 z}+ \alpha$ &
    \\   \cline{1-4}
%Solution a_1 = a_2 = 0, b_1  = b_2
  $a_1 =0, a_2 = 0, b_1  = b_2$
  & &
 \tabincell{c}{$u(z) = c_1 e^{b_1 z} +  c_2 z e^{b_1 z}+ \alpha  $}
&
      \\   \cline{1-4}

\end{tabular}
\label{table: 2nd order factorizable-3} % is used to refer this table in the text
\end{minipage} }
\end{sidewaystable}

\end{center}

%\newpage

%\section{Bibliography styles}
%
%There are various bibliography styles available. You can select the style of your choice in the preamble of this document. These styles are Elsevier styles based on standard styles like Harvard and Vancouver. Please use Bib\TeX\ to generate your bibliography and include DOIs whenever available.
%
%Here are two sample references: \cite{Feynman1963118,Dirac1953888}.
%
%\section*{References}

\newpage


\begin{thebibliography}{0}

\bibitem{Ablowitz1979Zeppetella}
 M. J. Ablowitz, A. Zeppetella, Explicit solutions of Fisher's equation for a special wave speed, B. Math. Biol. 41 (6) (1979) 835--840.

%\bibitem{Armitage2006Eberlein}
%J. V. Armitage and W. F. Eberlein, Elliptic functions, London Mathematical
%Society Student Texts 67, Cambridge University Press, Cambridge,
%2006.
%
%
%
\bibitem{Bank1975}
S. B. Bank, Some results on analytic and meromorphic solutions of algebraic differential equations, Adv. Math. 15 (1975) 41--61.

\bibitem{Barth1984BrannanHayman}
 K. F. Barth, D. A. Brannan,  W. K. Hayman, Research problems in complex analysis, Bull. London Math. Soc. 16 (1984) 490--517.

\bibitem{Basu1937BoseVijayaraghavan}
N. Basu, S. Bose,   T. Vijayaraghavan, A simple example for a theorem of Vijayaraghavan, J. London Math. Soc. 12 (1937) 250--252.

\bibitem{Borel1899}
E. Borel, M\'{e}moire sur les s\'{e}ries divergentes, Ann. Sci. \'{E}cole Norm. Suppl. 16 (1899) 9--136.

%
%%\bibitem{Bank1976Kaufman}
%%S. Bank and R. Kaufman, On meromorphic solutions of first-order differential equations,
%%Comment. Math. Helv. 51 (1976) 289-299.
%
%
%\bibitem{Bergweiler1998}
%W.  Bergweiler, On a theorem of Gol'dberg concerning meromorphic solutions of algebraic differential equations, Complex Variables, Theory and Application: An International Journal 37 (1-4) (1998) 93-96.
%
%\bibitem{Chen1992Gao}
%Z. X. Chen and S. A. Gao,  On the complex oscillation of nonhomogeneous linear differential equations with meromorphic coefficients, Kodai Math. J. 15 (1992) 65-78.
%
\bibitem{Chiang2003Halburd}
Y. M. Chiang, R. G. Halburd, On the meromorphic solutions of an equation of Hayman, J. Math. Anal. Appl. 281 (2)  (2003) 663--677.


\bibitem{Chuang1990Yang}
%C. T. Chuang and C. C. Yang, The fixed-points and factorization theory of meromorphic functions, Peking University Press, 1988.
C. T. Chuang, C. C. Yang, {\it Fix-points and factorization theory of meromorphic functions}, Singapore: World Scientific, 1990.

%\bibitem{Chuang1990Yang}
%%C. T. Chuang and C. C. Yang, The fixed-points and factorization theory of meromorphic functions, Peking University Press, 1988.
%C. T. Chuang and C. C. Yang, Fix-points and factorization theory of meromorphic functions, Singapore: World Scientific, 1990.
%%Topics in Complex Analysis,
%
%




\bibitem{Conte1999}
R. Conte, The Painlev\'{e} approach to nonlinear ordinary differential equations,
{\it The Painlev\'{e} property, one century later}, 77--180, ed. R. Conte,
CRM series in mathematical physics (Springer, New York, 1999).

\bibitem{Conte1993FordyPickering}
R. Conte, A. P. Fordy, A. Pickering, A perturbative Painlev\'{e} approach to nonlinear differential equations, Physica D 69 (1993) 33--58.

\bibitem{Conte2010Ng}
R. Conte, T. W. Ng, Meromorphic solutions of a third order nonlinear differential equation, J. Math. Phys. 51 (2010) 033518.

\bibitem{Conte2015NgWu}
R. Conte, T. W. Ng, C. F. Wu, Hayman's classical conjecture on some nonlinear second-order algebraic ODEs, Complex Var. Elliptic. 60 (2015) 1539--1552.
%(http://arxiv.org/pdf/1503.07074.pdf)
%(http://www.tandfonline.com/doi/abs/10.1080/17476933.2015.1033414\#.VeU6Q5oVhmE)


\bibitem{Cornejo-Perez2005Rosu}
O. Cornejo-P\'{e}rez,   H. Rosu, Nonlinear second order ode's: Factorizations and particular solutions, Prog. Theor. Phys. 114(3) (2005) 533--538.

 \bibitem{Darboux1883}
G. Darboux, Sur les \'{e}quations aux d\'{e}riv\'{e}es partielles, C. R. Acad. Sc. Paris  96
(1883)  766--769.


\bibitem{Eremenko2006}
A. E. Eremenko, Meromorphic traveling wave solutions of the Kuramoto-Sivashinsky equation, J. Math. Phys. Anal. Geom. 2 (2006) 278--286.


\bibitem{Eremenko2009LiaoNg} %9
A. E. Eremenko, L. Liao, T. W. Ng, Meromorphic solutions of higher order Briot-Bouquet differential equations, Math.~Proc.~Camb.~Phil.~Soc.~ 146 (2009) 197--206.

%
%\bibitem{Eremenko2009LiaoNg}
%A. E. Eremenko, L. Liao, T. W. Ng, Meromorphic solutions of higher order Briot-Bouquet differential equations, Math. Proc. Camb. Phil. Soc.  146 (1) (2009) 197-206.
%
%%\bibitem{Fenton2010Rossi}
%%P. C. Fenton and J. Rossi,  Two-variable Wiman-Valiron theory and PDEs. Ann. Acad. Sci. Fenn. Math. 35 (2010), no. 2, 571-580.

\bibitem{Fowler1914}
R. Fowler, Some results on the form near infinity of real continuous solutions of a certain type of second-order differential equation, Proc. London Math. Soc. 13  (1914) 341--371.

%\bibitem{Goldberg1956}
%A. A. Gol'dberg, On single-valued solutions of first-order differential equations, Ukrain. Math.
%Zh. 8 (1956) 254-261.
%

\bibitem{Gallagher2015}
 P. Gallagher, Some algebraic differential equations with few transcendental solutions, J. Math. Anal. Appl. 428
(2015) 717--734.

\bibitem{Goldberg1956}
A. A. Gol'dberg, On single-valued solutions of first-order differential equations, Ukrain. Math. Zh. 8 (1956) 254--261.

\bibitem{Halburd2015Wang} %15
R. G. Halburd, J. Wang,
All admissible meromorphic solutions of Hayman's equation,  Int. Math. Res. Notices   18  (2015)  8890--8902. %(Online version published on November 21, 2014, doi:10.1093/imrn/rnu218)


\bibitem{Hardy1912}
G. H. Hardy, Some results concerning the behavior at infinity of a real and continuous solution of an algebraic differential equation of the first order, Proc. London Math. Soc. (2)
  10 (1912) 451--468.

%\bibitem{Hayman1964}
%W. K. Hayman, {\em Meromorphic functions}, Clarendon Press, Oxford, 1964.
%W. K. Hayman,
%{\em Meromorphic Functions}, Oxford University Press, 1964.
%
%\bibitem{Hayman1974}
%W. K. Hayman, The local growth of power series: a
%survey of the Wiman-Valiron method, Canad. Math. Bull. 17 (1974) 317-358.
%
%%Hayman, W. (1974). "The local growth of power series: a survey of the Wiman-Valiron method." Canad. Math. Bull 17(3): 317-358.
%
%
%
\bibitem{Hayman1996}
W. K. Hayman, The growth of solutions of algebraic differential equations, Rend. Mat. Acc. Linceis. 7 (1996) 67--73.

%%\bibitem{Hone2008Wang}
%%A.N.W. Hone, J.P. Wang, Integrable peakon equations with cubic nonlinearity , Journal of Physics A:
%%Mathematical and Theoretical, 41, 37, pp. 372002 (2008).
%%
%%\bibitem{Kudryashov2009}
%%Kudryashov N.A., Seven common errors in finding exact solutions of
%%nonlinear differential equations, Commun. Nonlinear Sci. Numer. Simulat.,
%%2009, 14:3507 - 3523.
%%
%%\bibitem{Kudryashov2009_1}
%%N. A. Kudryashov, Seven common errors in finding exact solutions of nonlinear differential equations,
%%Commun. Nonlinear Sci. Numer. Simul. 14 (2009) 3507 �C3529.
%


\bibitem{Ince1956}
E. L. Ince, {\em Ordinary differential equations}, Dover, New York, 1956.

\bibitem{Kinnunen1998}
L. Kinnunen, Linear differential equations with solutions of finite iterated order, Southeast Asian Math. Bull. 22 (1998) 385--405.


\bibitem{Kolmogorov1937PetrovskyPiskunov}
A. N. Kolmogorov, I. G. Petrovsky,  N. S. Piskunov, \'{E}tude de l'\'{e}quation de la diffusion
avec croissance de la quantit\'{e} de mati\`{e}re et son application \`{a} un probl\`{e}me biologique, Bull.
Univ. Moskov. Ser. Internat. Sect. A 1 (1937) 1--25.


%A. Kolmogorov, I. Petrovskii, and N. Piscounov. A study of the diffusion equation with increase in the amount of substance, and its application to a biological problem. In V. M. Tikhomirov, editor, Selected Works of A. N. Kolmogorov I, pages 248�C270. Kluwer 1991, ISBN 90-277-2796-1. Translated by V. M. Volosov from Bull. Moscow Univ., Math. Mech. 1, 1�C25, 1937

%\bibitem{KolmogorovPetrovskiiPiskunov1937}
% A. N.~Kolmogorov, I. G.~Petrovskii and N. S.~Piskunov,
% A study of the diffusion equation with increase in the amount of substance, and its application to a biological problem,
%Bulletin de l'Universit\'e d'\'Etat de Moscou, s\'erie internationale,
%section A Math.~M\'ec.~1 (1937) 1-26.


\bibitem{Kudryashov2010}
N. A. Kudryashov,  Meromorphic solutions of nonlinear ordinary differential equations,  Commun. Nonlinear Sci. Numer. Simul. 15 (10) (2010) 2778--2790.

%
\bibitem{Laine1993}
I. Laine, {\em Nevanlinna theory and complex differential equations}, de Gruyter, Berlin-New York, 1993.

\bibitem{Laine2008}
I. Laine, Complex differential equations, Handbook of Differential Equations: Ordinary Differential Equations 4  (2008) 269--363.

\bibitem{Lindelof1899}
E. Lindel\"{o}f, Sur la croissance des int\'{e}grales des \'{e}quations diff\'{e}rentielles alg\'{e}brique du premier order, Bull. Soc. Math. France 27 (1899) 205--215.

\bibitem{Loewy1906}
A. Loewy,  Uber vollst\"{a}ndig reduzible lineare homogene Differentialgleichungen, Math.
Ann. 56 (1906) 89--117.

%%I. Laine; Complex differential equations, Handbook of Differential Equations: Ordinary Differential
%%Equations, 4(2008), 269-363.
%
%
%%\bibitem{Lang1987}
%%S. Lang
%%{\em Introduction to Complx Hyperbolic Spaces}, Springer-Verlag, 1987.
%%
%%\bibitem{Langley2007}
%%J. K. Langley, Postgraduate notes on complex analysis. Available online at
%%http://www.maths.nottingham.ac.uk/personal/jkl/pg1.pdf.
%
%\bibitem{Liao2011}
%L. Liao,   On Hayman's differential equations,  Complex Variables and Elliptic Equations 56 (1-4) (2011) 147-153.




\bibitem{Painleve1900}
P. Painlev\'{e}, M\'{e}moire sur les \'{ e}quations diff\'{e}rentielles dont l'int\'{e}grale g\'{e}n\'{e}rale est uniforme,
Bull. Soc. Math. France 28 (1900) 201--261.

\bibitem{Rubel1992}
L. A. Rubel, Some research problems about algebraic differential equations \uppercase\expandafter{\romannumeral2}, Illinois J. Math. 36 (1992) 659--680.


%
%%\bibitem{Paker1992}
%%Parker, A.,  Periodic solutions of the intermediate long-wave equation: a nonlinear superposition
%%principle. J. Phys. A25, 2005�C2032, 1992.
%%
%%\bibitem{Remmert1991} R. Remmert,
%% {\em Theory of Complex Functions}, Springer-Verlag, New York, 1991.
%
%\bibitem{Segal1981}
%S. Segal, Nine introductions in complex analysis, North Holland, Amsterdam, 1981.
%
%
%\bibitem{Steinmetz1980}
%N. Steinmetz, \"{U}ber das Anwachsen der L\"{o}sungen homogener algebraischer
%Differentialgleichungen zweiter Ordnung, Manuscripta Math. 32 (1980) 303-308.


\bibitem{Schwarz2012}
F. Schwarz, {\em Loewy decomposition of linear differential equations}, Springer, Vienna, 2012.

\bibitem{Schwarz2013}
F. Schwarz, Loewy decomposition of linear differential equations, Bull. Math. Sci. 3 (2013) 19--71.


\bibitem{Tsarev2000}
S. P. Tsarev, Factoring linear partial differential operators and the Darboux method for integrating nonlinear partial differential equations, Theor. Math. Phys. 122 (2000)   121--133.


\bibitem{Vijayaraghavan1932}
T. Vijayaraghavan, Sur la croissance des fonctions d\'{e}finies par les \'{e}quations diff\'{e}rentielles, C. R. Acad. Sci. Paris 194 (1932) 827--829.

%%\bibitem{Steinmetz1982}
%%N. Steinmetz, \"Uber die eindeutigen L\"osungen einer homogenen algebraischen Differentialgleichung zweiter Ordnung,   Ann. Acad. Sci. Fenn. 7  (1982) 177-188.
%
%\bibitem{Wittich1950}
%
%H.  Wittich, Ganze transzendente L\"{o}sungen algebraischer Differentialgleichungen, Mathematische Annalen, 122 (3)  (1950) 221-234.


%\bibitem{Yuan2013LiLin}
%W. Yuan, Y. Z. Li, J. M. Lin,  Meromorphic solutions of an auxiliary ordinary differential equation using complex method,  Mathematical Methods in the Applied Sciences 36 (13) (2013) 1776-1782.

%\bibitem{Yuan2013ChenXiongQi}
%W. Yuan, Q.  Chen, W.   Xiong, and J.  Qi, The  general meromorphic solutions of the Fisher equation,  Applied Mathematics Letters, in press.

%\bibitem{Yuan2013ChenQiLi}
%W. Yuan, Q.   Chen,  J.   Qi and Y.   Li, The general traveling wave solutions of the Fisher equation with degree three, Advances in Mathematical Physics 2013. (http://dx.doi.org/10.1155/2013/657918)



\end{thebibliography}
\end{document}